\newcommand{\norm}[1]{\left| {#1} \right|}
\newcommand{\dnorm}[1]{\norm{\norm{{#1}}}}
\newcommand{\ba}{\begin{eqnarray}}
\newcommand{\na}{\end{eqnarray}}
\newcommand{\ban}{\begin{eqnarray*}}
\newcommand{\nan}{\end{eqnarray*}}
\newcommand{\N}{{\mathbb N}}
\newcommand{\R}{{\mathbb R}}
\renewcommand{\a}{\alpha}
\renewcommand{\b}{\beta}
\newcommand{\eps}{\epsilon}
\renewcommand{\d}{\delta}
\newcommand{\p}{\partial}
\renewcommand{\t}{\tau}
\renewcommand{\th}{\theta}
\newcommand{\z}{\zeta}
\newcommand{\ra}{\rightarrow}
\newcommand{\hP}{\hat{P}}
\newcommand{\dhP}{\dot{\hat{P}}}
\newtheorem{theorem}{Theorem}
\newtheorem{lemma}[theorem]{Lemma}
\newtheorem{proposition}[theorem]{Proposition}
\newtheorem{definition}[theorem]{Definition}
\newtheorem{assumption}{Assumption}
\newif\ifcomments
\newcommand\todos[1]{\textcolor{blue}{#1}}
\newcommand{\etc}[1]{~\textit{etc.}}
\newcommand{\etal}[1]{~\textit{et~al.}}
\newcommand\todos[1]{}%{\textcolor{red}{#1}}
\newcommand{\etc}[1]{}%{~\textit{etc.}}
\newcommand{\etal}[1]{}%{~\textit{et~al.}}
\newcommand{\sout}[1]{}
\begin{document}
\title{A Geometric Observer for Scene Reconstruction Using Plenoptic Cameras}

\author{Sean G. P. O'Brien,
        Jochen Trumpf,
        Viorela Ila,
        and~Robert Mahony% <-this % stops a space
\thanks{S. O'Brien, J. Trumpf, V. Ila, and R. Mahony are with the Research School of Engineering, Austalian National University, Canberra,
ACT, 2601 Australia, and the Australian Centre for Robotic Vision (ACRV) \protect\url{http://www.roboticvision.org}.
e-mail: \{sean.obrien, jochen.trumpf, viorela.ila, rob.mahony\}@anu.edu.au
%Source code available at <github upload>
}}

% The paper headers
\markboth{2018 IEEE 57th Conference on Decision and Control (CDC)}%
{}

\maketitle

%\section*{To do}
%\begin{itemize}
%\item Pruning of introduction
%\item Add citations
%\item Prune convergence properties section
%\item Add simulation results
%\item Add results of observer applied to a light-field captured by the Lytro (?)
%\item Edit problem formulation section.
%\item In Observer Derivation section, I have not made it clear that the depth estimate is well-defined near the boundaries of the camera plane.
%\item Tweak the proof so that it works for relaxed assumptions on the texture map (although this leads to stricter assumptions on the camera trajectory.) (?)
%\end{itemize}
%{\ }\linebreak

% As a general rule, do not put math, special symbols or citations
% in the abstract or keywords.
\begin{abstract}
This paper proposes an observer for generating depth maps of a scene from a sequence of measurements acquired by a two-plane light-field (plenoptic) camera. 
The observer is based on a gradient-descent methodology. The use of motion allows for estimation of depth maps where the scene contains insufficient texture for static estimation methods to work. A rigourous analysis of stability of the observer error is provided, and the observer is tested in simulation, demonstrating convergence behaviour.
\end{abstract}

\IEEEpeerreviewmaketitle

%\section{Problem Formulation} \label{sec:Problem_Formulation}
%In this section we develop the theoretical framework which will be used to derive the naturally emerging photometric error term used in the observer. Formulating this error term will require the introduction of several key concepts. Firstly, the camera model itself will need to be specified, as the physical parameters of this model determine the way in which the camera captures light-fields. Once these physical parameters are specified, projection model of the camera is given, which describes how points in front of the camera appear in the light-field images. The light captured by the camera 
%

\section{Introduction} \label{sec:Introduction}

\PARstart{D}{epth} estimation is a fundamental problem in computer vision that involves reconstruction of a 3D scene from visual measurements obtained from a camera. This has applications in many areas of engineering including inspection of 3D structures, recreation of 3D scenes for virtual and augmented reality, and  preservation of historical data.

A plenoptic camera is a device which captures light-fields that represent not only the intensity of light over a range of angles at a single point, as with a conventional camera, but the light over a range of points in space.
Plenoptic cameras offer some advantages over other types of cameras for the purpose of depth estimation as depth information is highly correlated with light-field gradients \cite{Dansereau2004CS}. Generating depth maps from plenoptic cameras is an active area of research and there have been numerous developments in recent years \cite{Tao2013ICCV} \cite{Zhang2016CSVT} \cite{Zhang2016CVIU} \cite{Wang2016PAMI}. 
However, research on depth estimation using plenoptic cameras has yet to take into account the additional information provided by temporal correlation of data; existing algorithms consider only frame-by-frame single light-field images.

Recent years have seen new developments in the theory of observers for systems with invariance properties \cite{Lageman2010TAC}, \cite{Aghannan2003TAC} \cite{Bonnabel2009TAC}. This work has led to observer designs based on an internal model principle where observer dynamics consists of an internal model, which tracks the dynamics of the system under observation, combined with an innovation term, which minimises some cost function. The innovation term is typically chosen as the gradient of an error function taken with respect to the state estimate. 
%The use of observers offers some natural advantages for robotic tasks where storing a model of the dynamics of the environment is critical for avoiding collisions. \todos{this sentence makes no sense in this paper}

There is an active research community applying an observer design philosophy to computer vision problems \cite{Matthies1989IJCV} \cite{Chen2002TAC} \cite{Chen2004TAC}  \cite{Dahl2007ACC} \cite{Adarve2016RAL} \cite{Keshavan2016CEP} \cite{Grave2015TAC}.  
Early work in this area considered perspective dynamical systems \cite{Chen2002TAC} \cite{Chen2004TAC} \cite{Dahl2007ACC}.  Depth estimation from monocular video data based on extended Kalman filtering (EKF) dates back to the nineties \cite{Matthies1989IJCV}. 
More recent work includes the development of a dynamic filtering algorithm for the computation of dense optical flow in real-time  \cite{Adarve2016RAL}. In \cite{Keshavan2016CEP}, an observer for sparse depth estimation using monocular cameras is proposed. In \cite{Grave2015TAC} an observer for tracking the depth of a given object using from perspective vision data (such as monocular camera data) is formulated that exponentially converges to the object's coordinates. 
 Observers have the advantage that they take into account previously gathered data from the environment rather than only the data available at a given time, allowing for potentially more accurate estimates.
The use of observers also offers some computational advantages. 
Static dense depth estimation methods, which involve minimising a high-dimensional cost function, will typically require tens of update steps per-frame in order to estimate the minimum of the cost function. In contrast, an observer will typically only involve a single cost update step per light-field frame. 

%are too computationally expensive to do in real-time on some embedded platforms. 
%
% \todo{Cite} \todos{If making this claim I would  talk about the few operations involved in the observer estimation (if that is the case) in comparison with  high-dimensional cost function in dense depth estimation.}

%This paper exploits the natural correlation between depth and light-field gradients \cite{Dansereau2004CS} to develop a sensible innovation term for an depth observer providing spatio-temporal data fusion for plenoptic depth estimation.

In this paper, 
%we exploit the natural correlation between depth and light-field gradients \cite{Dansereau2004CS} to 
we develop an observer for estimating a dense depth maps of an entire scene provided the camera motion and using light-field measurements as inputs. We follow a general design philosophy for observers by including dynamics of the depth map as an internal model and using the gradient of a disparity map as an innovation term. To the authors understanding, there is no prior work on applying the observer based approach to depth estimation using \textit{plenoptic} camera data.
The use of a moving camera combined with a dynamic observer is found in simulation to relax observability conditions, so that the knowledge of the motion of the light-field camera allows for the estimation of scenes that would not be observable using static depth estimation techniques due to insufficient texture on the scene. 
For such scenarios, points on the estimated scene will remain stationary until such a time when the camera is viewing these points in front of sufficiently textured regions of the scene. In this way we ensure that every point of the estimated scene converges to a point on the actual scene as long as we can guarantee that each point on the scene estimate is viewed at some time in the future.

In Section \ref{sec:Problem_Formulation}, we formulate a mathematical model of light-field cameras, scenes, depth maps, and photometric errors. In Section \ref{sec:observer_derivation}, we formulate the dynamics assigned to point estimates by the observer. We then discuss in Section \ref{sec:Simulation} details of the numerical implementation of the observer, and show its behaviour for a simple simulated scenario.

\section{Problem Formulation} \label{sec:Problem_Formulation}

In this section we develop the geometric framework used to derive the photometric error term minimised by the observer. To this end, we introduce the camera, the scene and the projection models. The physical parameters of the camera model determine the way in which the camera captures light emanating from the scene. This is defined by projections of the scene points to a set of pixels in subimages formed by the camera multi-lenses array. 
%Formulating this error term will require the introduction of several key concepts. 
%Firstly, the camera model itself is given, as the physical parameters of this model determine the way in which the camera captures light-fields. Because the camera captures light emanating from a scene, scenes are also described and represented. We then describe how the camera captures light emitted from a scene by defining projections of points on a scene to sets of pixels in subimages of the camera. Finally, once this theoretical structure has been developed, a natural photometric error is defined for depth estimates. 

\subsection{Light-Field Cameras} \label{sec:Light-Field_Cameras}

\begin{figure}[ht!]
\centering
\includegraphics[width=3.0in]{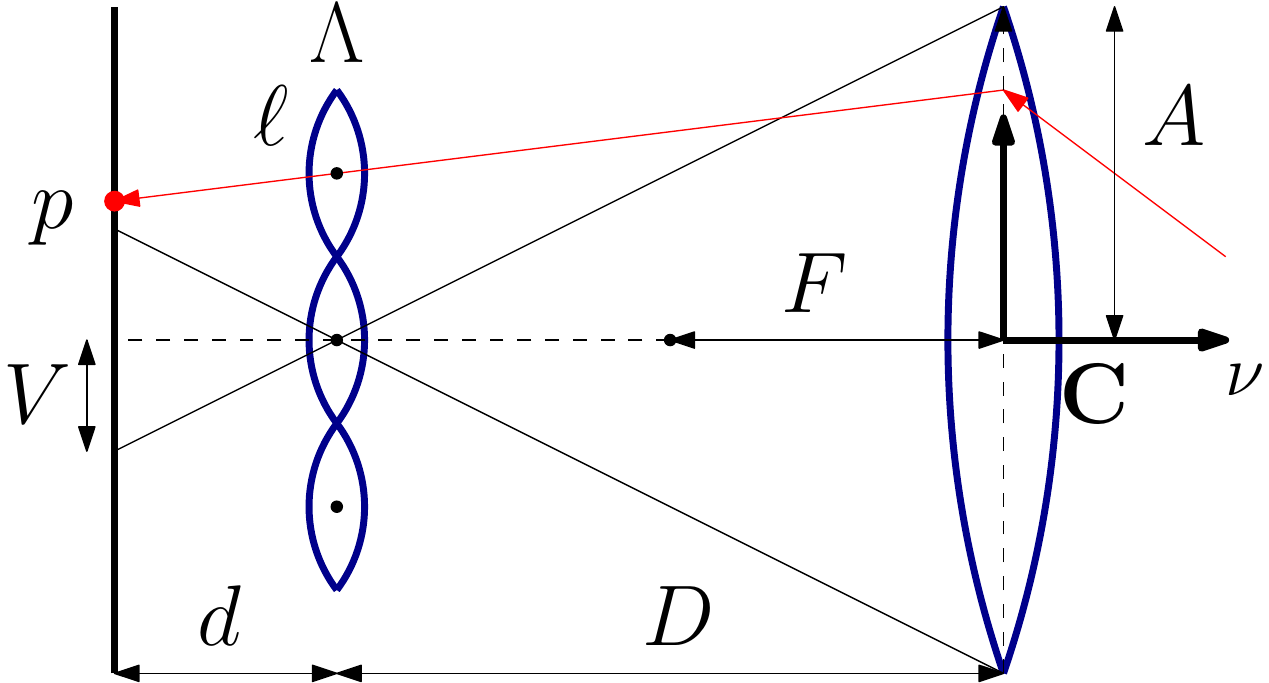}
\caption{The parameters of a focused light-field camera. }
\label{fig:Camera_Model}
\end{figure}

%We begin with a physical description of light-field cameras, as the physical characteristics of these cameras are sufficient for describing how they work in later sections. 
A plenoptic camera is a device that images a scene from a continuous range of points in space, not only from a single optical centre as is the case for monocular cameras. 
Lenslet-based light-field cameras are constructed by positioning a densely-packed array of ``lenslets'' -- lenses of typically microns in diameter across -- between a conventional imaging device, such as a CMOS array, and a focus lens (see Fig. \ref{fig:Camera_Model}). 

We say that a light-field camera maps a \textit{lenslet} $\ell$ and a \textit{pixel} $p$ within the subimage of that lenslet to a colour. The \textit{light-field image} is denoted in this paper as $L: (\ell,p) \mapsto (r,g,b)$, where the latter vector is an RGB colour vector in $[0,1]^3$. 
%Similarly to how a monocular camera produces as its output a function which maps pixels to colours, a light-field camera produces as its output a function which maps a \textit{lenslet} $\ell$ and a \textit{pixel} $p$ within the subimage of that lenslet to a colour. The \textit{light-field image} is denoted in this paper as $L: (\ell,p) \mapsto (r,g,b)$, where the latter vector is an RGB colour vector in $[0,1]^3$. 
%%Light-field cameras consist of an 
%The array of lenslets is positioned on a 
%%rectangular 
%disc which we denote in this paper by $\Lambda$, called the \textit{pupilar plane}. 
Each lenslet is positioned somewhere on a plane $\Lambda$ called the pupilar plane.
%\todo{A lenslet $\ell$ is then an element of the pupilar plane $\Lambda$.} 
The pupilar plane is coplanar with two additional objects: the \textit{retinal plane} and the \textit{focal lens}. The distance between the pupilar plane and retinal plane is denoted by $d$ and the distance between the pupilar plane and the focus lens is denoted by $D$ (see Fig. \ref{fig:Camera_Model}). 

%As in previous papers on light-field cameras \cite{Dansereau2004CS}, we model the focus lens as a thin-lens. A thin-lens has two intrinsic parameters that define a map that projects points on one side of the lens to points on the opposite side. The first is a parameter called the \textit{focal length} of the lens, denoted in this paper by $F$. The second parameter is the \textit{pose} $X$ of the lens, that we will often separate into a translational part $\mathbf{x}$ -- the position of the \textit{optical centre} of the lens --  and a rotational part $R$ that specifies a coordinate system with origin at the optical centre. The pose $X$ is given with respect to a fixed reference frame $\textbf{O}$, and itself defines a body-fixed reference frame $\textbf{C}$ of the entire camera. 

As in previous papers on light-field cameras \cite{Dansereau2004CS}, we model the focus lens as a thin-lens. This model has two intrinsic parameters, the \textit{focal length}  $F$ and the \textit{pose} $X$, with $\mathbf{x}$, the position of the \textit{optical centre} of the lens, and $R$, the rotational part of the pose. The pose $X$ is given with respect to a fixed reference frame $\textbf{O}$, and itself defines a body-fixed reference frame $\textbf{C}$ of the entire camera. 
%\todo{as we treat the optical centre as the centre of the entire camera}. 
One of the axes of $\textbf{C}$, called the \textit{principal axis}, represents the axis that the focal lens is orthogonal to. We define the camera as facing in the positive $z$-direction in the coordinate system $\textbf{C}$, and call the unit-vector pointing in this direction $\nu$ (see Fig. \ref{fig:Camera_Model}).  

%In addition to these parameters, there is a parameter of the thin-lens called the \textit{aperture} $A$ that determines how much of the scene in front of the thin lens is seen by the lens. In this paper, we model the effect of the aperture in terms of equivalent property called the field-of-view, which is a property of the lenslets discussed below. 

%(Strictly speaking, we only require the specification of a point and line to describe a thin lens as the projection it defines is rotationally invariant around the principal axis of the lens. However the rotational part is used later anyway.)

Each lenslet in the pupilar plane $\Lambda$ is modelled as a pinhole camera. A pinhole camera is described by a pose and a distance $d$ of the pinhole from the retinal plane. We assume that all of the lenslets in the camera have the same constant distance $d$ from the common retinal plane. 
%We assume that all of the lenslets in the camera have the same \textit{lenslet focal length} given by $f$. 
%For pinhole cameras, the focal length $f$ is by definition equal to the distance $d$ of the pinhole to the retinal plane.  We hence assume that $d = f$ in the following.  

We assume that the lenslets all share the same orientation $R$ of the thin-lens, and have a constant distance from the optical centre given by $D$, but that they may be positioned anywhere on the disc $\Lambda$. Hence, their pose is entirely determined by specifying their two coordinates on this plane. In summary, we model the lenslet array as a parametrised set of pinhole cameras positioned on a disc and with a common focal length and orientation. 

Each lenslet projects light coming from the space in front of it onto its retinal plane. Each lenslet shares a common retinal plane with all the other lenslets, but the \textit{subimages} produced by each lenslet do not overlap due to the precise choice of the \textit{aperture} $A$ of the focus lens. As the only light source within the camera comes from the circular focus lens, the subimages produced by each lenslet are also circular, and have a radius $V$ called the \textit{subimage radius} determined by the formula $V = \frac{d}{D} A$ (see Fig \ref{fig:Camera_Model}). A factor $s_p$ relates a pixel as seen in one of its subimages to the physical position of that pixel in space, and is given in metres per pixel.
%There is one additional quantity needed to specify a light-field camera, which is needed to relate a pixel as seen in one of its subimages to the physical position of that pixel in space. This factor $s_p$ is the number of metres per pixel, and describes the physical size of the pixels. 

In summary, a light-field camera is represented by the parameter vector $(X,F,D,d,A, s_p)$.

\subsection{Scenes} \label{subsec:Scenes}

\begin{figure}[ht!]
\centering
\includegraphics[width=3.0in]{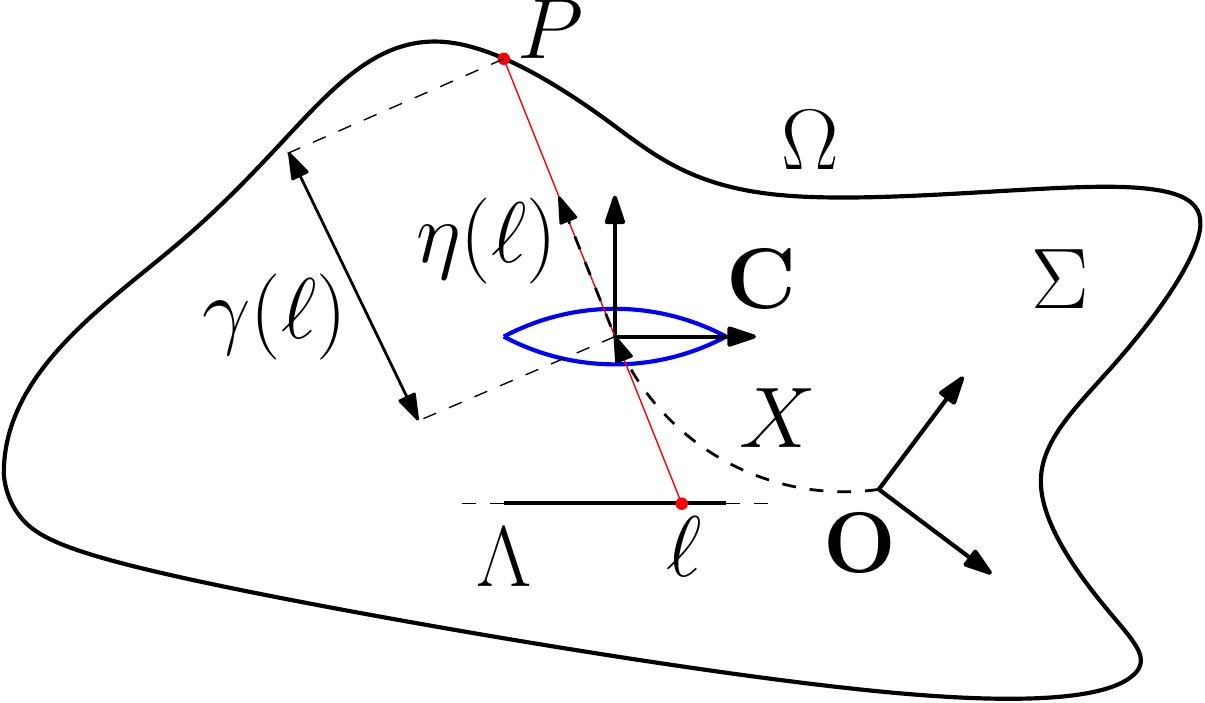}
\caption{A plenoptic camera with pinhole cameras positioned along a plane $\Lambda$ behind a thin-lens shown in blue.}
\label{fig:Scenes_and_Depth_Maps}
\end{figure}

We define an \emph{environment} $\Sigma$ as a piece-wise smooth non-empty open subset of $\R^3$. We define a \emph{scene} $\Omega$ to be the boundary of an environment $\Omega = \p \Sigma$. Defined on a scene is a \emph{brightness map} $\b : \Omega \ra [0, 1]^3$, which assigns an RGB colour vector to each point on the scene $\Omega$. Note that this brightness map does not depend on direction, which in physical terms means we are assuming that the brightness map satisfies a Lambertian condition \cite{Hartley2004}.

Although a scene is a set, we may locally parametrise this set via a \textit{distance map} $\gamma: \Lambda \ra \R_+$. The distance maps used in this paper are defined with respect to the optical centre $\mathbf{x}$ of the camera in the environment $\Sigma$, and take as input a lenslet $\ell$ and return the distance from $\mathbf{x}$ to the point on the scene $\Omega$ in direction \mbox{$\eta(\ell) := (\mathbf{x} - \ell)/ \dnorm{\mathbf{x} - \ell}$}, (see Fig.~\ref{fig:Scenes_and_Depth_Maps}). Together, a brightness map $\b$ and a distance map $\gamma$ are sufficient to represent the visible portion of a scene. 

\subsection{Projection Model} \label{subsec:Projection_Model}

\begin{figure}[ht!]
\centering
\includegraphics[width=3.0in]{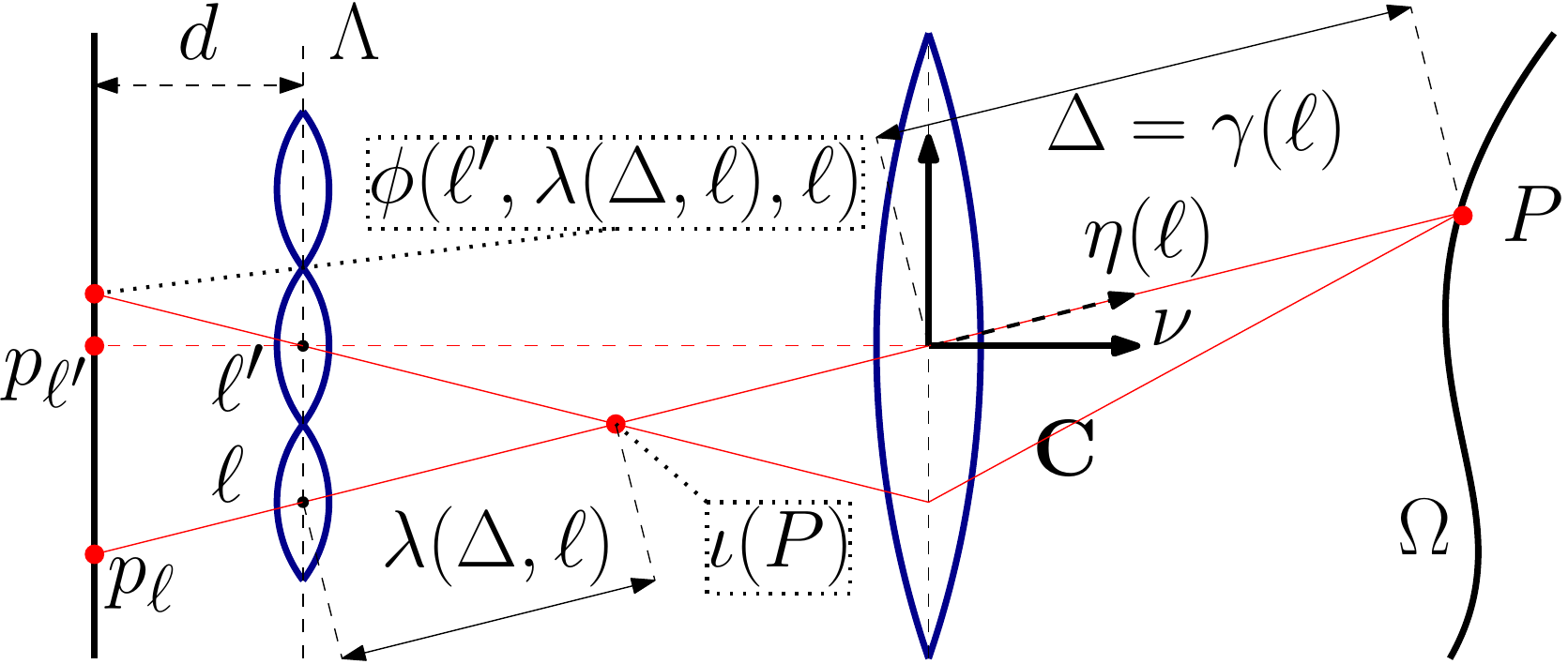}
\caption{The thin-lens projection of the point $P$ on the scene $\Omega$ is given by $\iota(P)$. The point $P$ has a distance of $\Delta = \gamma(\ell)$ from the optical centre of the camera in direction $\eta(\ell)$, corresponding to a distance $\lambda(\Delta,\ell)$ of its corresponding image point $\iota(P)$ from the lenslet $\ell$. The perspective projection $\phi(\ell',\lambda(\Delta,\ell),\ell)$ of \mbox{$\iota(P) = \ell + \lambda(\Delta,\ell) \eta(\ell)$} through lenslet $\ell'$ is illustrated.}
\label{fig:Image_Points_Projections}
\end{figure}

%\todo{A light-field camera records the colours of rays of light passing through a region of space.} 

We assume that the rays of light are being emitted from a scene, and that the colour of these rays are determined entirely by which point on the scene they are emitted from, therefore we can treat the light-field camera as projecting each point on a scene to a set of points on the retinal plane. 

We therefore model how the light-field camera records light in two steps: first, for a given point $P$ on the scene $\Omega$, we project the point $P$ through the thin-lens, resulting in an \textit{image point} $\iota(P)$, then we project this image point through each of the lenslets in $\Lambda$ for which the image point is visible via the perspective projections associated with those lenslets. 

The projection through the focus lens is determined by the well-known thin-lens equation \cite{Hartley2004}. 
%\todo{We assume for this equation that the principal axis of the thin lens is coincident with the $z$-axis of the body-fixed frame $\textbf{C}$ of the camera. }
Points in the environment, unless otherwise specified will be assumed to have coordinates expressed in the body-fixed frame $\textbf{C}$. We will represent these points in terms of the image of a depth map, so that $P = \gamma(\ell) \eta(\ell)$. The image point $\iota(P)$ corresponding to the point $P$ is then given by \cite{Hartley2004}
\begin{equation} \label{eq:Image_Map}
\iota(P) = \frac{F}{F - P \cdot \nu} P, 
\end{equation}
where $F$ is the focal length and $\nu$ is the direction the camera is facing in, see Fig. \ref{fig:Image_Points_Projections}. The image point $\iota(P)$ has the special property that any ray of light passing through it also passes through $P$ after this ray is refracted by the thin-lens. 

%Because each point $P = \gamma(\ell) \eta(\ell)$ in the scene is projected through the thin-lens to $\iota(P)$, and this image point $\iota(P)$ is also a scalar multiple of $\eta(\ell)$, it is convenient in the subsequent formulation to define a ``virtual'' distance map $\lambda$ which defines the ``virtual scene'' $\iota(\Omega)$. This is because it is the virtual scene $\iota(\Omega)$ which is imaged by lenslets on the pupilar plane.

Similarly to the distance map $\gamma$, it is convenient to define a ``virtual'' distance map $\lambda: \R^+ \times \Lambda \ra \R$ which defines the ``virtual scene'' $\iota(\Omega)$. 
The algebra describing the perspective projection through each lenslet $\ell$ is simplified by expressing the distance of an image point $\iota(P)$ as its distance to the lenslet. Because of this, we define the virtual distance \mbox{$\delta = \lambda(\Delta,\ell)$}, corresponding to a real distance $\Delta = \gamma(\ell)$ where $\ell \in \Lambda$ is a lenslet, to be the distance of the point on the virtual scene $\iota(\Omega)$ from the lenslet $\ell$ in direction $\eta(\ell)$. Note that $\lambda(\Delta,\ell)$ can be negative, unlike the real distance $\Delta$. 
%Given a distance map $\gamma$, and a lenslet $\ell$, the corresponding virtual distance map is given by
%Let $\gamma$ be a distance map and let $\ell$ be a given lenslet. Suppose that $\Delta = \gamma(\ell)$. 
With that, the virtual distance $\delta$ corresponding to distance $\Delta$ is given by
\begin{equation} \label{eq:Virtual_Depth}
\delta = \lambda(\Delta, \ell) = \frac{F (\Delta \eta(\ell) \cdot \nu))}{F - (\Delta \eta(\ell) \cdot \nu)} - \ell \cdot \eta(\ell).
\end{equation}

%We make the distance map $\gamma$ explicit in the notation for the virtual distance $\lambda_\gamma(\ell)$ sicne we will later also consider virtual distances $\lambda_{\hat{\gamma}}(\ell)$ associated with distance map estimates $\hat{\gamma}$, see Section \ref{sec:observer_derivation}.
%\todo{Introduce perspective projection $\pi_Q(P)$ and define $\phi$ with respect to it. }

Given a point $Q \in \R^3$ and a specified plane $\Gamma$, we define for any point $P \in \R^3$ that satisfies $P \cdot x < 0$ for all $x \in \Gamma$ (meaning that $Q$ is between $P$ and $\Gamma$), the projection $\pi^{\Gamma}_{Q}(P)$ as the point of intersection of the line passing through both $Q$ and $P$ with $\Gamma$. We omit $\Gamma$ from the notation whenever the meaning is clear from context. 
%\todos{this projection definition is confusing, rephrase }

 We define the map $\phi$ as $\phi(\ell',\delta,\ell) := \pi_{\ell'}(\ell + \delta \eta(\ell))$ where the plane $\Gamma$ is taken to be the retinal plane of the lenslet $\ell'$. 
%Given a lenslet $\ell$, a virtual distance $\delta =  \lambda(\gamma(\ell), \ell)$ associated with this lenslet, and another lenslets $\ell' \in \Lambda$, we can derive via a similar triangles argument the perspective projection of the point $\iota(P) = \ell + \delta \eta(\ell)$ through a given lenslet $\ell'$ to be the following map $\phi$
The map $\phi$ is derived via a similar triangles argument and is explicitly given by
\begin{equation} \label{eq:Lenslet_Projection}
\phi(\ell',\delta,\ell) = \frac{d}{\delta \eta(\ell) \cdot \nu} (\ell' - \ell - \delta \eta(\ell)) + \ell,
\end{equation}
where $d$ is the lenslet focal length, see Fig. \ref{fig:Image_Points_Projections}. Given that each lenslet has the same limited subimage radius $V$ (cf. Section \ref{sec:Light-Field_Cameras}), not all lenslets will have a given image point $\iota(P)$ visible in their subimages. An image point $\iota(P)$ will only be visible to a given lenslet $\ell'$ if the perspective projection of $\iota(P)$ through $\ell'$ is within distance $V$ of the central pixel $p_{\ell'}$ of the lenslet $\ell'$, see Fig. \ref{fig:Camera_Model} and Fig. \ref{fig:Image_Points_Projections}.

The set $W(\Delta,\ell)$ is the set of lenslets $\ell'$ in $\Lambda$ for which $\dnorm{\phi(\ell',\lambda(\Delta,\ell),\ell)-p_{\ell'}} < V$, i.e. the set of lenslets for which the image point $\iota(P) = \ell + \lambda(\Delta,\ell) \eta(\ell)$ is visible.  

%We will only ever need to represent points $P$, defined in the camera's body-fixed frame $\textbf{C}$, in terms of the depth map $\gamma$, so that the point is expressed in the form $P = \gamma(\ell) \eta(\ell)$. As such, we will determine the projection of points through the focus lens in terms of this representation. 

In summary, a point $P$ in the environment is projected to a lenslet image point in a two-step process given by eq. \eqref{eq:Image_Map} and \eqref{eq:Lenslet_Projection}, where $\ell' \in W(\Delta,\ell)$. 

\subsection{Photometric Errors Associated With Distance Maps} \label{subsec:photometric_errors}

\begin{figure}[ht!]
\centering
\includegraphics[width=3.0in]{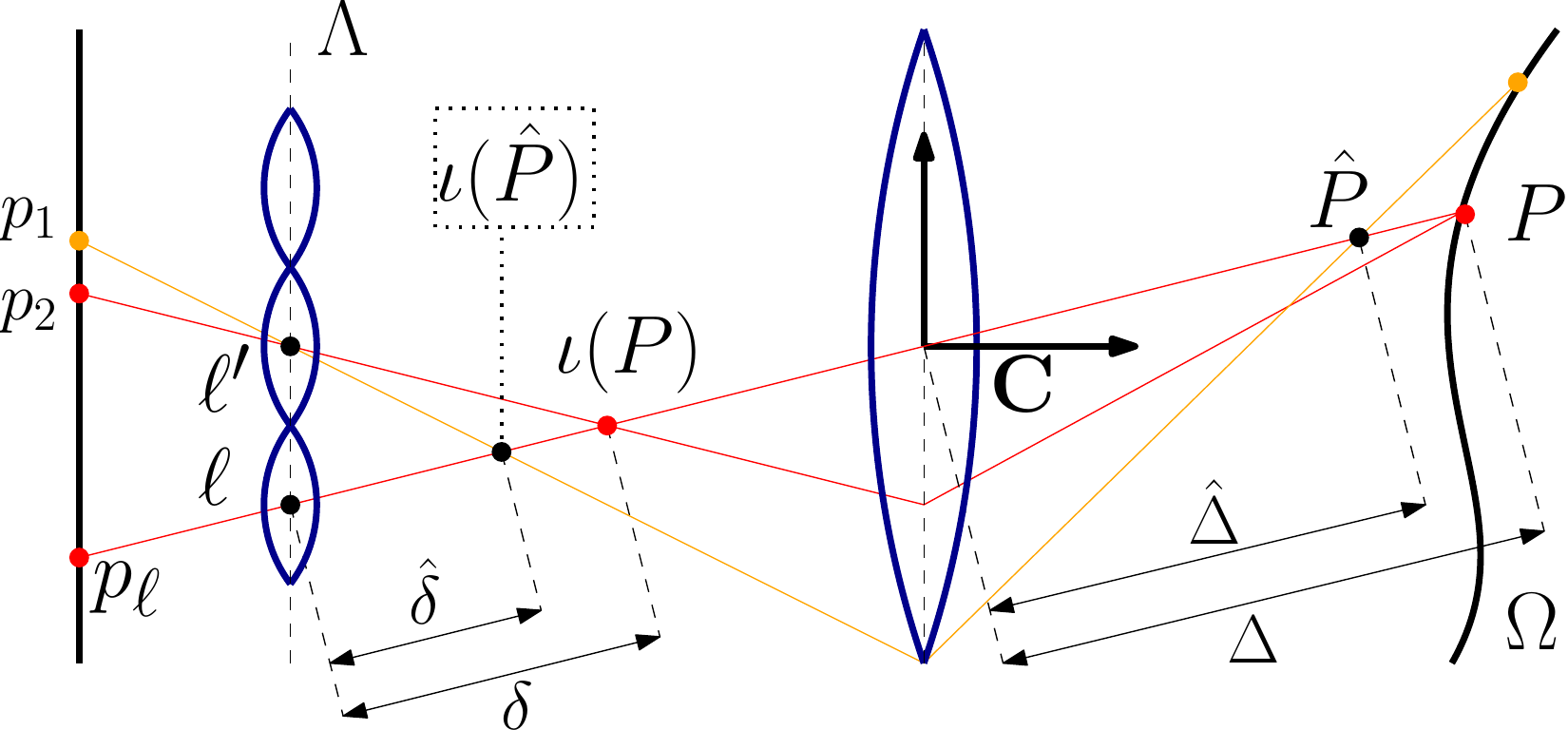}
\caption{A true distance $\Delta$ is shown together with an incorrect distance estimate $\hat{\Delta}$. These distances correspond to virtual distances $\delta$ and $\hat{\delta}$, respectively. %The sum of absolute differences of the colours of the rays passing through the point $\iota(P)$ with the reference ray $(\ell,p_\ell)$ will be zero, but the sum of absolute differences of the colours of the rays passing through $\iota(\hat{P})$ will not be. For example, 
The ray with coordinates $(\ell',p_2)$ has the same colour as the ray with coordinates $(\ell,p_\ell)$, but the ray with coordinates $(\ell',p_1)$ does not.}
\label{fig:Photometric_Error}
\end{figure}

Now, we have developed the framework necessary to state the photometric error which will be minimised by the observer. Suppose that the camera is positioned somewhere in the environment $\Sigma$ with pose $X$, that the true distance of the scene in direction $\eta(\ell)$ is $\Delta = \gamma(\ell)$, and that we have a distance estimate $\hat{\Delta}$ and the light-field image $L$. 

%\todo{Consider a depth $\hat{\gamma}(\ell)$ assigned to a single lenslet $\ell$.} 
The ray of light which passes through both the lenslet $\ell$ and the point $\Delta \eta(\ell)$ is the same ray of light which passes through $\ell$ and $\hat{\Delta} \eta(\ell)$ for any distance estimate $\hat{\Delta}$. Therefore, if the distance estimate $\hat{\Delta}$ is accurate, we should expect that all other rays passing through the point $\hat{\Delta} \eta(\ell)$ have the same colour, assuming a Lambertian constraint on the colour map $\beta$, see Fig. \ref{fig:Photometric_Error}.

Therefore, the sum of absolute differences between the colours of all other rays passing through $\hat{\Delta} \eta(\ell)$ and the \textit{central ray} associated with $\ell$ -- that is the ray passing through both $\ell$ and the optical centre of the camera -- should be minimised by accurate distance estimates. 

We define the square of the absolute difference in colour between a central ray of a lenslet $\ell \in \Lambda$ and a ray passing through both another lenslet $\ell' \in \Lambda$ and a point estimate $\hat{\Delta} \eta(\ell)$ as the following \emph{pairwise lenslet error} function $e$ $$e(\ell',\hat{\Delta},\ell) := \dnorm{L(\ell,p_\ell) - L(\ell',\phi(\ell',\lambda(\hat{\Delta},\ell),\ell))}^2.$$

Because in practice, a plenoptic camera only has lenslets positioned on a subset $\Lambda^* \subset \Lambda$ that is non-empty, bounded,  convex and open relative to $\Lambda$, we will only update depths assigned to lenslets $\ell$ on this set $\Lambda^*$. However, we will assume that we have light-field information available to us outside of this set in order to ensure differentiability properties of the error function. In practice, this means that for any bounded, convex and relatively open subset of lenslets there is a maximum distance for which we can ensure the local error function defined below is  continuously differentiable.   

Let $\hat{Q}^z = \iota(\hat{\Delta} \eta(\ell)) \cdot \nu$, be the $z$-component of the image of a point estimate $\hat{P}$ of distance $\hat{\Delta}$ corresponding to a lenslet $\ell$. We propose that given a lenslet $\ell$, a distance estimate $\hat{\Delta}$, and a light-field image $L$, the following local error function $\eps$ should be minimised by accurate estimates of the distance: 
\begin{equation} \label{eq:photometric_error}
\eps(\hat{\Delta},\ell) := \left( 1 + \frac{D}{\hat{Q}^z} \right)^{-2} \int_{W(\hat{\Delta},\ell)} e(\ell',\hat{\Delta},\ell) d \ell'.
\end{equation}
\begin{figure}[t]
\centering
\includegraphics[width=3.0in]{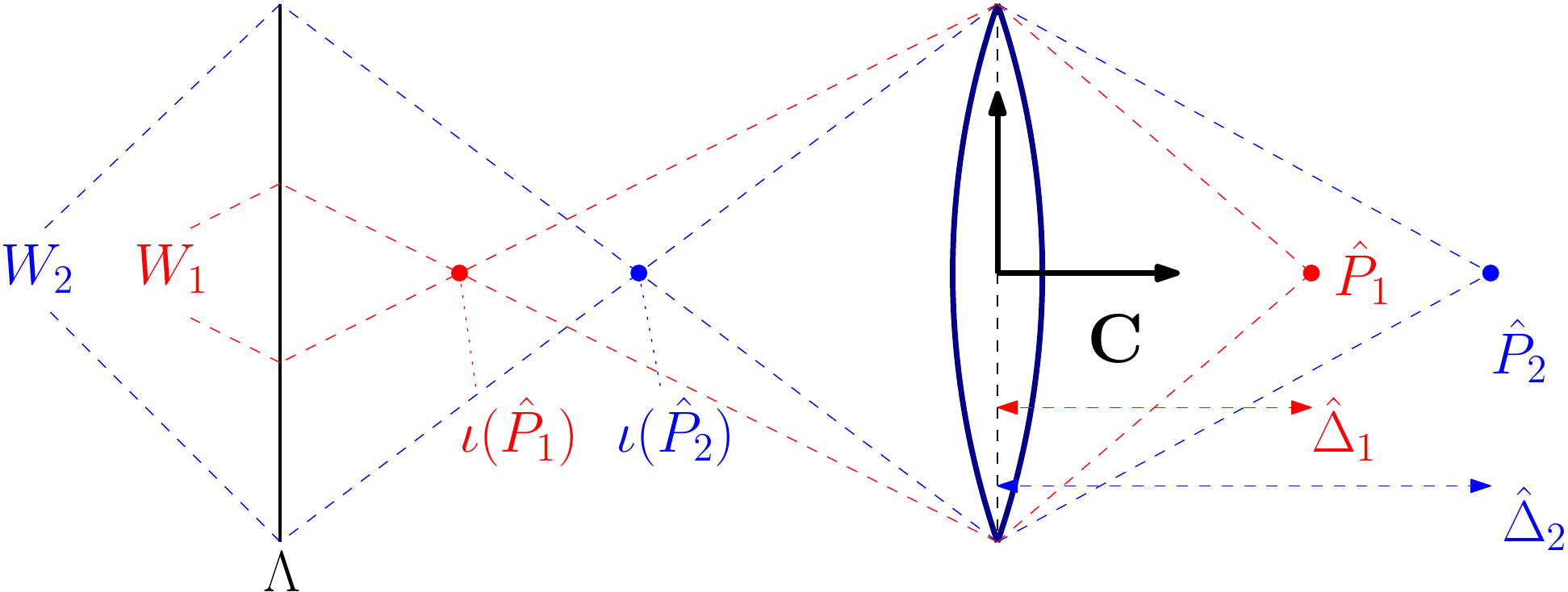}
\caption{The windows $W_1 = W(\hat{\Delta}_1,\ell)$ and $W_2 = W(\hat{\Delta}_2,\ell)$ corresponding to depth estimates $\hat{\Delta}_1$ and $\hat{\Delta}_2$ where $\hat{\Delta}_1 < \hat{\Delta}_2$.}
\label{fig:Window_Size}
\end{figure}
The purpose of the factor before the integral is to counteract the effect of the varying size of the window $W(\hat{\Delta},\ell)$ which will otherwise result in smaller errors for smaller distance estimates, regardless of the correctness of these estimates, See Fig. \ref{fig:Window_Size}. 

It is the gradient of this error function with respect to estimated depth which will be used to update point estimates. 

\section{Observer Derivation} \label{sec:observer_derivation}

In this section, we use the error function $\eps(\hat{\Delta},\ell)$ defined in the previous section to derive an observer based on the gradient of this error map. Note that other error functions could be considered, but are beyond the scope of this paper. The trajectories of point estimates given by this observer are shown in the appendix to have limit points on the scene $\Omega$, given some assumptions on the scene $\Omega$, brightness map $\b$, and camera trajectory $X_t$. 

Because the scene is stationary in reference frame $\mathbf{O}$, it is easiest to express the dynamics of point estimates in this reference frame, as it makes the internal model term trivial, since for points $P$ on the scene $\dot{P}(t) = 0$ in frame $\mathbf{O}$. Therefore, the internal model term in the observer will also be trivial for all point estimates. 

Because we are now expressing the various maps used in this derivation in frame $\mathbf{O}$, we index several of the functions and variables which are dependent on time by $t$. These include the camera's pose $X_t$ expressed in $\textbf{O}$, the pupilar plane $\Lambda_t$ and subset $\Lambda^*_t$ as subsets expressed in $\mathbf{O}$, the camera's optical centre $\mathbf{x}_t$, the direction map $\eta_t$, and the light-field $L_t$. 

For a given point $\hP \in \R^3$ expressed in the fixed coordinate frame $\mathbf{O}$, let $\ell_t = \pi_{\mathbf{x}_t}(\hP)$, then we define
\begin{equation} \label{eq:observer_vector_field}
v_t(\hP) := \begin{cases}
0-\nabla_1 \eps(\hP \cdot \eta_t(\ell_t),\ell_t) \eta(\ell_t), & \ell_t \in \Lambda^*_t, \\
0, & \text{otherwise}.
\end{cases}
\end{equation}

The innovation term here is the gradient of the error function $\eps$ with respect to the first argument. The first argument of this error function is the distance of a point from the optical centre of the camera. Hence, the innovation term serves to update distances of point estimates in the direction that minimises the photometric error associated with the point estimate. Because we assume that the actual scene is stationary, the internal model for each point is $0$ as they are not moving in reference frame $\mathbf{O}$. 

The observer updates a point estimate with starting position $\hP_0$ according to
the time-varying vector field $v_t$, so that
\begin{equation} \label{eq:Observer_Estimate}
\dot{\hP}_t := v_t(\hP_t).
\end{equation}

%Since the scene is represented here exactly as a set of points with no further constraints (so that $\Omega = \{ P_\a \}_{\a \in A}$, for some index set $A$), the parameters of the state estimate $\hat{\Omega}_t$ are the points themselves. The innovation term of the observer is the vector field $v_t$ applied to each of the points in the scene, as these are the parameters of the scene. Different representations of scenes with better regularity properties will be considered in future work. 

The piecewise definition of $v_t$ reflects the fact that we are only updating depths for lenslets $\ell \in \Lambda^*_t$. 
The method is shown to provide accurate point estimates in simulation, see Section \ref{sec:Simulation}. 
A proof of convergence of solutions of \eqref{eq:Observer_Estimate} to the true values is given in Appendix \ref{sec:proof}.

\section{Theoretical Results} \label{sec:Theory}

Convergence of the observer design given in Section  \ref{sec:observer_derivation} is stated by Theorem \ref{thmE1} which states that a point estimate defined by \eqref{eq:Observer_Estimate} converges to the actual scene $\Omega$. This result holds given the following assumptions. However, this does not mean that the listed assumptions are the weakest possible to ensure asymptotic convergence. 

In order to avoid unnecessary discussions of the subtleties of solution concepts for differential equations with discontinuous right hand side \cite{filippov1988},
we assume existence and uniqueness of absolutely continuous solutions of \eqref{eq:Observer_Estimate} for all initial conditions. This will be the case for reasonable camera trajectories.

We denote the topological closure of a set $S\in\R^3$ by $\text{cl}(S)$.

\begin{definition}
The set $C^+(B,\hP)$ is the positive half-cone with apex $\hP \in \R^3$ spanned by the bounded convex set $B \subset \R^3$, where $\hP \not\in B$, see Fig. \ref{fig:Sub-cones}. 
Formally, it is the set of $\hP' \in \R^3$ for which there exists a point $\mathbf{x}' \in B$ and an $\a > 0$ such that $\hP' - \hP = \a (\hP - \mathbf{x}')$.
The set $C^+(B,\hP)$ is open whenever $B$ is, does not contain the apex $\hP$, and extends to infinity. 
We denote $C^+_0(B,\hP)=C^+(B,\hP)\cup\{\hP\}$.
The negative half-cone, $C^-(B,\hP)$, is defined as the set of $\hP' \in \R^3\setminus\text{cl}(B)$ for which there exists a point $\mathbf{x}' \in B$ and an $0<\a<1$ such that $\hP' - \hP = -\a (\hP - \mathbf{x}')$. 
The set $C^-(B,\hP)$ is open whenever $B$ is, does not contain the apex $\hP$, is bounded and sits atop the base $B$. 
\end{definition}

The following constant defines the minimum depth a point has if the image of that point lies between the focal lens and the pupilar plane: 
\begin{align*}
\Delta_{\text{min}}:=\frac{1}{\inf_{\ell \in \Lambda^*} (\eta(\ell) \cdot \nu)} \max \left(F,\frac{D F}{F - D} \right).
\end{align*}

\begin{assumption} \label{assume:camera_contained}
$X_t$ is continuous in $t$ and there exists an open ball $\mathbf{B} \subset \Sigma$ centred at $0$ in reference frame $\mathbf{O}$ such that both the optical centre $\mathbf{x}_t$ and the bounded cone $\{ Q \in C^+(\Lambda^*_t,\mathbf{x}_t)\,|\, Q \cdot \nu_t \leq \Delta_{\text{min}}\}$ are contained within $\mathbf{B}$ for all $t \geq 0$.   
\end{assumption}

This assumption ensures that the camera moves in a continuous fashion and never gets too close to the scene, allowing us to pick initial conditions of at least distance $\Delta_\text{min}$ away from the focal lens of the camera. 

\begin{assumption} \label{assume:increasing_colour_diff}
Let $P$, $x_1$, $x_2 \in \Omega$. If $\dnorm{x_1 - P} > \dnorm{x_2 - P}$ then $\dnorm{\b(x_1) - \b(P)} > \dnorm{\b(x_2) - \b(P)}$.
\end{assumption}

This assumption states that the colour map is monotonic. This is one assumption which may potentially be weakened in future work.

\begin{assumption} \label{assume:convexity}
The scene $\Omega$ is a convex surface.
\end{assumption}

This assumption may be weakened in future work to the scene being star-shaped with respect to $\textbf{B}$ from Assumption~\ref{assume:camera_contained}. 

It will also be convenient in the following proof to define the set of times for which a given point estimate $\hP_t$ is seen by the camera.  

\begin{definition}\label{def:TP0}
Given an initial condition $\hP_0$ of the system \eqref{eq:Observer_Estimate}, define $T(\hP_0)$ to be the set of times $t > 0$ for which $\pi_{\mathbf{x}_t}(\hP_t) \in \Lambda^*_t$ and $\hP_t \cdot \nu_t > 0$. 
\end{definition}

Note that $t \in T(\hP_0)$ implies that $\hP_t \in C^+(\Lambda^*_t,\mathbf{x}_t)$ and $\dhP_t = -\nabla_1 \eps(\hP_t \cdot \eta_t(\ell_t),\ell_t) \eta_t(\ell_t),$ where $\ell_t = \pi_{\mathbf{x}_t}(\hP_t) \in \Lambda^*_t$.

Lastly, we wish to ensure that there is always a future interval of time for which a given point estimate, and a neighbourhood around it, will be seen by the camera. 
Let $B_r(P) \subset \R^3$ denote the open ball of radius $r > 0$ centred at $P \in \R^3$.

\begin{assumption} \label{assume:persistence}
There exists a $\rho > 0$ and a $\Delta t > 0$ such that for a given initial condition $\hP_0$, and all times $t > 0$ there exists a $t^+ > t$ such that $\pi_{\mathbf{x}_s}(\text{cl}(B_{\rho}(\hP_s))) \subset \Lambda^*_s$ and $\hP' \cdot \nu_s > 0$ for all $\hP' \in \text{cl}(B_{\rho}(\hP_s))$ and for all $s \in [t^+, t^+ + \Delta t]$.  
In particular, $[t^+, t^+ + \Delta t]\subset T(\hP_0)$.
\end{assumption}

Under these assumptions, we have the following result. 

\begin{theorem} \label{thmE1}
Let $\hP_0\in\Sigma$ and $\hP_0\not\in\text{cl}(\mathbf{B})$, where $\mathbf{B}$ is from Assumption \ref{assume:camera_contained}. Then there exists a point $P \in \Omega$ such that $\lim_{t \ra \infty} \hP_t = P$.
\end{theorem}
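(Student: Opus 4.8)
The plan is to exhibit a Lyapunov-type function measuring how far the estimate is from the scene, show it is non-increasing along trajectories, control the total arc length so that the limit is a \emph{single} point, and finally use the persistence condition to force the distance to zero. The one structural feature of \eqref{eq:Observer_Estimate} I would exploit at the outset is that $\dhP_t$ is always a scalar multiple of $\eta_t(\ell_t)$, so $\hP_t$ moves only along the instantaneous line of sight through $\mathbf{x}_t$; by convexity (Assumption~\ref{assume:convexity}) this ray meets $\Omega$ in a unique visible point $P^*_t$, and the first argument of $\eps$ is exactly the depth of $\hP_t$ along that ray. The first substantive lemma would be that, by the monotone-colour condition (Assumption~\ref{assume:increasing_colour_diff}), the map $\hat\Delta\mapsto\eps(\hat\Delta,\ell_t)$ is minimised at the true depth and decreasing for under-estimates, so the innovation $-\nabla_1\eps\,\eta_t(\ell_t)$ points from $\hP_t$ towards $P^*_t$. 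Together with Assumption~\ref{assume:camera_contained} (the camera stays in $\mathbf{B}$ at distance at least $\Delta_{\text{min}}$ from the scene) this both keeps $\hP_t$ in a fixed compact subset of $\Sigma$ and shows the flow never pushes the estimate past $\Omega$, so $\hP_t\in\Sigma$ for all finite $t$.

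I would then take $V(t):=\mathrm{dist}(\hP_t,\Omega)$. On the times where $\hP_t$ is not seen it is stationary and $V$ is constant, regardless of the camera motion; on $T(\hP_0)$ (Definition~\ref{def:TP0}) the estimate moves towards the boundary point $P^*_t$, and the key estimate to establish is $\dot V\le -c\,\dnorm{\dhP_t}$ for a uniform $c>0$. The direction (that $V$ decreases) comes from the monotonicity lemma above; the uniform constant comes from a transversality bound, namely that the viewing rays meet the convex surface $\Omega$ at an angle bounded away from tangency. This is where Assumption~\ref{assume:camera_contained} does the work: since $\hP_t$ lies in $C^+(\Lambda^*_t,\mathbf{x}_t)$ with apex $\mathbf{x}_t\in\mathbf{B}$ while the scene stays outside the bounded near-cone of radius $\Delta_{\text{min}}$, the angle between the line of sight and the inward normal at the nearest boundary point is uniformly controlled on the compact region from the first step.

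Granting this inequality the proof closes quickly. Since $V$ is non-increasing and bounded below it converges, and $\int_0^\infty\dnorm{\dhP_t}\,dt\le V(0)/c<\infty$, so $\hP_t$ is Cauchy and converges to some $P$. To see $P\in\Omega$ I would argue by contradiction using Assumption~\ref{assume:persistence}: if $\mathrm{dist}(P,\Omega)>0$ then $\hP_t$ is eventually bounded away from $\Omega$, so on each recurrent interval $[t^+,t^++\Delta t]\subset T(\hP_0)$ the colour mismatch, and hence $\norm{\nabla_1\eps}$, is bounded below by monotonicity of $\beta$ together with compactness; this forces $V$ to decrease by a fixed positive amount infinitely often, contradicting its convergence. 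Hence $\mathrm{dist}(P,\Omega)=0$, and since $\Omega$ is closed, $P\in\Omega$.

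The step I expect to be the main obstacle is the descent inequality $\dot V\le -c\,\dnorm{\dhP_t}$ on $T(\hP_0)$: it couples the purely photometric information (only qualitative monotonicity of $\eps$ in its depth argument, itself extracted from Assumption~\ref{assume:increasing_colour_diff} through the window integral and the normalising factor in \eqref{eq:photometric_error}) with the geometry of a continuously changing viewing ray. It is precisely the uniform transversality constant $c$ — ruling out grazing incidence, where the radial motion would be nearly tangent to $\Omega$ and $V$ would stall — that must be pinned down from Assumptions~\ref{assume:camera_contained} and \ref{assume:convexity}.
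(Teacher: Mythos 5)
Your architecture --- a monotone ``energy'' $V(t)=\mathrm{dist}(\hP_t,\Omega)$, a finite arc-length bound to force a single limit, then a persistence-based contradiction to place that limit on $\Omega$ --- breaks at exactly the step you flagged, and the failure is not a missing transversality constant: the inequality $\dot V\le -c\,\dnorm{\dhP_t}$ is false, and so is even the weaker claim $\dot V\le 0$. The flow moves $\hP_t$ toward the scene point $P^*_t$ cut out by the current viewing ray, but $V$ is measured to the \emph{nearest} scene point, whose direction from $\hP_t$ (the outward normal at the foot point) is unrelated to the ray direction $\eta_t(\ell_t)$; when the camera sits close to one side of $\Omega$ and looks across the environment, the estimate moves essentially parallel to, or even away from, the nearby piece of $\Omega$ that realises $V$. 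Concretely, in a planar slice of a configuration the assumptions permit: let $\Omega$ be the circle of radius $100$ centred at $(0,99)$ (so the origin is interior, at distance $1$ from the bottom of $\Omega$), let $\mathbf{B}=B_{0.9}(0)$, $\mathbf{x}_t=(0,-0.85)$, and $\hP_t=(2,-0.5)\in\Sigma\setminus\text{cl}(\mathbf{B})$. The estimated depth along the ray ($\approx 2.03$) is far below the true depth, so by Lemma \ref{lem:error_minima} the innovation is $\dhP_t=s\,\eta$ with $s>0$ and $\eta\approx(0.985,0.172)$; but the nearest point of $\Omega$ lies almost directly \emph{below} $\hP_t$, with $n\approx(0.020,-1.000)$, giving $\dot V=-s\,(\eta\cdot n)\approx +0.15\,s>0$. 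Since $V$ can strictly increase, the bound $\int_0^\infty\dnorm{\dhP_t}\,dt\le V(0)/c$ evaporates, and with it your proof that $\hP_t$ is Cauchy --- which is the actual content of the theorem (existence of a \emph{single} limit $P$). Your closing step (persistence plus a compactness lower bound on the gradient to rule out an interior limit) is sound once convergence is in hand, and is essentially the paper's Prop.~\ref{propD3} and Lemma~\ref{lemD1}; the gap is everything before it.

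The paper's proof is engineered precisely to avoid any such metric quantity. It uses that $\dhP_t$ points not merely away from the camera but into the open cone $C^+(\mathbf{B},\hP_t)$ spanned by the whole ball $\mathbf{B}$ (Prop.~\ref{propC0}), upgrades this via the integral estimate built on Prop.~\ref{propB2} to the order-type monotonicity $\hP_\t\in C_0^+(\mathbf{B},\hP_t)$ for all $\t\geq t$ (Props.~\ref{prop:subcone_containment} and \ref{prop:nested_cones}), and then obtains single-point convergence from cone geometry rather than arc length: any accumulation point $Q$ satisfies $Q\in C^+(\mathbf{B},\hP_t)$ for all $t$ (Prop.~\ref{propC3}), so the tail of the trajectory is trapped in $C^-(\mathbf{B},Q)\cap C_0^+(\mathbf{B},\hP_t)$, a set whose diameter is proportional to $\dnorm{\hP_t-Q}$ (Props.~\ref{prop:cone_diameter} and \ref{propC4}), forcing every accumulation point to be a limit. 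If you want to retain a Lyapunov flavour, the monotone object must respect this cone order --- for instance the nested family $C^+(\mathbf{B},\hP_t)$ itself, or the projection onto the fixed axis appearing in Prop.~\ref{propB2} --- not the Euclidean distance to $\Omega$; with that replacement your final contradiction argument can be kept nearly verbatim.
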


\section{Simulation} \label{sec:Simulation}

The observer derived in the previous section was verified in simultation for a simple scenario. In order to do this, synthetic light-field data was generated. In our simulations, light-field data was represented by a large $m \times M$ by $n \times N$ resolution image where $m \times n$ is the resolution of the subimage produced by a single lenslet $\ell$, and $M \times N$ is the number of lenslets. 

The light field camera is modelled as a rectangular array of lenslets positioned in front of a rectangular array of pixels.  The colour assigned to a pixel $p$ in the subimage of lenslet $\ell$ is generated using ray-tracing. 
The pixel location is where the ray passing through $p$ and $\ell$ is refracted to and can be calculated using \eqref{eq:Image_Map}. The colour assigned to the lenslet-pixel pair $(\ell,p)$ is then given by the colour $\beta(P)$ of the point $P$ on the 3D scene where the refracted ray corresponding to $(\ell,p)$ intersects the scene.

In the current implementation, the scene estimates are represented using a point-cloud. Since we are only using a discrete number of lenslets and pixels, an appropriate discretisation of the point-estimate update in \eqref{eq:Observer_Estimate} must be calculated. The choice used in this paper is as follows. For a given point-estimate $\hat{P}_t$ at time $t$, the perspective projection $\pi_{\mathbf{x}_t}(\hat{P}_t)$ of the point-estimate onto the plane of distance $D$ behind the optical centre $\mathbf{x}_t$ is first calculated. We then determine whether $\pi_{\mathbf{x}_t}(\hat{P}_t)$ lies in $\Lambda^*_t$. If not, it is assigned $0$ velocity. Otherwise, if the projection is found to lie within the bounds of $\Lambda^*_t$, we find the nearest lenslet $\ell$ to $\pi_{\mathbf{x}_t}(\hat{P}_t)$ and assign to $\hat{P}_t$ the velocity $- \nabla_1 \eps(\hat{P}_t \cdot \eta_t(\ell), \ell) \eta_t(\ell)$ in accordance with \eqref{eq:observer_vector_field}. Once all velocities have been assigned to all points, we update the point estimates with these velocities using some positive gain $K$.
% see Alg. \ref{alg:observer_update}. 

\subsection{Results}

\begin{figure}[h!] 
\begin{tabular}{ccc} 
\subfloat[Actual Scene]{\includegraphics[width = 4cm]{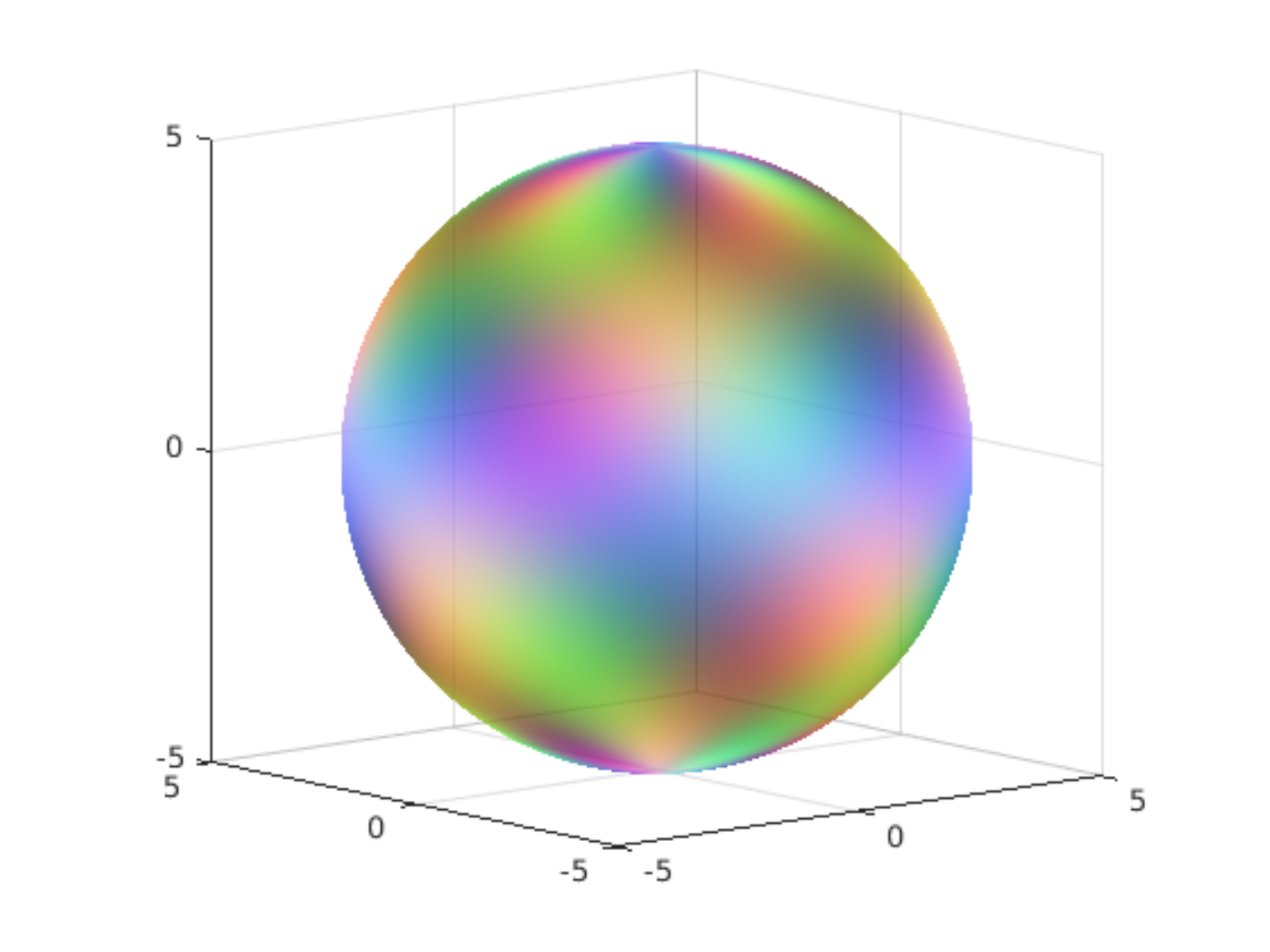}} &
\subfloat[Final Scene Estimate]{\includegraphics[width = 4cm]{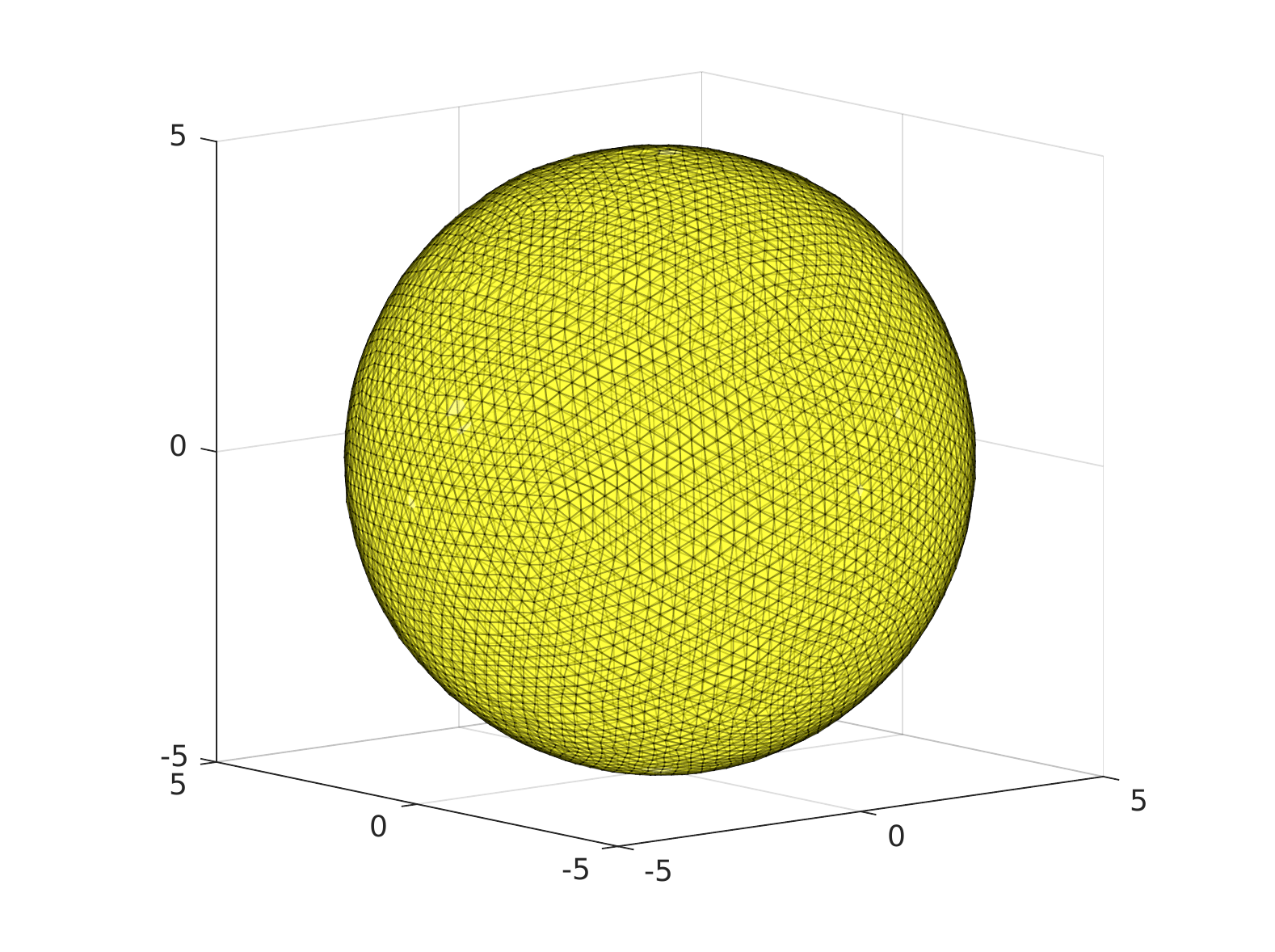}} 
\end{tabular}
\caption{Actual scene with colour map (left), and final scene estimate at frame 5000 (right).} \label{fig:Scene_Comparison}
\end{figure}

\begin{figure}[ht!] 
\includegraphics[width = 8.5cm,trim={4cm 12.8cm 4cm 12.8cm},clip]{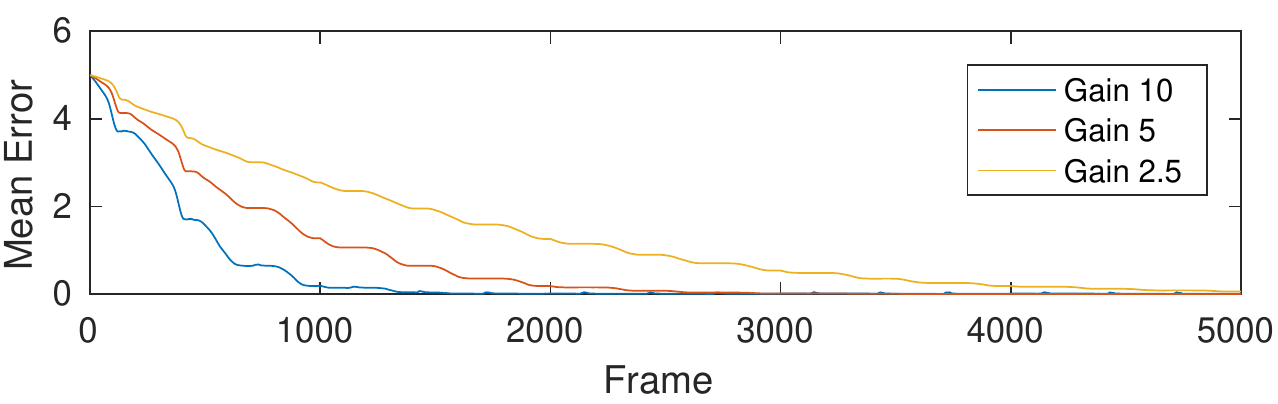}
\caption{Transient response of the average distance of each point estimate from the scene for various gains up to frame 5000.} \label{fig:Error_Graph}
\end{figure}

In this simulation, the scene is a sphere and colour was assigned to every point on its surface based as a function of its Euclidean coordinates in $\mathbf{O}$. 

The camera followed a path determined by a Lissajous figure and was made to always face outwards from the sphere. This path ensured that each point on the scene is viewed from slightly different perspectives multiple times, which assists with minimising the accumulation of numerical error which may occur from using the same frame multiple times. A practical application that allows essentially free design of camera trajectories is 3D scanning of environments for the purpose of  map or model building. In the following simulation, the camera follows such a trajectory lasting 5000 frames.  

The initial scene estimate is given by a surface generated from subdividing the faces of an icosahedron \cite{Wright2014a}. 
The total error graph in Fig. \ref{fig:Error_Graph} shows that with a well chosen gain the observer converges to the scene with a small steady-state error after around 2000 frames, which corresponds to 10--20 iterative updates of each point of the scene. The total error of a scene estimate is given here as the sum of the squares of the distances of each vertex on the scene estimate to the actual scene. 

Since the field of view of the camera is small compared to the total area of the scene, a large number of frames are required in order to ensure convergence of the entire scene. A comparison of the scene shown side-by-side with the real scene is given in Fig. \ref{fig:Scene_Comparison}. 

%\begin{figure}[h!] 
%\begin{tabular}{ccc} 
%\subfloat[Mean Error]{\includegraphics[width = 4cm]{}} &
%\subfloat[Point Estimates with Actual Scene]{\includegraphics[width = 4cm]{}} 
%\end{tabular}
%\caption{Mean error of point estimates (right), and point estimates overlayed with actual scene.} \label{fig:Checkerbox_Results}
%\end{figure}
%
%A seperate simulation was also conducted using a scene with a more challenging colour map. In Fig \ref{fig:Checkerbox_Results}, we show point estimates of a scene with a checkerboard texture. In this scene, there are large areas with no texture. By viewing point estimates from perspectives which positioned them in front of or behind the regions of the scene with texture, these points were updated. Choice of trajectory and gain tuning becomes more critical in scenes which have large regions of no texture. 

Choice of gain and camera trajectories were seen to be important factors 
when running the proposed algorithm on more challenging scenes.
Too large a gain can result in overshoot, causing point estimates to oscillate or diverge, whereas too small a gain results in very slow convergence. 
A necessary condition for practical convergence of each point estimate to the scene appears to be that each point on the scene is repeatedly updated and repeatedly viewed from different perspectives, including perspectives that increase the visual contrast in a neighbourhood of the point. The first part of this statement is also corroborated by the conditions needed for the convergence proof in Appendix \ref{sec:proof}, cf. Assumption \ref{assume:persistence}.

\section{Conclusion} \label{Conclusion}
In this paper, we develop an observer that uses known camera trajectories and light-field measurements to produce estimates of depth maps. This observer design is based on the internal model principle. The proposed observer exploits the concept of plenoptic cameras as continuous sets of pinhole cameras to derive an innovation term given by the gradient of an integral error. The asymptotic convergence of the observer error to zero is proven for scenes satisfying some basic assumptions. The correctness of the observer algorithm is illustrated using a simulation of a simple scene. 
Future work includes experimentation with different, more robust error functions and experimentation with actual light-field video camera data. 

\appendices
\section{Proof of Convergence} \label{sec:proof}

\subsection{Cone Geometry} \label{subsec:cone_geometry}

\begin{figure}[ht!]
\centering
\includegraphics[width=3.0in]{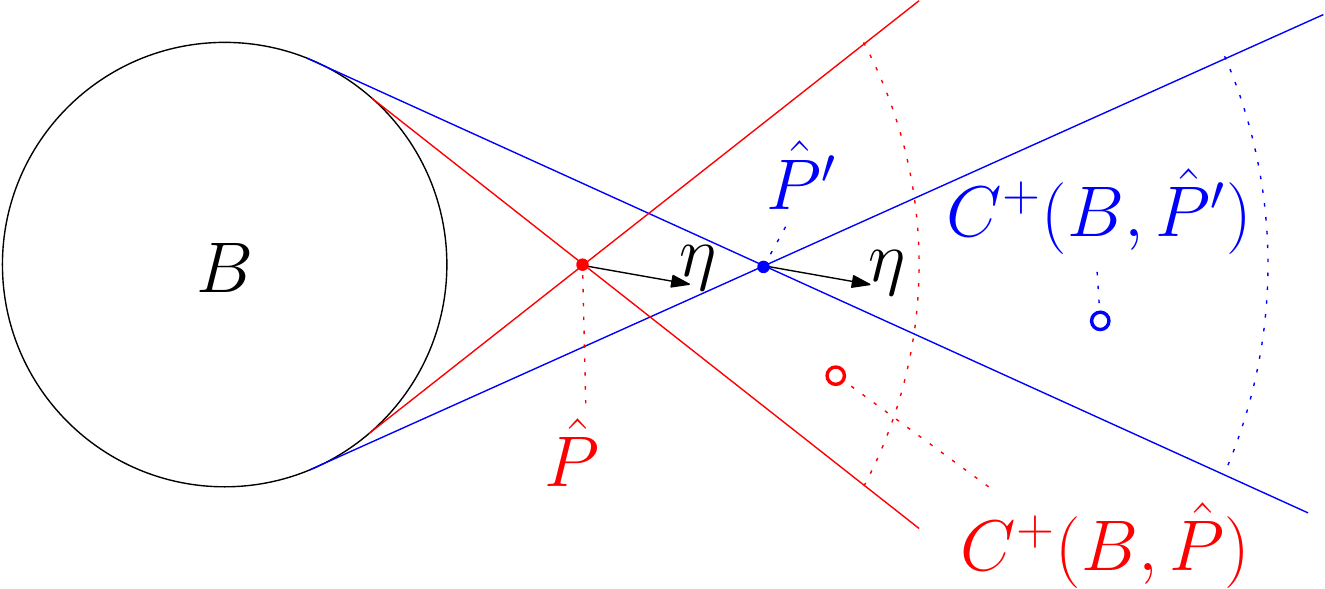}
\caption{A Planar cut through $B$ that contains both $\hat{P}$ and $\hat{P}'$.}
\label{fig:Sub-cones}
\end{figure}

\begin{proposition} \label{prop:cone_reversal}
Let $B$ be a bounded convex set and $\hP\not\in\text{cl}(B)$. Then $\hP' \in C^+(B,\hP)$ if and only if $\hP \in C^-(B,\hP')$.
\end{proposition}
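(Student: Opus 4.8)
The plan is to observe that both membership conditions describe the same collinearity of $\hP$, $\hP'$, and a witness point $\mathbf{x}' \in B$, differing only in the sign and magnitude of a single scalar multiplier. The crucial point is that one witness $\mathbf{x}' \in B$ can be reused in both directions of the equivalence; only the scalar parameter needs to be converted. I would therefore prove each implication by an explicit reparametrisation, using the reciprocal relation $\b = \a/(1+\a)$, equivalently $\a = \b/(1-\b)$.

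For the forward implication, suppose $\hP' \in C^+(B,\hP)$, so there are $\mathbf{x}' \in B$ and $\a > 0$ with $\hP' - \hP = \a(\hP - \mathbf{x}')$. Adding $\hP - \mathbf{x}'$ to both sides gives $\hP' - \mathbf{x}' = (1+\a)(\hP - \mathbf{x}')$. I would then verify directly that $\hP - \hP' = -\b(\hP' - \mathbf{x}')$ holds with $\b := \a/(1+\a)$, and that $0 < \b < 1$ precisely because $\a > 0$. Since $\hP \not\in \text{cl}(B)$ by hypothesis, $\hP$ meets the exclusion condition in the definition of $C^-$, and hence $\hP \in C^-(B,\hP')$ with the same witness $\mathbf{x}'$.

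The reverse implication is symmetric. Assuming $\hP \in C^-(B,\hP')$ supplies $\mathbf{x}' \in B$ and $0 < \b < 1$ with $\hP - \hP' = -\b(\hP' - \mathbf{x}')$; solving for $\hP' - \hP$ in terms of $\hP - \mathbf{x}'$ yields $\hP' - \hP = \a(\hP - \mathbf{x}')$ with $\a := \b/(1-\b) > 0$, giving $\hP' \in C^+(B,\hP)$ (the requirement $\hP \not\in B$ for this cone again following from $\hP \not\in \text{cl}(B)$).

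The argument is elementary algebra, so there is no serious obstacle; the only care needed is bookkeeping. I would make sure the parameter ranges match up correctly under the reparametrisation, namely that $\a > 0$ is exactly equivalent to $0 < \b < 1$ via $\b = \a/(1+\a)$, and that the side conditions in the two cone definitions --- that $C^+(B,\hP)$ requires $\hP \not\in B$ and that elements of $C^-(B,\cdot)$ must lie in $\R^3 \setminus \text{cl}(B)$ --- are each discharged by the single standing hypothesis $\hP \not\in \text{cl}(B)$. A planar cut through $B$ containing $\hP$ and $\hP'$, as in Fig.~\ref{fig:Sub-cones}, makes the common betweenness order $\mathbf{x}', \hP, \hP'$ transparent and confirms the computation geometrically.
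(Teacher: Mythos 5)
Your proof is correct and follows essentially the same route as the paper's: both directions are handled by the same algebraic reparametrisation, with the forward scalar $\a/(1+\a)\in(0,1)$ and reverse scalar $\b/(1-\b)>0$ matching the paper's computation exactly. Your additional care in discharging the side conditions ($\hP\not\in B$ for $C^+$ and $\hP\in\R^3\setminus\text{cl}(B)$ for $C^-$) via the hypothesis $\hP\not\in\text{cl}(B)$ is a minor refinement the paper leaves implicit.
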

\begin{proof}
If $\hP' \in C^+(B,\hP)$ then there exists an $\mathbf{x}' \in B$ and an $\a > 0$ such that $\hP'-\hP = \a(\hP-\mathbf{x}')$ which implies $\hP-\hP' = \frac{-\a}{1+\a} (\hP'-\mathbf{x}')$ and hence $\hP \in C^-(B,\hP')$.
Conversely, if $\hP \in C^-(B,\hP')$ then there exists an $\mathbf{x}' \in B$ and an $0<\a<1$ such that $\hP-\hP' = -\a(\hP'-\mathbf{x}')$ which implies $\hP'-\hP = \frac{\a}{1-\a} (\hP-\mathbf{x}')$ and hence $\hP' \in C^+(B,\hP)$.
\end{proof}

\begin{proposition} \label{prop:cones_generated_from_elements_are_subcones}
Let $B$ be an open ball. 
If $\hP' \in C^+(B,\hP)$ then $\text{cl} (C^+(B,\hP')) \subset C^+(B,\hP)$. 
Furthermore, if $\hP'\in C^+(B,\hP)$ and $\hP' + \eta \in C^+(B,\hP')$ then $\hP + \eta \in C^+(B,\hP)$.
If $\hP' \in C_0^+(B,\hP)$ then $C^+(B,\hP') \subset C^+(B,\hP)$.
\end{proposition}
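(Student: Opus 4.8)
The plan is to reduce all three statements to a single algebraic ``composition'' computation together with the convexity of $B$, treating the closure in the first statement as a limiting refinement of that computation. I would begin with the third (non-closure) claim, since it carries the core calculation. Writing the hypothesis $\hP' \in C^+(B,\hP)$ as $\hP' = (1+\a)\hP - \a\mathbf{x}'$ for some $\mathbf{x}' \in B$, $\a > 0$, and an arbitrary $\hP'' \in C^+(B,\hP')$ as $\hP'' = (1+\b)\hP' - \b\mathbf{x}''$ for some $\mathbf{x}'' \in B$, $\b > 0$, I would substitute the first into the second and collect terms. This yields $\hP'' - \hP = \gamma(\hP - \mathbf{x})$ with $\gamma = \a + \b + \a\b > 0$ and $\mathbf{x} = \tfrac{(1+\b)\a\,\mathbf{x}' + \b\,\mathbf{x}''}{\a + \b + \a\b}$. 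The crucial observation is that the coefficients of $\mathbf{x}'$ and $\mathbf{x}''$ are positive and sum to $1$, so $\mathbf{x}$ is a convex combination of points of $B$; convexity then gives $\mathbf{x} \in B$, hence $\hP'' \in C^+(B,\hP)$. The remaining apex case $\hP' = \hP$ (allowed by $C_0^+$) is immediate.

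For the second statement I would run the same substitution on the displacement $\eta$ directly. Writing $\eta = \b(\hP' - \mathbf{x}'')$ from $\hP' + \eta \in C^+(B,\hP')$ and substituting $\hP' = (1+\a)\hP - \a\mathbf{x}'$ gives $\eta = \gamma(\hP - \mathbf{x})$ with $\gamma = \b(1+\a) > 0$ and $\mathbf{x} = \tfrac{\a\mathbf{x}' + \mathbf{x}''}{1+\a}$, again a convex combination of $\mathbf{x}',\mathbf{x}'' \in B$. Thus $\hP + \eta = \hP + \gamma(\hP - \mathbf{x}) \in C^+(B,\hP)$, which is exactly the claim. Both of these steps use only that $B$ is convex.

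The real work is in the first (closure) statement, and this is where openness of the ball $B$ enters. I would first identify the closure explicitly. Since any point of $C^+(B,\hP)$ is separated from $\text{cl}(B)$ (a one-line norm estimate for balls shows $|y - c| > r$ for every $y$ in the cone, where $c,r$ are the centre and radius of $B$), we have $\hP' \notin \text{cl}(B)$, so $\text{dist}(\hP',\text{cl}(B)) > 0$. Consequently, along any convergent sequence $\hP' + \a_n(\hP' - \mathbf{x}''_n)$ in $C^+(B,\hP')$ the scaling $\a_n$ stays bounded and the base points $\mathbf{x}''_n$ lie in the compact set $\text{cl}(B)$; passing to a subsequence shows $\text{cl}(C^+(B,\hP')) \subseteq C_0^+(\text{cl}(B),\hP')$. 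Hence every $z \in \text{cl}(C^+(B,\hP'))$ is either $\hP'$ itself or of the form $\hP' + \a(\hP' - \mathbf{x}'')$ with $\a > 0$ and $\mathbf{x}'' \in \text{cl}(B)$, now allowing $\mathbf{x}''$ on the boundary $\p B$. I would then re-run the composition formula of the third statement with this $\mathbf{x}''$, obtaining the same $\gamma > 0$ and base point $\mathbf{x}$ as a \emph{strict} convex combination of $\mathbf{x}' \in B$ and $\mathbf{x}'' \in \text{cl}(B)$.

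The main obstacle is precisely this boundary case: a convex combination of two points of $\text{cl}(B)$ can escape the open ball, so the plain convexity argument no longer suffices. The resolution is the elementary fact that a strict convex combination of an interior point and a closed-ball point is interior, since $|t(\mathbf{x}' - c) + (1-t)(\mathbf{x}'' - c)| < r$ whenever $t \in (0,1)$, $|\mathbf{x}' - c| < r$ and $|\mathbf{x}'' - c| \le r$. As the coefficient of the interior point $\mathbf{x}'$ in the composition formula lies strictly in $(0,1)$, this places $\mathbf{x}$ in the open ball $B$, giving $z \in C^+(B,\hP)$; the trivial apex case $z = \hP' \in C^+(B,\hP)$ is handled separately. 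This completes the inclusion $\text{cl}(C^+(B,\hP')) \subset C^+(B,\hP)$.
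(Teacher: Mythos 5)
Your proof is correct, and it takes a genuinely different route from the paper. The paper's own argument is only a \emph{sketch}: it pictures a planar cut through $B$ containing $\hP$ and $\hP'$, argues that the opening angles of $C^+(B,\hP')$ are strictly smaller than those of $C^+(B,\hP)$ to get the closure statement, obtains the translation statement by sliding the subcone to the apex $\hP$, and then derives the third statement from the first. You invert this logical order and replace the angle-based geometric intuition with an explicit algebraic composition: substituting $\hP' = (1+\a)\hP - \a\mathbf{x}'$ into $\hP'' = (1+\b)\hP' - \b\mathbf{x}''$ and observing that the resulting base point is a convex combination of $\mathbf{x}',\mathbf{x}''$ proves the third statement (and the second) using only convexity of $B$, while the closure statement follows from a refinement of the same computation: a compactness argument identifies $\text{cl}(C^+(B,\hP'))$ inside $C_0^+(\text{cl}(B),\hP')$ (using that the cone stays a positive distance from $\text{cl}(B)$, so the scalings along a convergent sequence are bounded), and the boundary case $\mathbf{x}'' \in \p B$ is absorbed by the elementary fact that a strict convex combination of an interior point and a closed-ball point is interior. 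What your approach buys is a complete, checkable proof where the paper offers only a visual sketch, plus the observation that two of the three statements need nothing more than convexity of $B$; what the paper's approach buys is brevity and geometric transparency, at the cost of leaving the closure claim (the one place where openness of the ball genuinely matters, as you correctly isolate) essentially unproven.
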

\begin{proof}[Sketch of Proof]
Picture a planar cut through $B$ that contains both $\hP$ and $\hP'$ (see Fig. \ref{fig:Sub-cones}) and note that $C^+(B,\hP)$ is on the opposite site of $\hP$ to $B$. Since $\hP'$ is inside the open cone $C^+(B,\hP)$, the opening angles of $C^+(B,\hP')$ are strictly smaller than those of $C^+(B,\hP)$ and the first result follows. 
Translating the cone $C^+(B,\hP')$ to $C^+(B,\hP') - \hP' + \hP$ results in a cone with apex $\hP$ which has smaller opening angles than $C^+(B,\hP)$ and is therefore a subset of it, giving the second result.
The third result follows from the first observing that $C^+(B,\hP') = C^+(B,\hP)$ if $\hP'=\hP$. 
\end{proof}

\begin{proposition} \label{prop:negative_subcone_lemma}
Let $B$ be an open ball. If $\hP \in C^-(B,\hP')$ then $C^-(B,\hP) \subset C^-(B,\hP')$.
\end{proposition}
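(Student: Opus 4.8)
The plan is to reduce this statement about negative half-cones to the already-established nesting property of positive half-cones, using the reversal identity of Proposition \ref{prop:cone_reversal} as the bridge. The guiding observation is that, by Proposition \ref{prop:cone_reversal}, membership in a negative cone is equivalent to an ``incoming'' positive-cone relation: for admissible apexes, $A_0 \in C^-(B,A)$ if and only if $A \in C^+(B,A_0)$. Before invoking this I would record that the hypothesis $\hP \in C^-(B,\hP')$ forces both apexes outside $\text{cl}(B)$: indeed $\hP \notin \text{cl}(B)$ by definition of $C^-$, and writing $\hP = (1-\a)\hP' + \a\mathbf{x}'$ as a strict convex combination of $\hP'$ and a point $\mathbf{x}' \in B$ shows that $\hP' \in \text{cl}(B)$ would drag $\hP$ into $\text{cl}(B)$ by convexity, a contradiction; hence $\hP' \notin \text{cl}(B)$ too. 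This licenses every application of Proposition \ref{prop:cone_reversal} below.

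Next I would fix an arbitrary $\hat{Q} \in C^-(B,\hP)$ and chase the reversed relations. By Proposition \ref{prop:cone_reversal}, $\hat{Q} \in C^-(B,\hP)$ gives $\hP \in C^+(B,\hat{Q})$, and the hypothesis $\hP \in C^-(B,\hP')$ gives $\hP' \in C^+(B,\hP)$. Since $\hP \in C^+(B,\hat{Q}) \subset C^+_0(B,\hat{Q})$, the third assertion of Proposition \ref{prop:cones_generated_from_elements_are_subcones} (applied with apex $\hat{Q}$) yields $C^+(B,\hP) \subset C^+(B,\hat{Q})$. Combining these, $\hP' \in C^+(B,\hP) \subset C^+(B,\hat{Q})$, and one final application of Proposition \ref{prop:cone_reversal} turns $\hP' \in C^+(B,\hat{Q})$ back into $\hat{Q} \in C^-(B,\hP')$. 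As $\hat{Q}$ was arbitrary, $C^-(B,\hP) \subset C^-(B,\hP')$.

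As a self-contained alternative (which also uses only convexity of $B$, not that it is a ball), I would argue directly. Writing $\hP = (1-\a)\hP' + \a\mathbf{x}_0$ with $\mathbf{x}_0 \in B$, $\a \in (0,1)$, and an arbitrary $\hat{Q} \in C^-(B,\hP)$ as $\hat{Q} = (1-\b)\hP + \b\mathbf{x}_1$ with $\mathbf{x}_1 \in B$, $\b \in (0,1)$, substitution gives $\hat{Q} = (1-\a)(1-\b)\hP' + (1-\b)\a\mathbf{x}_0 + \b\mathbf{x}_1$. Setting $\mu = (1-\a)(1-\b) \in (0,1)$, the remaining terms equal $(1-\mu)\mathbf{x}_2$ where $\mathbf{x}_2$ is a convex combination of $\mathbf{x}_0,\mathbf{x}_1 \in B$, hence $\mathbf{x}_2 \in B$; thus $\hat{Q} - \hP' = -(1-\mu)(\hP' - \mathbf{x}_2)$ with $1-\mu \in (0,1)$, and since $\hat{Q} \notin \text{cl}(B)$ this is exactly the defining condition for $\hat{Q} \in C^-(B,\hP')$.

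The only real care-point in either route is the bookkeeping of the apex-not-in-$\text{cl}(B)$ hypotheses needed to invoke Proposition \ref{prop:cone_reversal}, together with keeping straight the direction of each reversal. Once the preliminary observation $\hP,\hP' \notin \text{cl}(B)$ is in place, the remainder is a short containment chase (reversal route) or a one-line convex-combination identity (direct route); I expect the reversal route to be the intended argument, since it reuses Propositions \ref{prop:cone_reversal} and \ref{prop:cones_generated_from_elements_are_subcones} essentially verbatim.
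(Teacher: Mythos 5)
Your proposal is correct, but it takes a genuinely different route from the paper. The paper settles this proposition with a two-line geometric sketch: take a planar cut through $B$ containing both apexes, note that $C^-(B,\hP)$ and $C^-(B,\hP')$ share the spherical base $B$, and argue that since $\hP$ lies inside the open cone $C^-(B,\hP')$ the opening angles of $C^-(B,\hP)$ are strictly larger, so it ``touches the spherical base inside'' $C^-(B,\hP')$ --- an argument that is pictorial, leans on $B$ being a ball, and is explicitly labelled only a sketch. Both of your routes replace this picture with formal deductions. The reversal route composes Proposition \ref{prop:cone_reversal} with the third claim of Proposition \ref{prop:cones_generated_from_elements_are_subcones}, and your preliminary check that $\hP,\hP'\not\in\text{cl}(B)$ (plus $\hat{Q}\not\in\text{cl}(B)$ automatically, from the definition of $C^-$) is precisely the apex bookkeeping needed to invoke the reversal identity legitimately; this stays entirely within the paper's toolkit but makes rigorous what the paper only draws. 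The direct route is the strongest of the three: from $\hP=(1-\a)\hP'+\a\mathbf{x}_0$ and $\hat{Q}=(1-\b)\hP+\b\mathbf{x}_1$ you obtain $\hat{Q}-\hP'=-(1-\mu)(\hP'-\mathbf{x}_2)$ with $\mu=(1-\a)(1-\b)\in(0,1)$ and $\mathbf{x}_2\in B$ by convexity, which together with $\hat{Q}\not\in\text{cl}(B)$ is exactly the definition of $\hat{Q}\in C^-(B,\hP')$. That argument is fully rigorous, needs only boundedness and convexity of $B$ rather than spherical symmetry, and hence proves a slightly more general statement than the proposition as stated --- a genuine improvement over the paper's proof rather than merely a reformulation of it.
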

\begin{proof}[Sketch of Proof]
Picture a planar cut through $B$ that contains both $\hP$ and $\hP'$ (see Fig. \ref{fig:Sub-cones}) and note that both $C^-(B,\hP)$ and $C^-(B,\hP')$ are bounded by the spherical base $B$. Since $\hP$ is inside the open cone $\hP \in C^-(B,\hP')$, the opening angles of $C^-(B,\hP)$ are strictly larger than those of $C^-(B,\hP')$ and hence the cone $C^-(B,\hP)$ touches the spherical base inside $C^-(B,\hP')$. The result follows.
\end{proof}

\begin{figure}[ht!]
\centering
\includegraphics[width=3.0in]{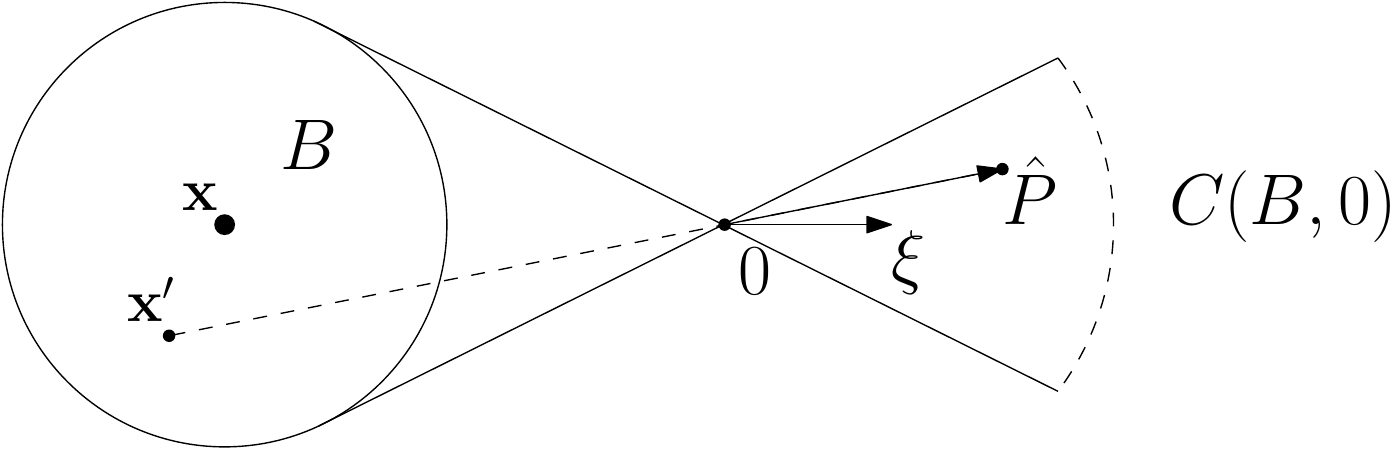}
\caption{A cone generated by $B_r(\mathbf{x})$ through $0$. There is a scalar $1 > c > 0$ and a unit vector $\xi$ through the centre axis of the cone for which the dot product of any $\hP \in C^+(B,0)$ with $\xi$ is at least $c \dnorm{\hP}$. }
\label{fig:Cone_dot_product}
\end{figure}

\begin{proposition} \label{propB2}
Suppose $0 \not\in B_r(\mathbf{x})$. There exists a $c \in (0,1)$ such that  $C^+(B_r(\mathbf{x}),0) = \{ \hP \in \R^3 : -\hP \cdot \mathbf{x} > c \dnorm{\hP} \dnorm{\mathbf{x}} \}$, see Fig \ref{fig:Cone_dot_product}.
\end{proposition}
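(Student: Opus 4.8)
The plan is to reduce membership in the half-cone to an elementary ray--ball intersection problem, solve that problem, and then read off the constant $c$. First I would unfold the definition of $C^+(B_r(\mathbf{x}),0)$ with apex $\hP=0$: by definition $\hP'\in C^+(B_r(\mathbf{x}),0)$ exactly when there is an $\mathbf{x}'\in B_r(\mathbf{x})$ and an $\a>0$ with $\hP'=\a(0-\mathbf{x}')=-\a\mathbf{x}'$. Writing $\mathbf{x}'=-\hP'/\a$ and letting $\a$ range over $(0,\infty)$, this says precisely that the open ray $\{\,t(-\hP'):t>0\,\}$ emanating from the origin in the direction $-\hP'$ meets the open ball $B_r(\mathbf{x})$. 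So the whole proposition becomes the statement that a ray from the origin strikes $B_r(\mathbf{x})$ if and only if its direction makes a small enough angle with $\mathbf{x}$.

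Next I would solve this ray--ball intersection. Setting $\hat w=-\hP'/\dnorm{\hP'}$, the ray $\{t\hat w:t>0\}$ meets $B_r(\mathbf{x})$ iff the quadratic $\dnorm{t\hat w-\mathbf{x}}^2=t^2-2t(\hat w\cdot\mathbf{x})+\dnorm{\mathbf{x}}^2<r^2$ admits a positive root. Equivalently, in geometric terms, the set of unit directions from the origin that strike the ball is the open circular cone of half-angle $\theta_0=\arcsin\!\big(r/\dnorm{\mathbf{x}}\big)$ about the axis $\mathbf{x}/\dnorm{\mathbf{x}}$, so the condition is $\cos\angle(\hat w,\mathbf{x})>\cos\theta_0=\sqrt{1-r^2/\dnorm{\mathbf{x}}^2}$. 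Substituting $\cos\angle(\hat w,\mathbf{x})=(-\hP'\cdot\mathbf{x})/(\dnorm{\hP'}\dnorm{\mathbf{x}})$ and clearing denominators yields exactly $-\hP'\cdot\mathbf{x}>c\,\dnorm{\hP'}\dnorm{\mathbf{x}}$ with $c=\sqrt{1-r^2/\dnorm{\mathbf{x}}^2}$; and since $0\notin B_r(\mathbf{x})$ forces $\dnorm{\mathbf{x}}\ge r$ we obtain $c\in(0,1)$, with $c>0$ as soon as $0\notin\text{cl}(B_r(\mathbf{x}))$, which is the intended non-degeneracy.

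The step I expect to require the most care is the passage from the full line to the ray. The line through the origin in direction $\hat w$ meets the ball iff its distance $\dnorm{\mathbf{x}}\sin\angle(\hat w,\mathbf{x})$ to the centre is less than $r$, but the ray meets it only if in addition $\hat w\cdot\mathbf{x}>0$, so that the near intersection occurs at positive $t$. I would verify that because $0\notin B_r(\mathbf{x})$ the distance from the origin to the centre is at least $r$, so whenever $\hat w\cdot\mathbf{x}\le 0$ the ray moves monotonically away from $\mathbf{x}$ and never enters the ball; this is exactly what collapses the two conditions $\sin\angle(\hat w,\mathbf{x})<r/\dnorm{\mathbf{x}}$ and $\hat w\cdot\mathbf{x}>0$ into the single half-angle bound $\angle(\hat w,\mathbf{x})<\theta_0$. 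The remaining algebra (solving the quadratic and simplifying $\cos\theta_0$) is routine.
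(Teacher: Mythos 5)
Your proof is correct and takes essentially the same route as the paper's: both unfold the definition of $C^+(B_r(\mathbf{x}),0)$ into a ray--ball intersection problem, solve the resulting quadratic (the paper via its discriminant condition), and read off $c=\sqrt{1-r^2/\dnorm{\mathbf{x}}^2}$, using the sign condition $-\hP\cdot\mathbf{x}>0$ to pass from the full line to the ray. Your observation that $c>0$ genuinely requires $0\notin\text{cl}(B_r(\mathbf{x}))$ rather than merely $0\notin B_r(\mathbf{x})$ is a fine point the paper glosses over by simply asserting $r<\dnorm{\mathbf{x}}$.
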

\begin{proof}
If $\hP \in C^+(B_r(\mathbf{x}),0)$ then $\hP \neq 0$ because by definition $C^+(B_r(\mathbf{x}),0)$ is open and does not contain its apex. Hence the statement that $\hP \in C^+(B_r(\mathbf{x}),0)$ is equivalent to stating the existence of a line segment passing from $\hP$ through $0$ which intersects $B_r(\mathbf{x})$. This is equivalent to stating that $\norm{\frac{\hP}{\dnorm{\hP}} \cdot \mathbf{x}}^2 - \dnorm{\mathbf{x}}^2 + r^2 > 0$, 
and so $\frac{\norm{\hP \cdot \mathbf{x}}^2}{\dnorm{P}^2 \dnorm{\mathbf{x}}^2} > 1 - \frac{r^2}{\dnorm{\mathbf{x}}^2}$. Letting $c^2 = 1 - \frac{r^2}{\dnorm{\mathbf{x}}^2}$, noting that $r < \dnorm{\mathbf{x}}$ and observing that by definition $ - \hP \cdot \mathbf{x} > 0$, the conclusion follows.
\end{proof}

\begin{proposition} \label{prop:cone_diameter}
Let $C$ be a right-angled cone with base radius $b$ and height $h$. Let $x$ be the apex of the cone. Then $C \subset B_\rho(x)$ where $\rho = 2 \sqrt{b^2 + h^2}$.
\end{proposition}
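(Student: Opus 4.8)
The plan is to reduce everything to a single distance bound: the point of $C$ farthest from the apex $x$ lies on the rim of the base disc, at slant height $\sqrt{b^2+h^2}$, and this is already at most $\rho = 2\sqrt{b^2+h^2}$, so the stated factor of two is comfortable slack. First I would fix coordinates with $x$ at the origin and the axis of the right circular cone along a unit vector $\hat{a}$, so that the base disc lies in the plane at axial distance $h$. Every $p \in C$ then decomposes uniquely as $p - x = t\,\hat{a} + w$ with $w \perp \hat{a}$, where $t \in [0,h]$ is the axial coordinate and $r := \dnorm{w}$ is the radial coordinate; by the defining linear profile of a right cone, $r \le (b/h)\,t \le b$.

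Next I would simply estimate $\dnorm{p-x}^2 = t^2 + r^2 \le h^2 + b^2$, so that $\dnorm{p-x} \le \sqrt{b^2+h^2} \le \rho$, and since $p \in C$ was arbitrary this gives $C \subset B_\rho(x)$.

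There is no genuine obstacle here; the only point requiring a little care is to fix the interpretation of ``right-angled cone'' as a solid right circular cone (axis orthogonal to the base), which is what justifies the linear radial bound $r \le (b/h)t$, and to remember that $C$ includes the full base disc rather than just its rim. If one prefers to avoid coordinates, the factor of two appears transparently from the triangle inequality routed through the base centre $o$: for any $p\in C$ one has $\dnorm{p-x} \le \dnorm{p-o} + \dnorm{o-x}$, and since the axial displacement of $p$ from $o$ is at most $h$ and its radial displacement at most $b$, both summands are bounded by $\sqrt{b^2+h^2}$, yielding $\dnorm{p-x}\le 2\sqrt{b^2+h^2}=\rho$.
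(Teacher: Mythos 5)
Your proof is correct, and it takes a genuinely different (if equally elementary) route from the paper's. The paper's proof is a sketch that reduces to two dimensions: take a planar cut through the central axis, obtain an isosceles triangle with corners at the apex $x$ and the two endpoints of the base diameter, and write each point of the triangle as a convex combination of those corners; the triangle inequality applied to that convex combination bounds the distance from $x$ by the slant height. You instead stay in 3D and use the orthogonal axial--radial decomposition $p - x = t\,\hat{a} + w$ together with Pythagoras, getting $\dnorm{p-x}^2 = t^2 + r^2 \le h^2 + b^2$ directly. Both arguments deliver the sharp bound $\sqrt{b^2+h^2}$ and expose the stated $\rho = 2\sqrt{b^2+h^2}$ as slack by a factor of two; your version has the advantage of making the sharp constant explicit without invoking the reduction to a planar section (which in the paper's sketch implicitly relies on the rotational symmetry of the cone), while your alternative route through the base centre $o$ shows where a factor of two arises naturally from the triangle inequality. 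One pedantic point worth noting: $B_\rho(x)$ is an open ball, so you need the strict inequality $\dnorm{p-x} < \rho$; your bound $\dnorm{p-x} \le \sqrt{b^2+h^2}$ supplies it precisely because of the factor of two (for a nondegenerate cone $\sqrt{b^2+h^2} < 2\sqrt{b^2+h^2}$), so the apparent slack is not entirely idle.
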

\begin{proof} [Sketch of proof]
This follows from taking a planar cut of the cone containing its central axis, resulting in an isosceles triangle, and representing points in this triangle as a convex sum of the corners. 
\end{proof}
%\begin{proof}
%First note that the diameter of a triangle $T$, defined as $\sup_{x,y \in T} \dnorm{x - y}$, is the less than or equal to the sum of the lengths of any two sides. For proof of this give the vertices of $T \subset \R^2$ coordinates $v_1$, $v_2$ and $v_3$, and assume without loss of generality that $v_3 = (0,0)$. Now note that for any $x,y \in T$, $x = x_1 v_1 + x_2 v_2$ and $y = y_1 v_1 + y_2 v_2$ where $x_1, x_2, y_1, y_2 \geq 0$ and $x_1 + x_2 = y_1 + y_2 = 1$. 
%Therefore, $\dnorm{x - y} = \dnorm{(x_1 - y_1) v_1 + (x_2 - y_2) v_2} \leq \dnorm{v_1} + \dnorm{v_2}.$ 
%
%The cone $C$ is symmetrical with respect to rotations around the centre axis $\sigma$ passing through the centre of the cone's base to its apex.
%Note that every cross-section of the cone containing $\sigma \cap C$ is an isosceles triangle with the same height $h$ and base $2 b$. Every point in such a triangle is within distance $\delta = 2 \sqrt{b^2 + h^2}$ of the apex of the triangle. Therefore, if $C$ were not a subset of $B_\delta(x)$, then for some cross-section of the cone containing $\sigma$, we would have that if there were a point in the cross-section further than $\delta$ from the apex, which is a contradiction. Therefore, $C \subset B_\delta(x)$.
%\end{proof}

\subsection{Error Function} \label{subsec:error_function}

In the following, we prove that for each lenslet $\ell$, the local error function $\eps(\hat{\Delta},\ell)$ defined by \eqref{eq:photometric_error} has a unique minimum at $\hat{\Delta} = \Delta$, where $\Delta$ is the true distance of the scene in direction $\eta(\ell)$ and the first argument of $\eps$ is restricted to $(\Delta_\text{min},\infty)$. 

\begin{lemma} \label{lem:error_minima}
Let $\eps$ be the error function defined by \eqref{eq:photometric_error}. Let $\Delta$ be the true distance of the scene $\Omega$ in direction $\eta(\ell)$. 
Then $\eps(\Delta,\ell) = 0$ and if $\Delta_\text{min} < \hat{\Delta}_1 < \hat{\Delta}_2 < \Delta$ or $\Delta_\text{min} < \Delta < \hat{\Delta}_2 < \hat{\Delta}_1$, we have that $\eps(\hat{\Delta}_1,\ell) > \eps(\hat{\Delta}_2,\ell) > 0$. 
\end{lemma}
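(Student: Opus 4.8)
My plan is to reduce both assertions to a single pointwise monotonicity statement about where refracted rays meet the scene, and then push that through Assumptions~\ref{assume:increasing_colour_diff} and \ref{assume:convexity}. First I would dispose of the vanishing claim $\eps(\Delta,\ell)=0$. When $\hat{\Delta}=\Delta$ the estimated image point $\iota(\Delta\eta(\ell))$ is the true image point, so by the defining property of $\iota$ stated after \eqref{eq:Image_Map} every ray through a lenslet $\ell'\in W(\Delta,\ell)$ that meets this image point refracts to pass through the true scene point $P=\Delta\eta(\ell)$. The Lambertian brightness map then makes each such ray carry the colour $\b(P)$, which is exactly the colour of the central ray $L(\ell,p_\ell)$; hence $e(\ell',\Delta,\ell)=0$ for every $\ell'$ in the window, and the integral, and therefore $\eps(\Delta,\ell)$, vanishes. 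Note that the prefactor is finite and positive here because the restriction $\Delta>\Delta_{\text{min}}$ keeps $\hat{Q}^z$ bounded away from the value that would make $1+D/\hat{Q}^z$ singular.

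For the monotonicity I would first absorb the normalising prefactor into the integral by a change of variables. By the similar-triangles structure behind the derivation of $\phi$ in \eqref{eq:Lenslet_Projection}, the window $W(\hat{\Delta},\ell)$ is the image of a fixed reference window under a linear scaling whose linear factor is proportional to $1+D/\hat{Q}^z$; consequently its area scales like $(1+D/\hat{Q}^z)^2$, and the Jacobian of the reparametrisation exactly cancels the factor $(1+D/\hat{Q}^z)^{-2}$ in \eqref{eq:photometric_error} (this is precisely the compensation the prefactor is designed to provide). This lets me rewrite $\eps(\hat{\Delta},\ell)=\int_{W_0}\dnorm{\b(P)-\b(x(\hat{\Delta},w))}^2\,dw$ over a \emph{fixed} domain $W_0$, where $w$ parametrises an object-space ray through the estimated point $\hat{P}=\hat{\Delta}\eta(\ell)$ and $x(\hat{\Delta},w)\in\Omega$ is the first point at which that ray meets the scene. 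The entire $\hat{\Delta}$-dependence is now carried by the integrand, pointwise in $w$.

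The heart of the argument is then the claim that, for each fixed $w$, the distance $\dnorm{x(\hat{\Delta},w)-P}$ is strictly increasing as $\hat{\Delta}$ moves away from $\Delta$ on either side. Here I would use convexity of $\Omega$ (Assumption~\ref{assume:convexity}): as $\hat{P}$ slides along the central line away from the true point $P$, the ray of the pencil indexed by $w$ sweeps monotonically across the convex surface, so its intersection point $x(\hat{\Delta},w)$ can only recede from $P$. The two cases $\Delta_{\text{min}}<\hat{\Delta}_1<\hat{\Delta}_2<\Delta$ and $\Delta_{\text{min}}<\Delta<\hat{\Delta}_2<\hat{\Delta}_1$ correspond to approaching the surface from inside $\Sigma$ and from behind it, and convexity handles both. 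Given this, Assumption~\ref{assume:increasing_colour_diff} upgrades the distance inequality $\dnorm{x(\hat{\Delta}_1,w)-P}>\dnorm{x(\hat{\Delta}_2,w)-P}$ to the colour inequality $\dnorm{\b(P)-\b(x(\hat{\Delta}_1,w))}>\dnorm{\b(P)-\b(x(\hat{\Delta}_2,w))}$ for every $w$, so the integrands are ordered pointwise and integration gives $\eps(\hat{\Delta}_1,\ell)>\eps(\hat{\Delta}_2,\ell)$. Strict positivity $\eps(\hat{\Delta}_2,\ell)>0$ then follows because for any $\hat{\Delta}_2\neq\Delta$ a positive-measure set of rays meets $\Omega$ at points distinct from $P$, which by Assumption~\ref{assume:increasing_colour_diff} (with $x_2=P$) contribute a strictly positive colour difference.

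The step I expect to be the main obstacle is the convexity-based pointwise monotonicity: I must show that for every ray in the pencil the scene-intersection point genuinely recedes from $P$ (not merely moves), uniformly enough and consistently on both sides of $\Delta$, while correctly accounting for occlusion, since the camera records the nearest intersection of each ray with $\Omega$. A secondary technical point is verifying that the single reparametrisation simultaneously fixes the integration measure and preserves the ray identity used in the monotonicity claim, so that the Jacobian cancellation and the pointwise comparison are compatible.
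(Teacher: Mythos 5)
Your proposal is correct and follows essentially the same route as the paper's proof: the paper likewise reparametrises the window integral over the fixed focus-lens disc $Z$ via the perspective projection through the estimated image point (whose Jacobian $\left(1+D/\hat{Q}^z\right)^2$ cancels the prefactor), and then compares integrands pointwise using Assumptions~\ref{assume:increasing_colour_diff} and \ref{assume:convexity} together with the same convexity fact you flag as the main obstacle — a fact the paper also asserts without detailed proof. Your treatment of $\eps(\Delta,\ell)=0$ and of strict positivity is, if anything, slightly more explicit than the paper's.
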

\begin{proof} 
Denote $P=\Delta\eta(\ell)$ and $Q=\iota(P)$, see \eqref{eq:Image_Map}.
Firstly, if $\hat{\Delta} = \Delta$, then transforming the integral in \eqref{eq:photometric_error} through the inverse projection map transforms the window $W(\hat{\Delta},\ell)$ to a single point $\hat{P} = P$ on the scene, and so the error is $0$ in this case. 

Let $Z$ denote the focus lense, which is a disc of radius $A$ (where $A$ is the aperture) normal to $\nu$. If $\hat{Q}$ is an image point estimate, and $\z \in Z$, then let $\pi_{\hat{Q}}(\z)$ denote the perspective projection of the point $\z$ through $\hat{Q}$ onto the pupilar plane $\Lambda$. 

Note that $\pi_{\hat{Q}}(\z) = \z + \frac{D}{\hat{Q}^z} \left( \z - \hat{Q} \right)$, where $\hat{Q}^z = \hat{Q} \cdot \nu$. Therefore, $\norm{\det D \pi_{\hat{Q}}(\z)}$ only depends on $\hat{Q}$ and is given by $\norm{\det D \pi_{\hat{Q}}(\z)} = \left( 1 + \frac{D}{\hat{Q}^z} \right)^2.$ 

 Now, consider $\eps(\hat{\Delta}_1, \ell) - \eps(\hat{\Delta}_2,\ell)$, and note that in either case we have that $\norm{\hat{\Delta}_1 - \Delta} > \norm{\hat{\Delta}_2 - \Delta}$. Then we have that
\begin{align*}
& \eps(\hat{\Delta}_1, \ell) - \eps(\hat{\Delta}_2,\ell) \\
= &\ \int_{W(\hat{\Delta}_1,\ell)} e(\ell',\hat{\Delta}_1,\ell) \left( 1 + \frac{D}{\hat{Q}_1^z} \right)^{-2} d \ell' \\
&\ - \int_{W(\hat{\Delta}_2,\ell)} e(\ell',\hat{\Delta}_2,\ell) \left( 1 + \frac{D}{\hat{Q}_2^z} \right)^{-2} d \ell' \\ 
= &\ \ \ \int_{Z} \dnorm{\b(P) - \b(\pi^{-1}_{\hP_1}(\z))}^2 d \zeta \\
 & - \int_{Z} \dnorm{\b(P) - \b(\pi^{-1}_{\hP_2}(\z))}^2 d \zeta \\
 > &\ 0.
\end{align*}

Here we have used Assumptions \ref{assume:increasing_colour_diff} and \ref{assume:convexity} and the fact that if the scene is convex then the further a point estimate $\hP$ is from the scene, the further the projection of a point on the focus lens through $\hP$ will be from the true point $P$. 
\end{proof}

\subsection{Point Trajectories} \label{subsec:Point_Trajectories}

The first observation is that if $\hP\in\Omega$ is a point on the scene then $v_\t(\hP)=0$ for all $t$ by Lemma \ref{lem:error_minima}.
This means that $\hP_t=\hP$ for all $t$ is a trajectory of \eqref{eq:Observer_Estimate}, and hence $\hP_t=\hP$ for \emph{some} $t$ implies $\hP_t=\hP$ for \emph{all} $t$ because solutions of \eqref{eq:Observer_Estimate} are assumed to be unique. 

The following result additionally states that if the point estimate lies in $\Sigma$ for some time $t$, it stays in $\Sigma$ for all future times, and if it lies in $\Sigma^c=\R^3\setminus\text{cl}(\Sigma)$ it stays there. 

\begin{proposition} \label{propC2b}
If $\hP_t \in \Omega$ then $\hP_\t \in \Omega$ for all $\t$.
If $\hP_t \in \Sigma$ then $\hP_\t \in \Sigma$ for all $\t\geq t$.
If $\hP_t \in \Sigma^c$ then $\hP_\t \in \Sigma^c$ for all $\t\geq t$.
\end{proposition}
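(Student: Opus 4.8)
The plan is to handle the three claims separately, letting the first do the heavy lifting for the other two. The first claim is essentially the content of the paragraph preceding the proposition: if $\hP_t\in\Omega$, then $\hP_t$ is the point at the true distance in direction $\eta_t(\ell_t)$, so Lemma \ref{lem:error_minima} forces $\nabla_1\eps(\hP_t\cdot\eta_t(\ell_t),\ell_t)=0$ and hence $v_\t(\hP_t)=0$ for every $\t$. Thus the constant curve $\t\mapsto\hP_t$ solves \eqref{eq:Observer_Estimate}, and by the assumed uniqueness of solutions the trajectory agreeing with it at one time agrees with it at all times, giving $\hP_\t=\hP_t\in\Omega$ for all $\t$.

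For the forward invariance of $\Sigma$ and $\Sigma^c$, I would first record the decomposition $\R^3=\Sigma\sqcup\Omega\sqcup\Sigma^c$, which holds because $\Sigma$ is open, $\Omega=\partial\Sigma$, and $\Sigma^c=\R^3\setminus\text{cl}(\Sigma)$; in particular $\Sigma$ and $\Sigma^c$ are both open with common topological boundary $\Omega$. The key observation is that, since $\t\mapsto\hP_\t$ is absolutely continuous and hence continuous, it cannot pass from one of the open sets $\Sigma$, $\Sigma^c$ into the other without first meeting the barrier $\Omega$.

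Making this precise, suppose $\hP_t\in\Sigma$ but $\hP_s\notin\Sigma$ for some $s>t$, and set $\t^*:=\inf\{\t\geq t:\hP_\t\notin\Sigma\}$. Because $\Sigma$ is open we have $\t^*>t$, and because $\hP_\t\in\Sigma$ for $\t\in[t,\t^*)$ while $\hP_{\t_n}\notin\Sigma$ for some sequence $\t_n\downarrow\t^*$, continuity of the trajectory yields $\hP_{\t^*}\in\text{cl}(\Sigma)\cap(\R^3\setminus\Sigma)=\Omega$. Applying the first claim at the time $\t^*$ shows the trajectory through $\hP_{\t^*}\in\Omega$ is constant, so $\hP_t=\hP_{\t^*}\in\Omega$, contradicting $\hP_t\in\Sigma$ since the union is disjoint. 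Hence no such $s$ exists and $\hP_\t\in\Sigma$ for all $\t\geq t$. The argument for $\Sigma^c$ is identical after replacing $\Sigma$ by the open set $\Sigma^c$ and using that its boundary is likewise $\Omega$.

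The only genuine obstacle is the legitimacy of invoking uniqueness at the crossing time $\t^*$ despite the discontinuous right-hand side of $v_\t$; this is exactly what the standing existence-and-uniqueness assumption on \eqref{eq:Observer_Estimate} supplies, so the barrier argument goes through. The remaining work — the first-exit-time bookkeeping and the identification of the exit point as lying on $\Omega$ — is routine point-set topology.
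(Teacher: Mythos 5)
Your proposal is correct and follows essentially the same route as the paper: the first claim via $v_\t(\hP)=0$ on $\Omega$ (Lemma \ref{lem:error_minima}) together with uniqueness of solutions, and the invariance of $\Sigma$ and $\Sigma^c$ by a continuity/barrier argument in which any crossing of $\Omega$ propagates backward via the first claim to contradict the initial condition. Your first-exit-time bookkeeping is merely a more explicit version of the paper's ``there exists an $s\in[t,\t]$ with $\hP_s\in\Omega=\p\Sigma$'' step, so no further comparison is needed.
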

\begin{proof}
We have already shown the first statement at the beginning of Section \ref{subsec:Point_Trajectories}.
Assume $\hP_t \in \Sigma$ and assume for a contradiction that $\hP_\t\not\in\Sigma$ for some $\t>t$. Because $\hP$ as defined by \eqref{eq:Observer_Estimate} is continuous, there exists an $s \in [t,\t]$ such that $\hP_s\in \Omega = \p \Sigma$. By the first statement it follows that $\hP_{s'}\in\Omega$ for all $s'$, a contradiction to $\hP_t\in\Sigma$.
The case $\hP_t \in \Sigma^c$ follows from a similar argument. 
\end{proof}

The goal of the remainder of this section is to establish that if a point estimate $\hP$ with initial condition $\hP_0 \in \Sigma$ has a limit point $Q$, then that limit point cannot be in $C^+(\mathbf{B},\hP_0)\cap\Sigma$. A similar statement holds for the case where $\hP_0 \in \Sigma^c$ with the obvious modifications to all the intermediate statements and proofs.

In subsection \ref{subsubsec:point_estimates}, we investigate general properties which must be true of any solution of \eqref{eq:Observer_Estimate} with $\hP_0\in\Sigma$. 
In subsection \ref{subsubsec:accumulation_points_are_limits} we show that every accumulation point of the trajectory $\hP$ is a limit point. 
In \ref{subsubsec:limit_contradiction} we establish that the assumption that the limit point of the trajectory $\hP$ is in $C^+(\mathbf{B},\hP_0)\cap\Sigma$ results in a contradiction.   

\subsubsection{Properties of Point Estimates} \label{subsubsec:point_estimates}

We begin by investigating the time set $T(\hP_0)$ from Definition \ref{def:TP0}.
\begin{proposition}\label{prop:open}
$T(\hP_0)$ is open. 
\end{proposition}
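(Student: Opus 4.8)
The plan is to show that $T(\hP_0)$ is open by verifying that every $t_0 \in T(\hP_0)$ admits an open interval of times around it that is also contained in $T(\hP_0)$. Recall that membership $t \in T(\hP_0)$ requires two conditions simultaneously: the visibility condition $\hP_t \cdot \nu_t > 0$, and the projection condition $\pi_{\mathbf{x}_t}(\hP_t) \in \Lambda^*_t$. I would treat $T(\hP_0)$ as the intersection of the two sets of times satisfying each condition, and argue that each of these sets is open; the intersection of two open sets is then open.

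First I would record the available continuity. The solution $t \mapsto \hP_t$ of \eqref{eq:Observer_Estimate} is absolutely continuous, hence continuous, and by Assumption~\ref{assume:camera_contained} the camera pose $X_t$ is continuous in $t$, so the optical centre $\mathbf{x}_t$, the rotation $R_t$, and the facing direction $\nu_t$ all vary continuously. Consequently $t \mapsto \hP_t \cdot \nu_t$ is continuous, and the visibility condition $\hP_t \cdot \nu_t > 0$ defines an open set of times; in particular there is an interval around $t_0$ on which the point estimate stays strictly in front of the camera.

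On that interval the perspective projection $\pi_{\mathbf{x}_t}(\hP_t)$ onto the pupilar plane is well defined, because $\hP_t \cdot \nu_t > 0$ is exactly the condition that the line through $\hP_t$ and $\mathbf{x}_t$ meets the pupilar plane transversally (it is the quantity appearing in the denominator of the projection formula). To deal with the fact that $\Lambda^*_t$ itself moves with $t$, I would pass to the body-fixed frame $\mathbf{C}$: there the pupilar plane and the subset $\Lambda^*$ are fixed, with $\Lambda^*$ open relative to the plane by hypothesis. Expressing $\hP_t$ in body coordinates through the continuous rigid motion $X_t$ and projecting through the fixed optical centre produces a curve $t \mapsto q(t)$ in the fixed pupilar plane that depends continuously on $t$, the projection being a continuous function of the point and the pose away from the degenerate configuration excluded by $\hP_t \cdot \nu_t > 0$. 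The projection condition then reads $q(t) \in \Lambda^*$ with $\Lambda^*$ a fixed open set, so it is the preimage of an open set under a continuous map and is therefore open.

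Intersecting the two open sets yields an open neighbourhood of $t_0$ contained in $T(\hP_0)$, which establishes the claim. The only genuine subtlety is the projection condition, precisely because $\Lambda^*_t$ is time-varying; the device of passing to body-fixed coordinates, where $\Lambda^*$ is fixed and open, reduces it to the continuity of a single curve together with the elementary fact that preimages of open sets under continuous maps are open. I would also emphasise that the visibility condition does double duty here: it both furnishes one of the two open conditions and guarantees that the projection map is non-degenerate, hence continuous, on the relevant interval.
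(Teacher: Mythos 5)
Your proof is correct and takes essentially the same route as the paper's: pass to the body-fixed frame $\mathbf{C}$, where $\Lambda^*$ is a fixed, relatively open set, and use continuity of $X_t$, of the solution $\hP_t$, and of the perspective projection to conclude that membership in $T(\hP_0)$ is an open condition on $t$. If anything, you are slightly more thorough than the paper, whose proof handles only the projection condition and leaves the visibility condition $\hP_t \cdot \nu_t > 0$ implicit, whereas you correctly note that it both defines an open set of times and is exactly what makes the projection well defined and continuous on that set.
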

\begin{proof}
Let $t>0$ and express the point estimate $\hP_t$ in frame $\mathbf{C}$ as ${}^\mathbf{C} \hP_t$. Let ${}^\mathbf{C} \pi_0$ be the perspective projection of points in front of the camera through the optical centre expressed in frame $\mathbf{C}$ (in which it has constant coordinates $0$) onto the pupilar plane ${}^\mathbf{C} \Lambda$ which is constant in the frame $\mathbf{C}$, as is ${}^\mathbf{C} \Lambda^*$. 
Then, ${}^\mathbf{C} \hP_t = X^{-1}_t \hP_t$, which is continuous with respect to $t$ since $\hP_t$ and $X_t$ are, the latter by Assumption~\ref{assume:camera_contained}. Since ${}^{\mathbf{C}} \pi_0$ is continuous, ${}^{\mathbf{C}} \pi_0({}^\mathbf{C} \hP_t)$ is continuous with respect to $t$, and if ${}^{\mathbf{C}} \pi_0({}^\mathbf{C} \hP_t) \in {}^\mathbf{C} \Lambda^*$, there is a time interval $(a_t,b_t)$ containing $t$ such that ${}^{\mathbf{C}} \pi_0 ({}^\mathbf{C} \hP_\t) \in {}^\mathbf{C} \Lambda^*$ for all $\t \in (a_t,b_t)$. Now, $T(\hP_0) = \bigcup_{t \in T(\hP_0)} (a_t,b_t)$ which is open.
\end{proof}

The following proposition shows that for $t\in T(\hP_0)$ the vector field in \eqref{eq:Observer_Estimate} points into the interior of a cone with apex $\hP_t$ spanned by the ball $\mathbf{B}$ from Assumption \ref{assume:camera_contained}.
\begin{proposition} \label{propC0}
Let $t \in T(\hP_0)$ and $\hP_t \in \Sigma$ and $\hP_t \in \mathbf{B}$, where $\mathbf{B}$ is from Assumption \ref{assume:camera_contained}. Then $\hP_t + \dhP_t \in C^+(\mathbf{B},\hP_t)$. 
\end{proposition}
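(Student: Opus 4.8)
The plan is to exploit the fact that the hypothesis $\hP_t\in\mathbf{B}$ makes the half-cone degenerate, so that the conclusion reduces to a nonvanishing statement for $\dhP_t$. Although $C^+(\cdot,\cdot)$ is introduced with the apex outside its spanning set, reading its defining set-builder formula with the apex $\hP_t$ in the interior of the open ball $\mathbf{B}$ gives $C^+(\mathbf{B},\hP_t)=\R^3\setminus\{\hP_t\}$. First I would verify this: for an arbitrary $w\neq 0$, the point $\mathbf{x}':=\hP_t-sw$ lies in $\mathbf{B}$ for all sufficiently small $s>0$ because $\hP_t$ is interior to $\mathbf{B}$, and then $(\hP_t+w)-\hP_t=w=s^{-1}(\hP_t-\mathbf{x}')$ exhibits $\hP_t+w$ as an element of $C^+(\mathbf{B},\hP_t)$ with $\a=s^{-1}>0$. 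Since $w$ ranges over all nonzero vectors and the cone never contains its apex, equality with $\R^3\setminus\{\hP_t\}$ follows.

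Consequently $\hP_t+\dhP_t\in C^+(\mathbf{B},\hP_t)$ is equivalent to $\dhP_t\neq 0$, and this is all that remains. Because $t\in T(\hP_0)$, the remark after Definition~\ref{def:TP0} gives $\dhP_t=-\nabla_1\eps(\hP_t\cdot\eta_t(\ell_t),\ell_t)\,\eta_t(\ell_t)$ with $\ell_t=\pi_{\mathbf{x}_t}(\hP_t)\in\Lambda^*_t$; as $\eta_t(\ell_t)$ is a unit vector, $\dhP_t\neq 0$ amounts to the scalar $\nabla_1\eps(\hP_t\cdot\eta_t(\ell_t),\ell_t)$ being nonzero. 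Here I would use $\hP_t\in\Sigma$: since $\Sigma$ is open and $\Omega=\p\Sigma$, we have $\hP_t\notin\Omega$, and because $\hP_t$ lies along the viewing ray in direction $\eta_t(\ell_t)$ from the optical centre, its not being on the scene means the distance estimate $\hat{\Delta}:=\hP_t\cdot\eta_t(\ell_t)$ differs from the true distance $\Delta:=\gamma(\ell_t)$. Lemma~\ref{lem:error_minima} tells us that $\eps(\cdot,\ell_t)$ has its unique minimum value $0$ at $\Delta$ and is strictly monotone on either side, and I would use this to conclude $\nabla_1\eps(\hat\Delta,\ell_t)\neq 0$, hence $\dhP_t\neq 0$.

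The hard part is this last implication: strict monotonicity alone does not forbid an isolated critical point, so I would not argue the gradient is nonzero from monotonicity directly. Instead I would appeal to the integral representation of $\eps$ over the focus lens $Z$ obtained in the proof of Lemma~\ref{lem:error_minima}, $\eps(\hat\Delta,\ell_t)=\int_Z\dnorm{\b(P)-\b(\pi^{-1}_{\hP}(\z))}^2\,d\z$ with $P=\Delta\eta_t(\ell_t)$, where for $\hat\Delta\neq\Delta$ Assumptions~\ref{assume:increasing_colour_diff} and \ref{assume:convexity} make every integrand colour difference nonzero and monotone in $\hat\Delta$, so the derivative of the integral carries a definite sign and cannot vanish. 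A secondary technical point is the domain restriction: Lemma~\ref{lem:error_minima} is stated for first argument in $(\Delta_{\text{min}},\infty)$, whereas a point in $\mathbf{B}$ could be so near the camera that $\hat\Delta\leq\Delta_{\text{min}}$, so I would supply a short geometric argument from Assumption~\ref{assume:camera_contained} — whose near-camera cone out to $\Delta_{\text{min}}$ is contained in $\mathbf{B}$ — to place $\hat\Delta$ in the admissible range before invoking the lemma.
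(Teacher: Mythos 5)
There is a genuine problem here, and it starts with your reading of the hypothesis. The condition ``$\hP_t \in \mathbf{B}$'' in the statement is evidently a typographical error for ``$\hP_t \not\in \mathbf{B}$'': the definition of $C^+(B,\hP)$ explicitly requires the apex to lie outside the spanning set (so $C^+(\mathbf{B},\hP_t)$ is not even well-defined under the literal hypothesis), and the proposition's only downstream use, in the proof of Prop.~\ref{propC1}, invokes it under the hypotheses $t\in T(\hP_0)$, $\hP_t\in\Sigma$ and $\hP_t \not\in \mathbf{B}$. Your entire argument is built on the degenerate reading, and so proves a statement that cannot serve its role in the paper. Even on its own terms the degeneracy computation is off: with the apex inside the open ball you may take $\mathbf{x}' = \hP_t \in \mathbf{B}$ in the set-builder formula, which exhibits the apex itself as a member, so the formula yields all of $\R^3$ rather than $\R^3\setminus\{\hP_t\}$ --- making the claim vacuously true and your whole $\dhP_t \neq 0$ analysis moot. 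Your $\Delta_{\text{min}}$ patch also points the wrong way: Assumption~\ref{assume:camera_contained} places the sub-$\Delta_{\text{min}}$ portion of the viewing cone \emph{inside} $\mathbf{B}$, so $\hP_t \in \mathbf{B}$ gives no lower bound on the estimated depth; it is precisely $\hP_t \not\in \mathbf{B}$, together with $t \in T(\hP_0)$, that forces $\hat{\Delta} = \hP_t \cdot \eta_t(\ell_t) > \Delta_{\text{min}}$ and legitimises the appeal to Lemma~\ref{lem:error_minima}.

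The substantive content of the proposition is directional, not a nonvanishing statement, and that content is absent from your proposal. The paper's proof runs: since $t \in T(\hP_0)$, $\dhP_t = -\nabla_1 \eps(\hP_t \cdot \eta_t(\ell_t),\ell_t)\,\eta_t(\ell_t)$ with $\ell_t = \pi_{\mathbf{x}_t}(\hP_t) \in \Lambda^*_t$; since $\hP_t \in \Sigma$ the depth is underestimated ($\hat{\Delta} < \Delta$), so by Lemma~\ref{lem:error_minima} the error is strictly decreasing there and $\nabla_1\eps < 0$, whence $\dhP_t$ is a \emph{positive} multiple of $\eta_t(\ell_t)$. Because the viewing ray through $\hP_t$ passes through the optical centre $\mathbf{x}_t \in \mathbf{B}$, one has $\eta_t(\ell_t)$ proportional to $\hP_t - \mathbf{x}_t$, so $\hP_t + \eta_t(\ell_t) \in C^+(\mathbf{B},\hP_t)$ by taking $\mathbf{x}' = \mathbf{x}_t$ in the definition; since $C^+(\mathbf{B},\hP_t)$ is a cone, $\hP_t + h\,\dhP_t \in C^+(\mathbf{B},\hP_t)$ for all $h > 0$, in particular $h = 1$. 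One point in your favour: your observation that strict monotonicity alone does not rule out an isolated critical point is fair --- the paper asserts $\nabla_1\eps < 0$ directly from Lemma~\ref{lem:error_minima}, and your integral-representation sign argument over the lens $Z$ is essentially what would be needed to close that soft spot --- but this is ancillary to the geometric step the proposition exists to provide, which your proof never establishes.
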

\begin{proof}
Let $\ell_t = \pi_{\mathbf{x}_t}(\hP_t)$ then $\dhP_t = - \nabla_1 \eps(\hP_t \cdot \eta_t(\ell_t),\ell_t) \eta_t(\ell_t)$ and $\nabla_1 \eps(\hP_t \cdot \eta_t(\ell_t),\ell_t) < 0$ by Lemma \ref{lem:error_minima}. Therefore, $\dhP_t$ is a positive multiple of $\eta_t(\ell_t)$ in this case and $\hP_t + \eta_t(\ell_t) \in C^+(\mathbf{B},\hP_t)$ implies $\hP_t + h \dhP_t \in C^+(\mathbf{B},\hP_t)$ for all $h > 0$ as $C^+(\mathbf{B},\hP_t)$ is a cone. 
\end{proof}

The following proposition gives the existence of some time interval $(t,t+\epsilon)$ for which the trajectory of a point estimate then remains within the cone $C^+(\mathbf{B},\hP_t)$ for all times within the time interval $(t,t+\epsilon)$. This is important for establishing the existence of a limit point for the trajectory. 

\begin{proposition} \label{propC1}
Let $t \in T(\hP_0)$ and $\hP_t\in\Sigma$ and $\hP_t \not\in \mathbf{B}$, where $\mathbf{B}$ is from Assumption \ref{assume:camera_contained}. Then there exists an $\eps>0$ such that $\hP_{t+h}\in C^+(\mathbf{B},\hP_t)\cap\Sigma$ for all $0<h<\eps$.
\end{proposition}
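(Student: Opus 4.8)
The plan is to show that the velocity assigned to the estimate at time $t$ points strictly into the open cone $C^+(\mathbf{B},\hP_t)$, and then to propagate this into a short interval by continuity, using the integral form of the absolutely continuous solution. First I would dispose of the two parts of the conclusion that come almost for free. Since $\hP_t\in\Sigma$, Proposition \ref{propC2b} gives $\hP_{t+h}\in\Sigma$ for every $h>0$, so the factor $\cap\,\Sigma$ in the conclusion requires no further argument. Moreover, $\hP_s\in\Sigma$ means the estimate lies strictly between the optical centre and the scene along the ray through $\eta_s(\ell_s)$ (single boundary crossing by convexity, Assumption \ref{assume:convexity}), so its distance argument is strictly smaller than the true scene distance; Lemma \ref{lem:error_minima} then forces $\nabla_1\eps<0$, exactly as in the proof of Proposition \ref{propC0}. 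Hence on the relevant interval $\dhP_s=\kappa_s\,\eta_s(\ell_s)$ with $\kappa_s>0$.

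Next I would set up the cone geometry. Since $\hP_t\not\in\mathbf{B}$ and $\mathbf{B}=B_r(0)$, Proposition \ref{propB2}, translated so that the apex sits at $\hP_t$, supplies a constant $c\in(0,1)$ and the axis $\xi=\hP_t/\dnorm{\hP_t}$ with
\[ C^+(\mathbf{B},\hP_t)=\{\hP':\ (\hP'-\hP_t)\cdot\xi>c\,\dnorm{\hP'-\hP_t}\}. \]
Because $\mathbf{x}_t\in\mathbf{B}$ and $\hP_t-\mathbf{x}_t$ is a positive multiple of $\eta_t(\ell_t)$, one has $\hP_t+\eta_t(\ell_t)\in C^+(\mathbf{B},\hP_t)$ as in Proposition \ref{propC0}, which reads $\eta_t(\ell_t)\cdot\xi>c$ \emph{strictly}. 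Since $T(\hP_0)$ is open (Proposition \ref{prop:open}), the dynamics keep the gradient form on some $[t,t+\delta)$, and there $\hP_s$, $X_s$, and hence $\eta_s(\ell_s)$ and $\kappa_s$, all vary continuously. I would therefore pick $\eps\in(0,\delta]$ and $c'\in(c,1)$ so that $\eta_s(\ell_s)\cdot\xi\geq c'$ and $\kappa_s>0$ for all $s\in[t,t+\eps)$.

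Finally I would estimate directly from absolute continuity. For $0<h<\eps$,
\[ \hP_{t+h}-\hP_t=\int_t^{t+h}\kappa_s\,\eta_s(\ell_s)\,ds, \]
so $(\hP_{t+h}-\hP_t)\cdot\xi\geq c'\int_t^{t+h}\kappa_s\,ds$, while $\dnorm{\hP_{t+h}-\hP_t}\leq\int_t^{t+h}\kappa_s\,ds$ by the triangle inequality and $\dnorm{\eta_s(\ell_s)}=1$. As $\kappa_s>0$ the integral is strictly positive, so $\hP_{t+h}\neq\hP_t$ and $(\hP_{t+h}-\hP_t)\cdot\xi\geq c'\dnorm{\hP_{t+h}-\hP_t}>c\,\dnorm{\hP_{t+h}-\hP_t}$, placing $\hP_{t+h}$ in $C^+(\mathbf{B},\hP_t)$. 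Combined with $\hP_{t+h}\in\Sigma$ this gives the claim.

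The main obstacle I anticipate is the middle step: arguing that the instantaneous velocity direction stays strictly inside the angular aperture of the \emph{fixed} cone $C^+(\mathbf{B},\hP_t)$, even though each velocity $\eta_s(\ell_s)$ is naturally tied to the \emph{moving} cone $C^+(\mathbf{B},\hP_s)$. The strict inequality $\eta_t(\ell_t)\cdot\xi>c$ at the apex provides the margin, and continuity of $\eta_s(\ell_s)$ converts it into a uniform bound $c'>c$ on a short interval; one must also verify that the estimate genuinely moves ($\int\kappa_s>0$) so that $\hP_{t+h}$ avoids the excluded apex. An alternative to the dot-product estimate would be to feed the first-order displacement into the subcone inclusions of Proposition \ref{prop:cones_generated_from_elements_are_subcones}, but the integral bound seems the most direct route given the absolutely continuous solution concept.
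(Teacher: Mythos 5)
Your proposal is correct, but it reaches the conclusion by a genuinely different mechanism than the paper. The paper's proof is purely pointwise and first-order: it cites Prop.~\ref{propC0} to get $\hP_t + \dhP_t \in C^+(\mathbf{B},\hP_t)$, uses openness of the cone to find $\d>0$ with $B_\d(\hP_t+\dhP_t)\subset C^+(\mathbf{B},\hP_t)$, scales this to $B_{\d h}(\hP_t+h\dhP_t)\subset C^+(\mathbf{B},\hP_t)$ using the cone property, and then needs only differentiability of $\hP$ at the single instant $t$ (justified by openness of $T(\hP_0)$, Prop.~\ref{prop:open}) to conclude $\dnorm{\hP_{t+h}-(\hP_t+h\dhP_t)}<\d h$ for small $h$; the $\Sigma$ part comes from Prop.~\ref{propC2b}, as in your argument. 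You instead work on an interval, combining the dot-product characterization of the cone from Prop.~\ref{propB2}, a uniform angular margin $c'>c$ obtained from continuity of $s\mapsto\eta_s(\ell_s)$, and the integral representation of the absolutely continuous solution; this is essentially the technique the paper saves for Prop.~\ref{prop:subcone_containment}, where an integral estimate against a fixed cone axis is genuinely needed because the time interval there is long. The trade-offs: the paper's route never has to evaluate the vector field at times $s>t$, so it avoids re-verifying the hypotheses of Lemma~\ref{lem:error_minima} along the trajectory, whereas your claim that $\kappa_s>0$ for all $s\in[t,t+\eps)$ silently requires the distance argument to stay above $\Delta_{\text{min}}$ there — this does hold (at time $t$ it follows from $\hP_t\not\in\mathbf{B}$, $t\in T(\hP_0)$ and Assumption~\ref{assume:camera_contained}, and it propagates by continuity), but you should state it. Conversely, your route makes explicit a regularity fact the paper glosses over: for a merely absolutely continuous solution, the derivative $\dhP_t=\lim_{h\ra 0}(\hP_{t+h}-\hP_t)/h$ the paper uses at the specific time $t$ is really obtained from $\frac{1}{h}\int_t^{t+h}v_s(\hP_s)\,ds \ra v_t(\hP_t)$, which rests on exactly the continuity of $s\mapsto v_s(\hP_s)$ that you establish. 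One small caveat: Prop.~\ref{propB2} yields $c\in(0,1)$ only under strict exteriority $\hP_t\not\in\text{cl}(\mathbf{B})$; under the stated hypothesis $\hP_t\not\in\mathbf{B}$ the apex may lie on $\p\mathbf{B}$, where the cone degenerates to an open half-space ($c=0$) — your estimate still goes through with any $c'\in(0,\eta_t(\ell_t)\cdot\xi)$, and the paper commits the same elision in Prop.~\ref{prop:subcone_containment}, but it is worth noting.
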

\begin{proof}
By Prop. \ref{propC0}, $\hP_t + \dhP_t \in C^+(\mathbf{B},\hP_t)$. Since $C^+(B,\hP_t)$ is open, there exists a $\d > 0$ such that $B_\d(\hP_t + \dhP_t) \subset C^+(\mathbf{B},\hP_t)$. But then $B_{\delta h}(\hP_t + h \dhP_t) \subset C^+(\mathbf{B},\hP_t)$ for all $h > 0$ since $C^+(\mathbf{B},\hP_t)$ is a cone. 

As $t$ is in $T(\hP_0)$ and $T(\hP_0)$ is open by Prop. \ref{prop:open}, we have $\dhP_t = \lim_{h \ra 0} \frac{\hP_{t+h} - \hP_t}{h}$. Hence there exists an $\eps > 0$ such that for all $0 < h < \eps$, we have $\dnorm{\hP_{t+h} - (\hP_t + h \dhP_t)} < \delta h$. It follows that $\hP_{t+h} \in C^+(\mathbf{B},\hP_t)$ and by Prop. \ref{propC2b} also $\hP_{t+h}\in\Sigma$ for all $0<h<\eps$. 
\end{proof}

The following proposition uses the previous proposition to produce a stronger result: that for every time $t \in T(\hP_0)$ and every time $\t > t$, the point estimate $\hP_{\t}$ is contained in the cone $C^+(\mathbf{B},\hP_t)$. 

\begin{proposition} \label{prop:subcone_containment}
Let $t \in T(\hP_0)$ and $\hP_t \in \Sigma$ and $\hP_t \not\in \mathbf{B}$ where $\mathbf{B}$ is from Assumption \ref{assume:camera_contained}. Then $\hP_{\t} \in C^+(\mathbf{B}, \hP_t)\cap\Sigma$ for all $\t > t$.
\end{proposition}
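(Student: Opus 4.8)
The plan is to run a continuation (supremum) argument in time, handling the two conclusions separately. The containment $\hP_\t\in\Sigma$ for all $\t>t$ is immediate from Proposition \ref{propC2b} (forward invariance of $\Sigma$), so the real work is in showing $\hP_\t\in C^+(\mathbf{B},\hP_t)$. To this end I would set $\tau^*:=\sup\{\t>t:\hP_s\in C^+(\mathbf{B},\hP_t)\text{ for all }s\in(t,\t)\}$. Proposition \ref{propC1} gives local forward containment just after $t$, so $\tau^*>t$, and the goal becomes $\tau^*=\infty$. Assuming for contradiction that $\tau^*<\infty$, openness of $C^+(\mathbf{B},\hP_t)$ together with continuity of $\hP$ forces $\hP_s\in C^+(\mathbf{B},\hP_t)$ for all $s\in(t,\tau^*)$, while at the endpoint one can only assert $\hP_{\tau^*}\in\text{cl}(C^+(\mathbf{B},\hP_t))$.

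The geometric engine is that the vector field always points \emph{into} the nested family of forward cones. For $s\in T(\hP_0)$ with $\hP_s\in\Sigma$ the dynamics give $\dhP_s=-\nabla_1\eps(\hP_s\cdot\eta_s(\ell_s),\ell_s)\,\eta_s(\ell_s)$ with $-\nabla_1\eps>0$ by Lemma \ref{lem:error_minima}, so $\dhP_s$ is a nonnegative multiple of the outward direction $\eta_s(\ell_s)$ (and is $0$ off $T(\hP_0)$). By the reasoning of Proposition \ref{propC0} this direction satisfies $\hP_s+h\dhP_s\in C^+(\mathbf{B},\hP_s)$ for all $h>0$. For $s\in(t,\tau^*)$ we have $\hP_s\in C^+(\mathbf{B},\hP_t)$, so the third part of Proposition \ref{prop:cones_generated_from_elements_are_subcones} gives $C^+(\mathbf{B},\hP_s)\subset C^+(\mathbf{B},\hP_t)$; extending this inclusion to apexes on the boundary by a closure argument yields $C^+(\mathbf{B},\hP_s)\subset\text{cl}(C^+(\mathbf{B},\hP_t))$ whenever $\hP_s\in\text{cl}(C^+(\mathbf{B},\hP_t))$. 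Consequently $\hP_s+h\dhP_s\in\text{cl}(C^+(\mathbf{B},\hP_t))$ for all $h>0$, i.e. $\dhP_s$ is subtangential to the closed convex cone $\text{cl}(C^+(\mathbf{B},\hP_t))$ at every point of the trajectory.

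With subtangentiality established, invariance of the closed convex cone under the flow shows the trajectory cannot leave $\text{cl}(C^+(\mathbf{B},\hP_t))$; this can be argued either via Nagumo's subtangentiality criterion or, more in the spirit of this paper, by a scalar comparison on the cone's defining inequality from Proposition \ref{propB2}. This alone is consistent with, but does not contradict, $\hP_{\tau^*}\in\text{cl}(C^+(\mathbf{B},\hP_t))$. To upgrade closed containment to the required \emph{open} containment I would fix an intermediate time $s_1\in(t,\tau^*)$, so $\hP_{s_1}\in C^+(\mathbf{B},\hP_t)$, apply the same invariance argument to the shifted closed cone $\text{cl}(C^+(\mathbf{B},\hP_{s_1}))$ to obtain $\hP_{\tau^*}\in\text{cl}(C^+(\mathbf{B},\hP_{s_1}))$, and then invoke the strict nesting of Proposition \ref{prop:cones_generated_from_elements_are_subcones}, namely $\text{cl}(C^+(\mathbf{B},\hP_{s_1}))\subset C^+(\mathbf{B},\hP_t)$. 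This places $\hP_{\tau^*}$ in the open cone; by openness and continuity the containment persists on an interval past $\tau^*$, contradicting maximality. Hence $\tau^*=\infty$, and combined with Proposition \ref{propC2b} we get $\hP_\t\in C^+(\mathbf{B},\hP_t)\cap\Sigma$ for all $\t>t$.

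I expect the main obstacle to be ruling out escape through the \emph{lateral} boundary of the cone exactly at the supremum time $\tau^*$: the vector field only tells us the direction is subtangential, and a curved trajectory could a priori graze the boundary. The argument above rests on two points that need care, namely that the nested inclusion of Proposition \ref{prop:cones_generated_from_elements_are_subcones} persists in the closure (so subtangentiality holds even at boundary apexes), and that strict nesting from an interior apex $\hP_{s_1}$ converts closed-cone invariance into genuinely open containment. I would also note that the apex $\hP_t$ itself is never revisited for $\t>t$, since $\dhP_s$ is a nonnegative multiple of the outward direction $\eta_s(\ell_s)$ and the motion is therefore monotonically away from $\hP_t$ inside the cone; this disposes of the degenerate apex and justifies working with $\hP_s\neq\hP_t$ throughout.
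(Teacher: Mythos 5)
Your overall architecture (exit-time contradiction, velocities forced into nested cones via Propositions \ref{propC0} and \ref{prop:cones_generated_from_elements_are_subcones} and their closure extensions, comparison against the algebraic description of the cone in Proposition \ref{propB2}) is the same as the paper's, and your first stage is sound for a specific reason worth making explicit: on $(t,\tau^*)$ the containment of $\hP_s$ in the \emph{open} cone $C^+(\mathbf{B},\hP_t)$ is definitional (it is how $\tau^*$ is defined), and that is exactly what licenses the a.e.\ velocity constraint $\dhP_s\in\text{cl}\bigl(C^+(\mathbf{B},\hP_t)-\hP_t\bigr)$ needed for the comparison. The genuine gap is in your second stage. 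When you ``apply the same invariance argument to the shifted closed cone $\text{cl}(C^+(\mathbf{B},\hP_{s_1}))$'', the constraint you now need is $\dhP_s\in\text{cl}\bigl(C^+(\mathbf{B},\hP_{s_1})-\hP_{s_1}\bigr)$ for $s\in(s_1,\tau^*)$; but by your own nesting mechanism this is only available once $\hP_s$ is known to lie in $\text{cl}(C^+(\mathbf{B},\hP_{s_1}))$ --- the very statement being proven. On $(s_1,\tau^*)$ you only know $\hP_s\in C^+(\mathbf{B},\hP_t)$, which constrains the velocities to the \emph{larger} direction cone, not the shifted one, so ``the same argument'' does not run. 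Citing Nagumo would discharge this circularity in principle (its hypothesis is on the vector field, not on the trajectory), but the vector field \eqref{eq:observer_vector_field} is discontinuous in time and space and only absolutely continuous solutions are assumed, so the classical theorem does not apply off the shelf; repairing it needs extra machinery (e.g.\ a Gronwall estimate on the distance to the cone using the metric projection) that appears neither in your sketch nor in the paper.

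The missing idea --- and the way the paper resolves exactly the grazing issue you worry about --- is that the shifted cone is unnecessary: strictness can be harvested from the single interior point $\hP_{s_1}$ inside the one comparison you already have. In coordinates with $\hP_t=0$, write $\hP_{\tau^*}=\hP_{s_1}+\int_{s_1}^{\tau^*}\dhP_s\,ds$ and pair with the direction $\xi$ to the centre of $\mathbf{B}$ as in Proposition \ref{propB2}: the point term satisfies the \emph{strict} inequality $-\hP_{s_1}\cdot\xi>c\dnorm{\xi}\dnorm{\hP_{s_1}}$ because $\hP_{s_1}$ lies in the open cone, while the velocity terms only need the non-strict inequality $-\dhP_s\cdot\xi\geq c\dnorm{\xi}\dnorm{\dhP_s}$ (which also absorbs the times with $\dhP_s=0$, sparing you the paper's case analysis on $T(\hP_0)$ near the exit time); the triangle inequality then yields $-\hP_{\tau^*}\cdot\xi>c\dnorm{\xi}\dnorm{\hP_{\tau^*}}$, i.e.\ $\hP_{\tau^*}$ lies in the \emph{open} cone, and your continuity-past-$\tau^*$ contradiction finishes the proof. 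Put abstractly, an open convex cone plus the closure of its own direction cone lands back inside the open cone; this one-line strictness argument is precisely the paper's integral estimate (``note the $>$ sign''), and with it your supremum formulation becomes essentially the paper's proof, whereas as written your upgrade-to-open step does not close.
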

\begin{proof} 
Assume, to arrive at a contradiction, that there exists $\t>t$ with $\hP_{\t} \not\in C^+(\mathbf{B},\hP_t)\cap\Sigma$. 
By Proposition \ref{propC1}, $\hP_{t+h}\in C^+(\mathbf{B},\hP_t)\cap\Sigma$ for $h>0$ sufficiently small. 
Since $\hP$ is continuous in $t$, there is a smallest time $b\in (t,\t)$ such that $\hP_b \in \p (C^+(\mathbf{B},\hP_t)\cap\Sigma)$ and $\hP_s \in C^+(\mathbf{B},\hP_t)\cap\Sigma$ for all $s\in(t,b)$.
By Prop. \ref{propC2b}, $\hP_b\in\Sigma$ and hence $\hP_b\in \p (C^+(\mathbf{B},\hP_t)\cap\Sigma) \cap \Sigma=\p C^+(\mathbf{B},\hP_t)\cap\Sigma$. In particular, $\hP_b\not\in C^+(\mathbf{B},\hP_t)$.

If $b\in T(\hP_0)$ then there exists a nonempty open interval $(a,b)\subset(t,b)$ 
such that $(a,b)\subset T(\hP_0)$ as $T(\hP_0)$ is open by Prop. \ref{prop:open}.
If $b\not\in T(\hP_0)$ then $s\not\in T(\hP_0)$ and therefore $\dhP_s=0$ for all $s\in[b',b]$, where $b'=\sup\{s\in T(\hP_0)\,|\,s<b\}$, and there exists a nonempty open interval $(a,b')\subset (t,b')$ such that $(a,b')\subset T(\hP_0)$.
But then $\hP_{b'}=\hP_{b}\in \p (C^+(\mathbf{B},\hP_t)\cap\Sigma)$ and $b'=b$ as $b$ was minimal. It follows that there exists a nonempty open interval $(a,b)\subset(t,b)$ 
such that $(a,b)\subset T(\hP_0)$ also in this case.

In both cases we then have that there exists a nonempty open interval $(a,b)\subset T(\hP_0)$ such that $\hP_{s} \in C^+(\mathbf{B},\hP_t)\cap\Sigma$ for all $s \in(a,b)$.
By Prop \ref{propC0}, it follows that 
$\hP_{s}+\dhP_{s} \in C^+(\mathbf{B},\hP_{s})$ for all $s \in(a,b)$, and by Prop. \ref{prop:cones_generated_from_elements_are_subcones}, $\hP_t+\dhP_s \in C^+(\mathbf{B},\hP_t)$ for all $s \in(a,b)$. Recall that $\hP_b\not\in C^+(\mathbf{B},\hP_t)$.

For the remainder of the argument we change coordinates such that $\hP_t=0$. This is so we can apply Proposition \ref{propB2}. In the new coordinates $\dnorm{\mathbf{x}} > r > 0$, where $\mathbf{x}$ is the centre of the ball $\mathbf{B}$ of radius $r$, by our assumption that $\hP_t\not\in \mathbf{B}$. We now have $\hP_a \in C^+(\mathbf{B},0)$ and $\dhP_s \in C^+(\mathbf{B},0)$ for all $s \in(a,b)$ but $\hP_b \not\in C^+(\mathbf{B},0)$. Because $\hP$ is absolutely continuous on the interval $[a,b]$ we have:
\begin{align*}
- \hP_b \cdot \xi = &\ - \hP_a \cdot \xi + \int_a^b - \dhP_s \cdot \xi \ ds \\ 
> &\ c \dnorm{\xi} \dnorm{\hP_a} + \int_a^b c \dnorm{\xi} \dnorm{\dhP_s} \ ds \\ 
\geq &\ c \dnorm{\xi} \dnorm{\hP_a} + c \dnorm{\xi} \dnorm{\int_a^b \dhP_s \ ds} \\
= &\ c \dnorm{\xi} \dnorm{\hP_a} + c \dnorm{\xi} \dnorm{\hP_b - \hP_a} \\
\geq &\ c \dnorm{\xi} \dnorm{\hP_b} \\
\end{align*}
which implies $\hP_b \in C^+(\mathbf{B},0)$ by Proposition \ref{propB2} (note the $>$ sign on the second line). This is a contradiction to 
$\hP_b \not\in C^+(\mathbf{B},0)$ and it follows that $\hP_{\tau} \in C^+(\mathbf{B}, \hP_t)\cap\Sigma$ for all $\tau > t$.
\end{proof}

The following two results are the main results of this subsection. 

\begin{proposition} \label{prop:nested_cones}
Let $\hP_t \in \Sigma$ and $\hP_t \not\in \text{cl}(\mathbf{B})$ where $\mathbf{B}$ is from Assumption \ref{assume:camera_contained}. Then $\hP_\t\in C_0^+(\mathbf{B},\hat{P}_t)\cap\Sigma$ and $C^+(\mathbf{B},\hP_\t) \subset C^+(\mathbf{B},\hP_t)$ for all $\t\geq t$. \end{proposition}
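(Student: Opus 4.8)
The proposition has two conclusions, and I would first reduce the second to the first. Since $\mathbf{B}$ is an open ball (Assumption~\ref{assume:camera_contained}), once I know that $\hP_\t \in C_0^+(\mathbf{B}, \hP_t)$, the inclusion $C^+(\mathbf{B}, \hP_\t) \subset C^+(\mathbf{B}, \hP_t)$ is exactly the third statement of Proposition~\ref{prop:cones_generated_from_elements_are_subcones}. The membership $\hP_\t \in \Sigma$ for all $\t \geq t$ is immediate from Proposition~\ref{propC2b}. Hence the whole proposition reduces to the single containment $\hP_\t \in C_0^+(\mathbf{B}, \hP_t)$ for every $\t \geq t$.

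This containment is a strengthening of Proposition~\ref{prop:subcone_containment} in which the hypothesis $t \in T(\hP_0)$ is dropped and the open cone $C^+$ is replaced by $C_0^+$. The plan is to re-run the first-exit argument of Proposition~\ref{prop:subcone_containment} relative to the apex $\hP_t$. First I would dispose of the trivial phase: set $t^* := \inf\{s \geq t : s \in T(\hP_0)\}$, with $t^* = \infty$ if the set is empty. Because $T(\hP_0)$ is open (Proposition~\ref{prop:open}), the velocity vanishes on $[t, t^*)$, so $\hP_s = \hP_t$ there and, by continuity, $\hP_{t^*} = \hP_t$. If $t^* = \infty$ the estimate is forever parked at the apex and the containment is trivial; otherwise I reset the clock to $t^*$, at which the estimate again sits at $\hP_t$.

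Next I would argue by contradiction: suppose $\hP_\t \not\in C_0^+(\mathbf{B}, \hP_t)$ for some $\t > t$. Since $\hP_\t \in \Sigma$, this forces $\hP_\t \neq \hP_t$ and $\hP_\t \not\in C^+(\mathbf{B}, \hP_t)$. Let $b$ be the first time in $(t, \t]$ at which the trajectory leaves $C_0^+(\mathbf{B}, \hP_t)$; by continuity $\hP_b \in \p C^+(\mathbf{B}, \hP_t)$. I would then show the exit cannot occur through the apex, i.e.\ $\hP_b \neq \hP_t$: at the apex the velocity is either zero (inactive times) or, by Proposition~\ref{propC0}, a positive multiple of $\eta_s(\ell_s)$ pointing strictly into $C^+(\mathbf{B}, \hP_t)$, so by the reasoning of Proposition~\ref{propC1} the trajectory cannot escape $C_0^+$ starting from the apex. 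Therefore $\hP_b$ lies on the lateral boundary, and from here the argument is verbatim that of Proposition~\ref{prop:subcone_containment}: locate a nonempty open interval $(a, b) \subset T(\hP_0)$ on which $\hP_s \in C^+(\mathbf{B}, \hP_t) \cap \Sigma$ (passing to $b' = \sup\{s \in T(\hP_0) : s < b\}$ if $b \not\in T(\hP_0)$), use Proposition~\ref{propC0} together with Proposition~\ref{prop:cones_generated_from_elements_are_subcones} to get $\hP_t + \dhP_s \in C^+(\mathbf{B}, \hP_t)$ for $s \in (a,b)$, change coordinates so that $\hP_t = 0$, and apply the dot-product characterisation of Proposition~\ref{propB2} to the absolutely continuous integral $-\hP_b \cdot \xi = -\hP_a \cdot \xi + \int_a^b -\dhP_s \cdot \xi \, ds$. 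The strict inequality coming from $\hP_a \in C^+(\mathbf{B}, 0)$ and $\dhP_s \in C^+(\mathbf{B}, 0)$ then yields $\hP_b \in C^+(\mathbf{B}, 0)$, contradicting $\hP_b \not\in C^+(\mathbf{B}, \hP_t)$.

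The main obstacle is precisely the behaviour at the apex. In Proposition~\ref{prop:subcone_containment} the assumption $t \in T(\hP_0)$ guaranteed, via Proposition~\ref{propC1}, that the trajectory immediately entered the \emph{open} cone, so one could work entirely inside the open set $C^+(\mathbf{B}, \hP_t) \cap \Sigma$. Here the trajectory may rest at the apex for a positive time, and $C_0^+(\mathbf{B}, \hP_t)$ is neither open nor closed; the delicate points are to rule out an exit through the apex and to confirm that the first boundary hit is lateral, so that the strict separation constant $c \in (0,1)$ furnished by Proposition~\ref{propB2} applies. It is exactly to guarantee this strict separation --- $\dnorm{\mathbf{x} - \hP_t} > r$ rather than merely $\geq r$ --- that the hypothesis is the strict $\hP_t \not\in \text{cl}(\mathbf{B})$ rather than $\hP_t \not\in \mathbf{B}$.
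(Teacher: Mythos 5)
Your reduction (second conclusion via Proposition~\ref{prop:cones_generated_from_elements_are_subcones}, $\Sigma$-invariance via Proposition~\ref{propC2b}, then everything resting on $\hP_\t \in C_0^+(\mathbf{B},\hP_t)$), your treatment of the parking phase up to $t^*$, and your lateral-exit argument all match the paper. The gap is in the one step that is genuinely new in this proposition relative to Proposition~\ref{prop:subcone_containment}: ruling out escape at the apex. Your dichotomy only controls the velocity \emph{at} the exit time $b$. If $b \in T(\hP_0)$, Proposition~\ref{propC1} indeed applies. But if $b \not\in T(\hP_0)$ while active times accumulate immediately after $b$ --- which is exactly what happens when the reset time $t^*\not\in T(\hP_0)$, a situation that openness of $T(\hP_0)$ and Assumption~\ref{assume:persistence} do not preclude --- then $v_b(\hP_b)=0$, and a zero derivative at a single instant places no constraint on the trajectory (compare $x(s)=s^2$ at $s=0$: zero derivative, immediate departure). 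The ``reasoning of Proposition~\ref{propC1}'' is unavailable here because it needs a \emph{nonzero} derivative pointing strictly into the open cone together with the difference-quotient approximation, both of which require $b\in T(\hP_0)$. Worse, the velocities at the subsequent active instants $u$ point into $C^+(\mathbf{B},\hP_u)$ with apex at the current position $\hP_u$; to transfer them into $C^+(\mathbf{B},\hP_t)$ via Proposition~\ref{prop:cones_generated_from_elements_are_subcones} you must already know $\hP_u \in C_0^+(\mathbf{B},\hP_t)$ --- precisely what you are trying to prove. Your first-exit scheme is circular at the apex.

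The paper closes this case with a different idea that your proposal is missing. After the parking phase ending at $t'$, for each active time $s$ in an interval $(t',b)\subset T(\hP_0)$ it applies Proposition~\ref{prop:subcone_containment} with $s$ (not $t$) as the start time, and then uses cone reversal (Proposition~\ref{prop:cone_reversal}) together with the nesting of negative cones (Proposition~\ref{prop:negative_subcone_lemma}): if some $\hP_\t$ escaped $C_0^+(\mathbf{B},\hP_{t'})$, then $\hP_{t'} \not\in C^-(\mathbf{B},\hP_\t)$, while every $\hP_{s'}$ with $s' \in (t',s)$ lies in $C^-(\mathbf{B},\hP_s)$, a set whose closure stays a fixed distance $\delta>0$ away from $\hP_{t'}$. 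Hence $\dnorm{\hP_{s'}-\hP_{t'}}\geq\delta$ for all $s'\in(t',s)$, contradicting continuity of $\hP$ at $t'$. In other words, the escape hypothesis is refuted not by a forward first-order argument at the apex (which cannot work there), but by a backward continuity argument anchored at the apex using negative cones. You correctly identified the apex as the main obstacle, but the tool you propose for it does not resolve it; some version of this reversal-plus-continuity argument (or another genuinely new mechanism) is required.
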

\begin{proof}
Clearly $\hP_\t\in C_0^+(\mathbf{B},\hat{P}_t)$ implies $C^+(\mathbf{B},\hP_\t) \subset C^+(\mathbf{B},\hP_t)$ by Prop. \ref{prop:cones_generated_from_elements_are_subcones}, and $\hP_\t\in\Sigma$ for all $\t\geq t$ by Prop. \ref{propC2b}. Hence we only need to prove $\hP_\t\in C_0^+(\mathbf{B},\hat{P}_t)$ for all $\t\geq t$. The case $\tau=t$ is immediate, so let $\tau>t$ for the remainder of the proof.
Let $t\in T(\hP_0)$ then the statement follows from Prop. \ref{prop:subcone_containment}.
Let $t\not\in T(\hP_0)$ then $s\not\in T(\hP_0)$ and therefore $\dhP_s=0$ for all $s\in[t,t']$, where $t'=\inf\{s\in T(\hP_0)\,|\,s>t\}$.
Note that $t'$ is finite by Assumption \ref{assume:persistence}. It follows that
$\hP_s=\hP_t$ for all $s\in[t,t']$ and there exists a nonempty open interval 
$(t',b)\subset T(\hP_0)$. The case $\tau\leq t'$ is now immediate, so assume $\tau>t'$ for the remainder of the proof. 

Recall $\hP_{t'}=\hP_t\not\in\text{cl}(\mathbf{B})$. Since $\hP$ is continuous, there exists $b'\in(t',b)$ such that $\hP_s\not\in \text{cl}(\mathbf{B})$ for all
$s\in(t',b')$. Now, we have two cases: either $\t \in (t',b')$ or $t \not \in (t',b')$. 

Assume $\t \in (t',b')$ for now, recall that $\hP_t'=\hP_t$ and assume for a contradiction that $\hP_\t \not\in C^+_0(B,\hP_{t'})$. 
Then $\hP_{t'} \not\in C^-(\mathbf{B},\hP_\t)$ by Prop. \ref{prop:cone_reversal}.
Furthermore, $\hP_\t \in C^+(\mathbf{B},\hP_s)$ for all $s \in (t',\t)$ by Prop. \ref{prop:subcone_containment}, and hence $\hP_s \in C^-(\mathbf{B},\hP_\t)$ for all $s \in (t',\t)$ by Prop. \ref{prop:cone_reversal}.
Since $\hP_s$ is inside the open cone $C^-(\mathbf{B},\hP_\t)$ and $\hP_{t'} \not\in C^-(\mathbf{B},\hP_\t)$ and $\hP_{t'}\not\in\text{cl}(\mathbf{B})$, it follows that there exists $\delta>0$ such that $\dnorm{\hP'-\hP_{t'}}\geq\delta$ for all $\hP'\in C^-(\mathbf{B},\hP_s)$.

Repeating the argument, by Prop \ref{prop:subcone_containment}, $\hP_s \in C^+(\mathbf{B},\hP_{s'})$ for all $s' \in (t',s)$, and hence $\hP_{s'} \in C^-(\mathbf{B},\hP_{s})$ for all $s' \in (t',s)$ by Prop. \ref{prop:cone_reversal}.
This implies $\dnorm{\hP_{s'}-\hP_{t'}}\geq\delta$ for all $s' \in (t',s)$ and hence $\lim_{s' \ra t'} \dnorm{\hP_{s'} - \hP_{t'}} \geq \d$, which contradicts continuity of $\hP$ at $t'$.
Therefore, if $\t \in (t',b')$ then $\hP_\t \in C^+_0(\mathbf{B},\hP_{t'})=C^+_0(\mathbf{B},\hP_{t})$.

For the second case, if $\t \not \in (t',b')$, then we take any $\t' \in (t',b')$, and conclude using the previous argument that $\hP_{\t'} \in C_0^+(\mathbf{B},\hP_t)$. Using Prop. \ref{prop:subcone_containment} we have that $\hP_s \in C^+(\mathbf{B},\hP_{\t'})$ for all $s > \t'$, and this latter set is contained in $C^+(\mathbf{B},\hP_t)$ by Prop. \ref{prop:cones_generated_from_elements_are_subcones}, and therefore $\hP_\t \in C^+_0(\mathbf{B},\hP_t)$ also in this case. 
\end{proof}

\begin{proposition} \label{prop:subcone_existence}
Let $\hP_t \in \Sigma$ and $\hP_t \not\in \mathbf{B}$ where $\mathbf{B}$ is from Assumption \ref{assume:camera_contained}. Then there exists a $t^+ > t$ such that $\hP_\t \in C^+(\mathbf{B},\hP_t)\cap\Sigma$ for all $\t > t^+$. 
\end{proposition}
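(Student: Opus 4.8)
The plan is to reduce the claim to the two propositions already proved: Proposition \ref{prop:nested_cones}, which keeps the whole forward trajectory inside the \emph{closed} cone $C_0^+(\mathbf{B},\hP_t)$, and Proposition \ref{prop:subcone_containment}, which gives containment in the \emph{open} cone $C^+(\mathbf{B},\hP_t)$ but only when the apex time itself lies in $T(\hP_0)$. The gap between the two conclusions is exactly the apex $\hP_t$: by Proposition \ref{prop:nested_cones} the only way a future point $\hP_\t$ can fail to lie in the open cone is $\hP_\t=\hP_t$, i.e. the estimate has not yet permanently left its starting position. So it suffices to exhibit one future time $t_1>t$ with $t_1\in T(\hP_0)$ at which the trajectory has entered the open cone, and then let the nesting of cones carry this forward to all later times.

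First I would invoke Assumption \ref{assume:persistence} at time $t$ to obtain a nontrivial interval $[t^+_0,t^+_0+\Delta t]\subset T(\hP_0)$ with $t^+_0>t$, and fix some $t_1$ in its interior, so that $t_1\in T(\hP_0)$ and $t_1>t$. If $t\in T(\hP_0)$ the claim is immediate: Proposition \ref{prop:subcone_containment}, whose hypotheses $\hP_t\in\Sigma$ and $\hP_t\not\in\mathbf{B}$ are exactly ours, already gives $\hP_\t\in C^+(\mathbf{B},\hP_t)\cap\Sigma$ for all $\t>t$, and any $t^+>t$ works. Otherwise $t\not\in T(\hP_0)$, so $\dhP_t=0$ and the estimate is frozen at $\hP_t$ until the first subsequent viewing time; the relay then proceeds in three moves. (i) By Proposition \ref{prop:nested_cones}, $\hP_{t_1}\in C_0^+(\mathbf{B},\hP_t)\cap\Sigma$; since the open cone $C^+(\mathbf{B},\hP_t)$ lies on the side of $\hP_t$ opposite to $\mathbf{B}$ it is disjoint from $\mathbf{B}$, and its apex $\hP_t$ is not in $\mathbf{B}$ either, so $\hP_{t_1}\not\in\mathbf{B}$. (ii) Now $t_1\in T(\hP_0)$ with $\hP_{t_1}\in\Sigma$ and $\hP_{t_1}\not\in\mathbf{B}$, so Proposition \ref{prop:subcone_containment} applies at $t_1$ and yields $\hP_\t\in C^+(\mathbf{B},\hP_{t_1})\cap\Sigma$ for all $\t>t_1$. (iii) Because $\hP_{t_1}\in C_0^+(\mathbf{B},\hP_t)$, the third statement of Proposition \ref{prop:cones_generated_from_elements_are_subcones} gives $C^+(\mathbf{B},\hP_{t_1})\subset C^+(\mathbf{B},\hP_t)$. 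Combining (ii) and (iii), $\hP_\t\in C^+(\mathbf{B},\hP_t)\cap\Sigma$ for all $\t>t_1$, and setting $t^+=t_1$ finishes the argument.

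The step I expect to be delicate is the invocation of Proposition \ref{prop:nested_cones} in move (i): its hypothesis is $\hP_t\not\in\text{cl}(\mathbf{B})$, whereas here we are only given $\hP_t\not\in\mathbf{B}$, so the boundary case $\hP_t\in\p\mathbf{B}$ needs separate care, since the $\d$-separation estimate inside Proposition \ref{prop:nested_cones} genuinely uses $\hP_t\not\in\text{cl}(\mathbf{B})$. When $t\in T(\hP_0)$ this is a non-issue, as Proposition \ref{prop:subcone_containment} tolerates $\hP_t\in\p\mathbf{B}$; for $t\not\in T(\hP_0)$ one should observe that at the first viewing time the velocity is a strictly positive multiple of $\eta_t(\ell_t)$ pointing away from $\mathbf{B}$ (Proposition \ref{propC0} together with Lemma \ref{lem:error_minima}), so the estimate leaves $\text{cl}(\mathbf{B})$ the instant it starts to move, reducing to the interior case. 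In the actual use of this proposition (Theorem \ref{thmE1}, where $\hP_0\not\in\text{cl}(\mathbf{B})$) the hypothesis holds strictly, so the subtlety is harmless there; everything else is bookkeeping with the already-established cone inclusions and the freezing of the trajectory outside $T(\hP_0)$.
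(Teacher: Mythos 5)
Your proof is correct and follows essentially the same route as the paper's: Assumption \ref{assume:persistence} supplies a future time $t^+\in T(\hP_0)$, Proposition \ref{prop:nested_cones} places $\hP_{t^+}$ in $C_0^+(\mathbf{B},\hP_t)\cap\Sigma$ (hence in $\Sigma$ and outside $\mathbf{B}$), Proposition \ref{prop:subcone_containment} is applied at $t^+$, and cone nesting (your citation of Proposition \ref{prop:cones_generated_from_elements_are_subcones} versus the paper's re-use of Proposition \ref{prop:nested_cones}) pulls the conclusion back to $C^+(\mathbf{B},\hP_t)$. The subtlety you flag is real but shared by the paper: its own proof likewise invokes Proposition \ref{prop:nested_cones} under the weaker hypothesis $\hP_t\not\in\mathbf{B}$ rather than $\hP_t\not\in\text{cl}(\mathbf{B})$, silently glossing over the boundary case $\hP_t\in\p\mathbf{B}$ that you explicitly patch.
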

\begin{proof}
By Assumption \ref{assume:persistence} there exists a $t^+ > t$ such that
$t^+\in T(\hP_0)$. By Prop. \ref{prop:nested_cones}, $\hP_{t^+}\in C_0^+(\mathbf{B},\hat{P}_t)\cap\Sigma$ and hence $\hP_{t^+}\in \Sigma$ and $\hP_{t^+}\not\in\mathbf{B}$. By Prop. \ref{prop:subcone_containment}, $\hP_\t \in C^+(\mathbf{B},\hP_{t^+})\cap\Sigma$ for all $\t > t^+$. By Prop. \ref{prop:nested_cones}, $C^+(\mathbf{B},\hat{P}_{t^+}) \subset C^+(\mathbf{B},\hP_t)$ and the result follows. 
\end{proof}

\begin{figure}[ht!]
\centering
\includegraphics[width=3.0in]{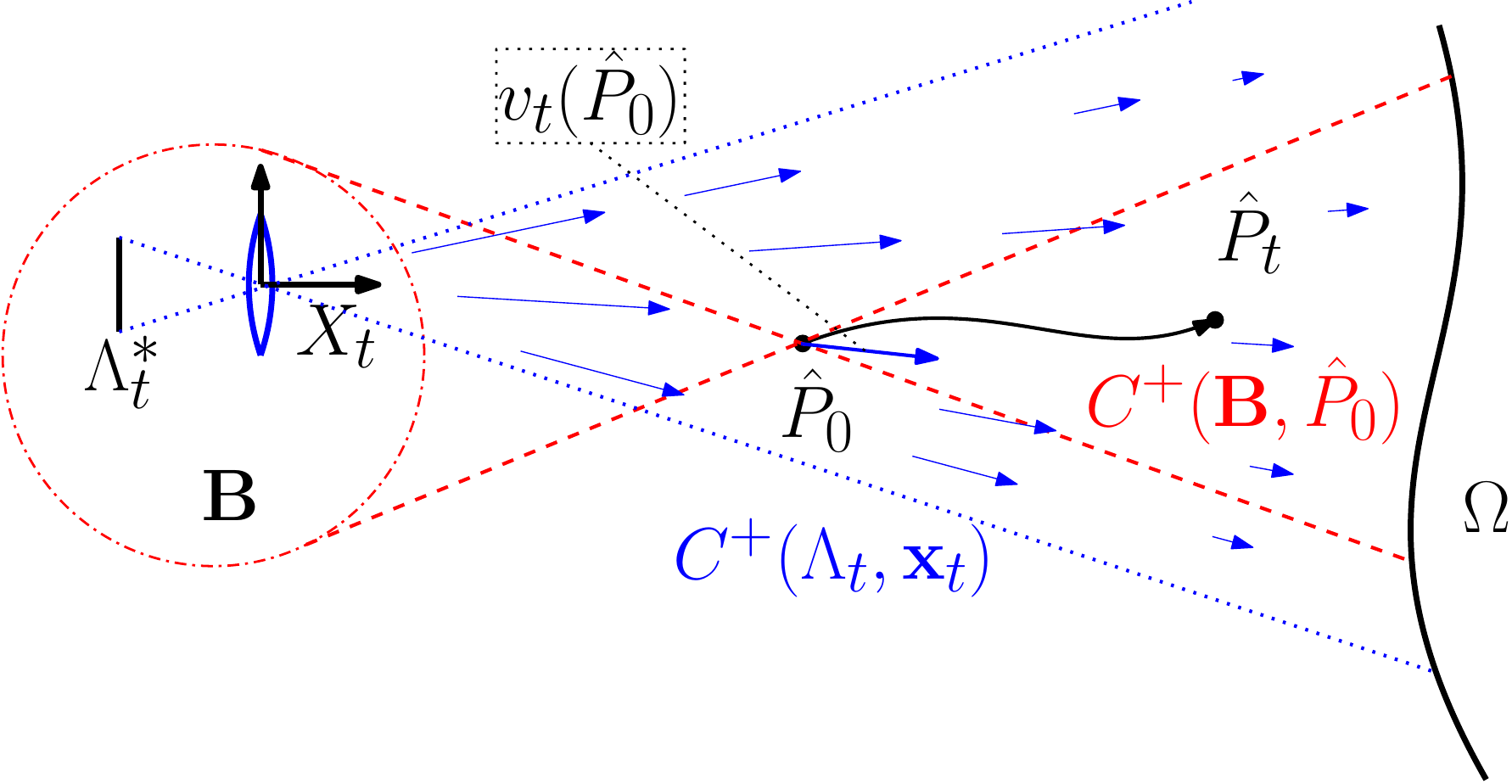}
\caption{A initial point estimate $\hat{P}_0 \in \Sigma$, $\hP_0 \not \in \mathbf{B}$ has its trajectory $\hP$ contained in the pointed cone $C_0^+(\mathbf{B},\hP_0)$. The observer produces a vector field $v_t$ which always points away from the optical centre of the camera. The set of points for which the vector field can be non-zero is the cone $C^+(\Lambda^*_t,\mathbf{x}_t)$, where $\mathbf{x}_t$ is the optical centre. }
\label{fig:Trajectory}
\end{figure}

We have now established that if $\hP_0 \in \Sigma$ and $\hP_0 \not \in \mathbf{B}$ then $\hP_t \in C^+_0(\mathbf{B},\hP_0)\cap\Sigma$ for all $t \geq 0$, see Fig. \ref{fig:Trajectory}. Since the trajectory $\hP$ is contained in a bounded set, it is a simple consequence of the Bolanzo-Weierstrass theorem that the trajectory has an accumulation point in the closure of that set. 

\subsubsection{Accumulation points are limit points} \label{subsubsec:accumulation_points_are_limits}

The following two propositions establish that any accumulation point of the trajectory $\hP$ must be a limit point. 

\begin{proposition} \label{propC3}
Let $\hP_0 \in \Sigma$ and $\hP_0 \not\in \mathbf{B}$ where $\mathbf{B}$ is from Assumption \ref{assume:camera_contained}.
If $Q$ is an accumulation point of the trajectory $\hP$ then $Q \in C^+(\mathbf{B},\hP_t)$ for all $t \geq 0$. 
\end{proposition}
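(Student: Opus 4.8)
The plan is to fix an arbitrary $t\ge 0$ and prove $Q\in C^+(\mathbf{B},\hP_t)$; since $t$ is arbitrary this gives the claim for all $t\ge 0$. The only genuine difficulty is that $Q$ arises as a limit of trajectory points lying in the \emph{open} cone $C^+(\mathbf{B},\hP_t)$, so a direct passage to the limit would only place $Q$ in the closure of that cone. The remedy is to interpose an intermediate time and exploit Proposition \ref{prop:cones_generated_from_elements_are_subcones}, which ensures that the \emph{closure} of a subcone generated by an interior point is still contained in the larger open cone.

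First I would record a preliminary fact needed to invoke the earlier machinery at every time, namely that $\hP_t\not\in\mathbf{B}$ for all $t\ge 0$. We have already established that $\hP_t\in C_0^+(\mathbf{B},\hP_0)\cap\Sigma$ for all $t\ge 0$, and the positive half-cone $C^+(\mathbf{B},\hP_0)$ lies on the far side of its apex from $\mathbf{B}$: any cone point lying in $\mathbf{B}$ would, by convexity, force the apex $\hP_0$ into $\mathbf{B}$, contradicting $\hP_0\not\in\mathbf{B}$. Hence $C_0^+(\mathbf{B},\hP_0)$ is disjoint from $\mathbf{B}$, so $\hP_t\not\in\mathbf{B}$, and Proposition \ref{prop:subcone_existence} (which requires $\hP_t\in\Sigma$ and $\hP_t\not\in\mathbf{B}$) applies at every time. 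The main chain then runs as follows. Fix $t\ge 0$; by Proposition \ref{prop:subcone_existence} applied at time $t$ there is a $t^+>t$ with $\hP_s\in C^+(\mathbf{B},\hP_t)$ for all $s>t^+$, and I fix one such $s$. Since $\hP_s$ is an interior point, Proposition \ref{prop:cones_generated_from_elements_are_subcones} yields $\text{cl}(C^+(\mathbf{B},\hP_s))\subset C^+(\mathbf{B},\hP_t)$. Applying Proposition \ref{prop:subcone_existence} once more, now at time $s$, produces an $s^+>s$ with $\hP_\tau\in C^+(\mathbf{B},\hP_s)$ for all $\tau>s^+$. Choosing a sequence $t_n\to\infty$ with $\hP_{t_n}\to Q$ (available since $Q$ is an accumulation point), all sufficiently large $t_n$ exceed $s^+$, so $\hP_{t_n}\in C^+(\mathbf{B},\hP_s)$ and therefore $Q\in\text{cl}(C^+(\mathbf{B},\hP_s))\subset C^+(\mathbf{B},\hP_t)$, completing the argument.

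The step I expect to be the crux is precisely this closure-to-open upgrade: one cannot conclude $Q\in C^+(\mathbf{B},\hP_t)$ directly from $\hP_{t_n}\in C^+(\mathbf{B},\hP_t)$, because the limit of points in an open set need only lie in its closure. The two-stage construction through the intermediate point $\hP_s$, combined with the nesting property of Proposition \ref{prop:cones_generated_from_elements_are_subcones}, is exactly what bridges this gap, and checking that the hypotheses of Proposition \ref{prop:subcone_existence} hold uniformly (via the preliminary fact $\hP_t\not\in\mathbf{B}$) is the only other ingredient that requires care.
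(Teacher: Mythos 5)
Your proof is correct and takes essentially the same route as the paper: both arguments rest on Proposition \ref{prop:subcone_existence} to push the tail of the trajectory into a subcone and on Proposition \ref{prop:cones_generated_from_elements_are_subcones} to obtain $\text{cl}(C^+(\mathbf{B},\hP_s))\subset C^+(\mathbf{B},\hP_t)$, which is exactly the closure-to-open upgrade that traps $Q$ inside the open cone. The paper packages this as a contradiction (if $Q$ were outside the cone it would keep a positive distance from the trajectory tail), whereas you run the same mechanism directly; your explicit verification that $\hP_t\not\in\mathbf{B}$ for all $t\geq 0$ makes precise a hypothesis check the paper's proof leaves implicit.
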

\begin{proof}
Suppose for a contradiction that there were a $t \geq 0$ such that $Q \not\in C^+(\mathbf{B},\hP_t)$. 
By Prop \ref{prop:subcone_existence} there exists a $t^+ > t$ such that $\hP_\t \in C^+(\mathbf{B},\hP_t)$ for all $\t > t^+$. By Prop.  \ref{prop:cones_generated_from_elements_are_subcones}, $\text{cl}(C^+(\mathbf{B},\hP_\t)) \subset C^+(\mathbf{B},\hP_t)$ for all $\t > t^+$ and since the latter set is open, $Q \not\in C^+(\mathbf{B},\hP_t)$ has a strictly positive distance from all the former sets. In particular, there exists a $\d > 0$ such that for all $\t > t^+$, we have $\dnorm{Q - \hP_{\t}} > \d$, which contradicts the assumption that $Q$ is an accumulation point. 
\end{proof}

\begin{figure}[ht!]
\centering
\includegraphics[width=3.0in]{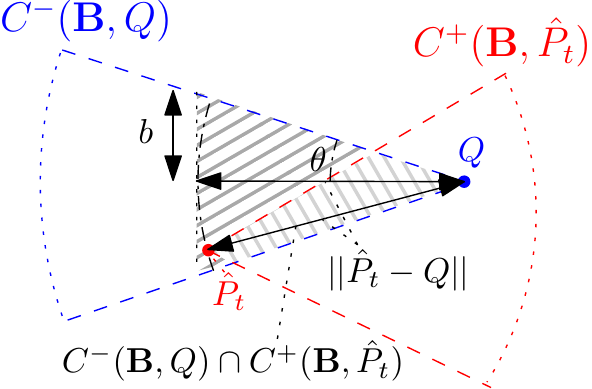}
\caption{The cones $C^-(\mathbf{B},Q)$ and $C^+(\mathbf{B},\hP_t)$ and their intersection are illustrated. In the darker grey shaded region is a right-angled cone containing the intersection with base radius $b$ and height $\dnorm{\hat{P}_t-Q}$. }
\label{fig:Right_cone_containment}
\end{figure}

\begin{proposition} \label{propC4}
Let $\hP_0 \in \Sigma$ and $\hP_0 \not\in \text{cl}(\mathbf{B})$ where $\mathbf{B}$ is from Assumption \ref{assume:camera_contained}.
Any accumulation point $Q$ of the trajectory $\hP$ is a limit point. 
\end{proposition}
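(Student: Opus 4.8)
The plan is to show that the tail of the trajectory is eventually trapped in balls of arbitrarily small radius about $Q$, which is precisely the statement $\lim_{t\to\infty}\hP_t = Q$. Two facts drive the argument. First, by Proposition \ref{propC3} we have $Q \in C^+(\mathbf{B},\hP_t)$ for every $t\geq 0$, which by Proposition \ref{prop:cone_reversal} is equivalent to $\hP_t \in C^-(\mathbf{B},Q)$ for every $t$. Second, because $Q$ is an accumulation point I may select times at which $\hP_t$ is as close to $Q$ as desired. I also note that, since the trajectory remains in $C_0^+(\mathbf{B},\hP_0)$ and this closed set has positive distance to $\text{cl}(\mathbf{B})$ when $\hP_0\not\in\text{cl}(\mathbf{B})$, the limit candidate satisfies $Q\not\in\text{cl}(\mathbf{B})$ with a uniform positive separation.

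Fix $\eps > 0$. Using the accumulation property I would pick a time $t$ with $\dnorm{\hP_t - Q}$ as small as needed; since $\hP_t\in\Sigma$ and $\hP_t\not\in\mathbf{B}$, Proposition \ref{prop:subcone_existence} supplies a $t^+ > t$ with $\hP_\t \in C^+(\mathbf{B},\hP_t)$ for all $\t > t^+$. Combining this with the first fact gives $\hP_\t \in C^+(\mathbf{B},\hP_t)\cap C^-(\mathbf{B},Q)$ for every $\t > t^+$, so the entire tail of the trajectory lives in the intersection of the forward cone with apex $\hP_t$ and the bounded backward cone with apex $Q$ (see Fig.~\ref{fig:Right_cone_containment}).

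The heart of the argument is the geometric claim that this intersection is contained in a right-angled cone with apex $Q$, height $\dnorm{\hP_t - Q}$, and base radius $b$ proportional to $\dnorm{\hP_t - Q}$. The height bound reflects that the intersection lies between the two apexes: it is cut off on the $\mathbf{B}$ side by $C^+(\mathbf{B},\hP_t)$ (points must lie beyond $\hP_t$) and on the far side by $C^-(\mathbf{B},Q)$ (points must lie below $Q$). The radius bound comes from controlling the opening angles of both cones via the dot-product characterisation of Proposition \ref{propB2}: the half-angles of $C^+(\mathbf{B},\hP_t)$ and $C^-(\mathbf{B},Q)$ are fixed by $r$ and the distances of the apexes to the centre of $\mathbf{B}$, and since $Q\not\in\text{cl}(\mathbf{B})$ these angles stay uniformly bounded away from $\pi/2$ as $\hP_t\to Q$; hence the transverse extent of the intersection is $O(\dnorm{\hP_t - Q})$. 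Applying Proposition \ref{prop:cone_diameter} to this right-angled cone yields containment in $B_\rho(Q)$ with $\rho = 2\sqrt{b^2 + \dnorm{\hP_t - Q}^2}$, which tends to $0$ as $\hP_t \to Q$.

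To finish, I would choose the time $t$ close enough that $\rho < \eps$; then $\hP_\t \in B_\eps(Q)$ for all $\t > t^+$, and since $\eps$ was arbitrary this gives $\hP_\t \to Q$. The main obstacle is the geometric claim of the third paragraph: making rigorous that the two-cone intersection embeds in a right-angled cone whose height and base radius both shrink linearly in $\dnorm{\hP_t - Q}$, and in particular establishing the uniform bound on the two half-angles through Proposition \ref{propB2} together with the uniform separation of $Q$ from $\mathbf{B}$.
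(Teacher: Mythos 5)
Your proposal is correct and follows essentially the same argument as the paper: trap the tail of the trajectory in $C^+(\mathbf{B},\hP_t)\cap C^-(\mathbf{B},Q)$ via Prop.~\ref{propC3} and Prop.~\ref{prop:cone_reversal}, enclose that intersection in a right-angled cone with apex $Q$ whose height and base radius scale linearly with $\dnorm{\hP_t - Q}$, apply Prop.~\ref{prop:cone_diameter}, and let the accumulation property shrink $\dnorm{\hP_t - Q}$. The only cosmetic differences are that you invoke Prop.~\ref{prop:subcone_existence} (tail containment after a waiting time $t^+$) where the paper uses Prop.~\ref{prop:nested_cones} (containment for all $\t \geq t$), and the paper resolves your flagged ``main obstacle'' exactly as you sketch it, using the fixed opening angle $\th$ of $C^-(\mathbf{B},Q)$ so that the enclosing ball has radius $2\sqrt{1+\tan^2\th}\,\dnorm{\hP_t - Q}$.
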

\begin{proof}
Fix $t\geq 0$. By Prop. \ref{prop:nested_cones}, $\hP_{t}\in C_0^+(\mathbf{B},\hat{P}_0)\cap\Sigma$ and hence $\hP_{t}\in \Sigma$ and $\hP_{t}\not\in\text{cl}(\mathbf{B})$.
Again by Prop. \ref{prop:nested_cones}, $\hP_{\t}\in C_0^+(\mathbf{B},\hat{P}_t)$ for all $\t>t$.
By Prop. \ref{propC3}, $Q \in C^+(\mathbf{B},\hP_\t)$ which by Prop. \ref{prop:cone_reversal} implies $\hP_\t\in C^-(\mathbf{B},Q)$, for all $\t\geq t$. 
Therefore, $\hP_t\in C^-(\mathbf{B},Q)$ and $Q\in C^+(\mathbf{B},\hP_t)$, see Fig. \ref{fig:Right_cone_containment}, and $\hP_\t \in C^-(\mathbf{B},Q) \cap C_0^+(\mathbf{B},\hP_t)$ for all $\t>t$.
 
Let $\th$ be the opening angle of the cone $C^-(\mathbf{B},Q)$. The set $C^-(\mathbf{B},Q) \cap C_0^+(\mathbf{B},\hP_t)$ is contained in a right-angled cone of base radius $b=\dnorm{\hP_t - Q} \tan \th$ and height $\dnorm{\hP_t-Q}$ because $\norm{\frac{Q}{\dnorm{Q}} \cdot (\hP_t - Q)} \leq \dnorm{\hP_t - Q}$, see Fig. \ref{fig:Right_cone_containment} and recall that $\mathbf{B}$ is centred at $0$. 

By Prop. \ref{prop:cone_diameter}, this right-cone is contained in an open ball around $Q$ of radius $2 \sqrt{1 + \tan^2 \th} \dnorm{\hP_t-Q}$, and therefore $\dnorm{\hP_t - Q} < \eps$ implies $\dnorm{\hP_\t - Q} < 2 \sqrt{1 + \tan^2 \th} \cdot \eps$ for all $\t>t$. 

This implies that $Q$ is a limit point, because given $\rho>0$ there exists a $t\geq 0$ such that $\dnorm{\hP_t-Q} < \rho/(2 \sqrt{1 + \tan^2 \th})$ since $Q$ is an accumulation point, and hence $\dnorm{\hP_\t - Q} < \rho$ for all $\t>t$. 
\end{proof}

\subsubsection{The limit point can not be in $C^+(\mathbf{B},\hP_0)\cap\Sigma$} \label{subsubsec:limit_contradiction}

\begin{figure}[ht!]
\centering
\includegraphics[width=3.0in]{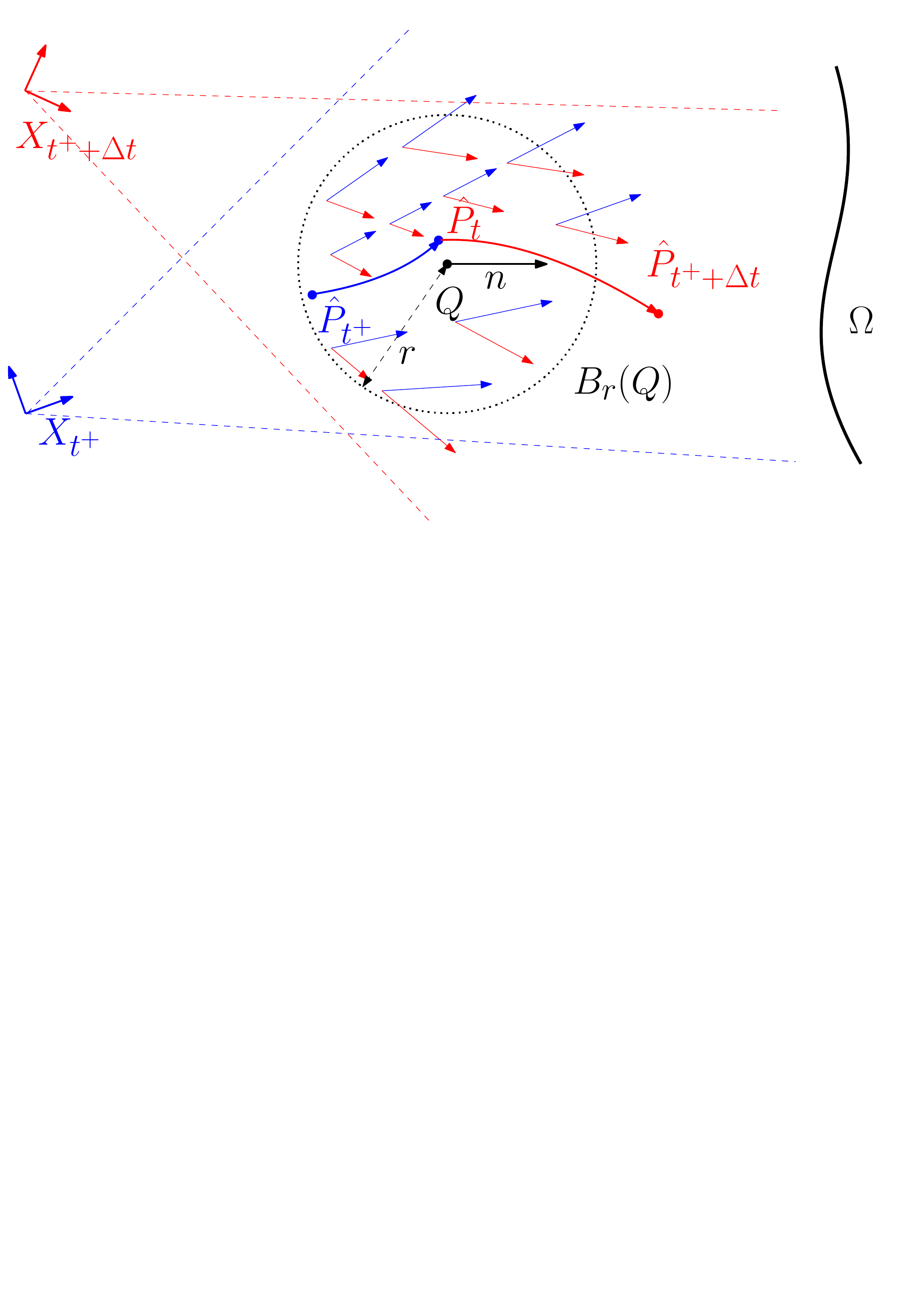}
\caption{There is an open ball of radius $r>0$ around a limit point $Q\in\Sigma$ for which point estimates entering the ball eventually leave. In this diagram, the vector field $v$ is shown for two different times shown in red and blue. There is a vector $n$ and a $c>0$ for which each of the vectors $v_\t(\hP')$ assigned to a point $\hP'$ in the ball at a time $\t \in [t^+,t^+ + \Delta t]$ satisfies $n \cdot v_\t(\hP') \geq c$.}
\label{fig:Limit_Points}
\end{figure}

The following proposition implies that if the trajectory $\hP$ enters a certain nonempty open ball around the limit point $Q$ it will eventually leave that ball, see Fig. \ref{fig:Limit_Points}.

\begin{proposition} \label{propD3}
Let $\hP_0 \in \Sigma$ and $\hP_0 \not\in \mathbf{B}$ where $\mathbf{B}$ is from Assumption \ref{assume:camera_contained}.
Let $Q \in \Sigma$ be a limit point of the trajectory $\hP$, and let $\Delta t$ be the length of time from Assumption \ref{assume:persistence}. 
Then there exists a direction $n$, a $c>0$, an $r>0$, and a sequence $(t^+_i)_{i=1}^\infty$ of times with $t^+_i > 0$ for all $i\in\N$ and $\lim_{i\to\infty}t^+_i=\infty$, such that for all $i\in\N$ and for all times $\t \in [t^+_i,t^+_i + \Delta t]$ and all points $\hP' \in B_r(Q)$, $n \cdot v_\t(\hP') \geq c$. 
\end{proposition}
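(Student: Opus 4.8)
The plan is to read the direction $n$ and the scale $c$ off the geometry of the vector field near the limit point, and to harvest the sequence $(t^+_i)$ directly from Assumption \ref{assume:persistence}. First I would fix the radius. Since $Q\in\Sigma$ and $\Sigma$ is open, I can choose $r>0$ so small that $\text{cl}(B_r(Q))\subset\Sigma$, that $r\leq\rho/2$ with $\rho$ the radius from Assumption \ref{assume:persistence}, and that $\text{cl}(B_r(Q))$ is disjoint from $\text{cl}(\mathbf{B})$; the last is possible because $Q\in C^+(\mathbf{B},\hP_0)$ by Prop.~\ref{propC3} and this cone is separated from $\text{cl}(\mathbf{B})$, which also forces $Q\neq 0$, so I may set $n:=Q/\dnorm{Q}$.

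Next I would build the windows. Since $Q$ is a limit point we have $\hP_s\to Q$, so there is a $T_0$ with $\dnorm{\hP_s-Q}<\rho/2$ for all $s>T_0$; applying Assumption \ref{assume:persistence} with $t=T_0+i$ produces times $t^+_i>T_0+i$ (hence $t^+_i\to\infty$) whose persistence conclusion holds on $[t^+_i,t^+_i+\Delta t]$. For every $s$ in such a window, $r\leq\rho/2$ and $\dnorm{\hP_s-Q}<\rho/2$ give $\text{cl}(B_r(Q))\subset\text{cl}(B_\rho(\hP_s))$, so $\pi_{\mathbf{x}_s}(\text{cl}(B_r(Q)))\subset\Lambda^*_s$ and $\hP'\cdot\nu_s>0$ for all $\hP'\in\text{cl}(B_r(Q))$. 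Thus every such $\hP'$ is seen and the nonzero branch of \eqref{eq:observer_vector_field} applies: $v_s(\hP')=-\nabla_1\eps(\hP'\cdot\eta_s(\ell_s),\ell_s)\,\eta_s(\ell_s)$ with $\ell_s=\pi_{\mathbf{x}_s}(\hP')$, and since $\mathbf{x}_s$ lies between $\hP'$ and $\ell_s$ one has $\eta_s(\ell_s)=(\hP'-\mathbf{x}_s)/\dnorm{\hP'-\mathbf{x}_s}$.

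It then remains to bound $n\cdot v_s(\hP')=\bigl(-\nabla_1\eps(\hP'\cdot\eta_s(\ell_s),\ell_s)\bigr)\bigl(n\cdot\eta_s(\ell_s)\bigr)$ below by one positive constant, factor by factor. For the direction factor the bound is elementary: with $\mathbf{x}_s\in\mathbf{B}$ (radius $r_B$, centred at $0$) and $\hP'\in\text{cl}(B_r(Q))$ one gets $n\cdot(\hP'-\mathbf{x}_s)\geq \dnorm{Q}-r-r_B>0$ by disjointness, so $n\cdot\eta_s(\ell_s)\geq c_1>0$ uniformly. For the magnitude factor I would use that every $\hP'\in\text{cl}(B_r(Q))\subset\Sigma$ is strictly inside the environment and uniformly bounded away from $\Omega=\p\Sigma$, while $\hP'\not\in\mathbf{B}$ together with Assumption \ref{assume:camera_contained} keeps its distance estimate above $\Delta_{\text{min}}$; hence the distance estimate $\hP'\cdot\eta_s(\ell_s)$ lies strictly between $\Delta_{\text{min}}$ and the true distance $\gamma(\ell_s)$, and Lemma \ref{lem:error_minima} gives $-\nabla_1\eps>0$ pointwise. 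Setting $c:=c_1c_2$ with $c_2$ the bound obtained below then closes the argument.

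The hard part is upgrading the pointwise strict inequality $-\nabla_1\eps>0$ to a uniform bound $-\nabla_1\eps\geq c_2>0$ holding across the whole infinite family of windows at once. My plan is to note that the light field $L_s$ is generated by the fixed scene $(\Omega,\b)$ from the pose $X_s$ alone, so $-\nabla_1\eps(\hP'\cdot\eta(\ell),\ell)$ (with $\ell=\pi_{\mathbf{x}}(\hP')$) is a continuous function of $(\hP',X)$ on the compact set $\text{cl}(B_r(Q))\times(\text{cl}(\mathbf{B})\times SO(3))$ restricted to the closed ``seen-with-margin'' condition that the $\rho$-ball in Assumption \ref{assume:persistence} supplies; here I rely on the continuous differentiability of $\eps$ asserted in the problem formulation, and on Assumption \ref{assume:camera_contained} confining every optical centre to $\mathbf{B}$ so that every configuration $(\hP',X_s)$ arising in the windows lands in this compact set. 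A continuous strictly positive function on a nonempty compactum attains a positive minimum $c_2$, which is the desired uniform bound. The genuine subtlety I expect to wrestle with is that Lemma \ref{lem:error_minima} delivers only \emph{strict monotonicity} of $\eps$ in its first argument, which a priori gives $-\nabla_1\eps\geq 0$ but permits isolated zeros of the derivative; to legitimately conclude strict positivity everywhere on the compact set — and hence $c_2>0$ — I would need to extract a pointwise gradient statement from the integral comparison in the proof of Lemma \ref{lem:error_minima}, most cleanly by invoking a quantitative version of Assumption \ref{assume:increasing_colour_diff} (or smoothness of $\b$ with non-vanishing gradient), and this is the step deserving the most care.
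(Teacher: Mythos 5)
Your proposal follows essentially the same route as the paper's proof: a small ball $B_r(Q)\subset\Sigma$ disjoint from $\mathbf{B}$ with $r<\rho/2$, a separating direction $n$ (the paper takes the normal of a separating hyperplane between the two convex sets, you take $Q/\dnorm{Q}$, which is equivalent here since $\mathbf{B}$ is centred at $0$), a sequence of time windows harvested directly from Assumption \ref{assume:persistence} after the limit-point property confines the trajectory near $Q$, and finally a continuity-plus-compactness argument giving the uniform bound $c>0$, where the paper parametrises its compact set by $(\hat{\Delta},\eta)$ via the change of variables to the main lens while you parametrise by $(\hP',X)$. The ``subtlety'' you flag --- that Lemma \ref{lem:error_minima} gives only strict monotonicity of $\eps$, hence a priori only $-\nabla_1\eps\geq 0$ pointwise rather than the strict positivity needed for a positive minimum on the compactum --- is present in the paper's proof as well, which simply asserts $\nabla_1\eps<0$ ``by Lemma \ref{lem:error_minima}''; so your version is no less complete than the paper's, and is more candid about that step.
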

\begin{proof} 
Let $\rho>0$ be the radius from Assumption \ref{assume:persistence} and choose $0 <  r < \frac{\rho}{2}$ such that $B_{r}(Q)\subset\Sigma$ and $B_r(Q)\cap\mathbf{B}=\emptyset$. Such an $r$ exists since $\Sigma$ is open and $Q$ has a positive distance from $\mathbf{B}$ by Prop. \ref{propC3}. 
Because $\mathbf{B}$ and $B_r(Q)$ are both convex and non-intersecting, there exists a separating hyperplane $\Gamma$ between them. Let $n$ be the unit normal vector to this hyperplane pointing in the direction of $Q$.

Since $Q$ is a limit point, there exists a time $t\geq 0$ such that $\hP_\t \in B_{r}(Q)$ for all $\t > t$, and by Assumption \ref{assume:persistence}, there exists a sequence $(t^+_i)_{i=1}^\infty$ with $t^+_i > t \geq 0$ for all $i\in\N$ and $\lim_{i\to\infty}t^+_i=\infty$, such that $\pi_{\mathbf{x}_\t}(B_{\rho}(\hP_\t)) \subset \Lambda^*_\t$ for all $i\in\N$ and $\t \in [t^+_i, t^+_i + \Delta t]$. 
Because $r < \frac{\rho}{2}$, this implies $\pi_{\mathbf{x}_\t}(\text{cl}(B_{r}(Q))) \subset \Lambda^*_\t$ for all $i\in\N$ and $\t \in [t^+_i, t^+_i + \Delta t]$.

Now fix $\hP'\in \text{cl}(B_{r}(Q))$, $i\in\N$ and $\t \in [t^+_i, t^+_i + \Delta t]$ and let $\ell_\t = \pi_{\mathbf{x}_\t}(\hP')$. Then $\ell_\t \in \Lambda^*_\t$ and hence $v_\t(\hP') = 
-\nabla_1 \eps (\hP' \cdot \eta_\t(\ell_\t),\ell_\t)\eta_\t(\ell_\t)$.  
Because $\eta_\t(\ell_\t)$ points from $\ell_\t\in\mathbf{B}$ into the direction of $\hP'\in \text{cl}(B_{r}(Q))$ on the other side of the hyperplane $\Gamma$, and because $\nabla_1 \eps (\hP' \cdot \eta_\t(\ell_\t),\ell_\t)<0$ by Lemma \ref{lem:error_minima}, it follows that $n \cdot v_\t(\hP') > 0$.

Changing coordinates to the main lens $Z$ as in the proof of Lemma \ref{lem:error_minima} gives
\begin{equation} \label{eq:gradient_expanded}
\nabla_1 \eps(\hat{\Delta},\ell) = \int_{Z} D_{\hat{\Delta}} \dnorm{\beta(P) - \beta(\pi^{-1}_{\hat{\Delta} \eta}(\zeta))}^2 d \zeta, 
\end{equation}
where $\eta=\eta(\ell)$ and $\pi^{-1}_{\hat{\Delta} \eta} (\zeta)$ is the perspective projection from $Z$ through $\hP=\hat{\Delta} \eta(\ell)$ to $\Omega$. Note that the integrand is defined as the derivative of the sum of absolute differences of a composition of perspective projections and the smooth brightness map $\b$, and so the expression on the right hand side of \eqref{eq:gradient_expanded} is a continuous function $F$ of $\hat{\Delta}$ and 
$\eta$, as long as $\eta$ points away from the main lens and towards $P$.
It follows that $-F(\hat{\Delta},\eta)\eta\cdot n$ attains its minimum $c>0$ on the compact set 
$\{(\hat{\Delta},\eta)\,|\,\hat{\Delta}\eta\in\text{cl}(B_{r}(Q)) \text{ and }
\hP_0+\eta\in\text{cl}(C^+(\mathbf{B},\hP_0))\}$. Here we have used that $n$ points towards $Q$ and $Q\in C^+(\mathbf{B},\hP_0)$ by Prop. \ref{propC3}.

Since $\hP'\in \text{cl}(B_{r}(Q))$ and $\hP_0+\eta_\t(\ell_\t)\in\text{cl}(C^+(\mathbf{B},\hP_0))$ by Prop. \ref{prop:nested_cones},
it follows that $n \cdot v_\t(\hP') = -\nabla_1 \eps (\hP' \cdot \eta_\t(\ell_\t),\ell_\t)\eta_\t(\ell_\t) \cdot n \geq c$.
\end{proof}

It now follows that there can not be a limit point of the trajectory $\hP$ in $C^+(\mathbf{B},\hP_0)\cap\Sigma$.

\begin{lemma} \label{lemD1}
Let $\hP_0 \in \Sigma$ and $\hP_0 \not\in \mathbf{B}$ where $\mathbf{B}$ is from Assumption \ref{assume:camera_contained}. 
Then the trajectory $\hP$ has no limit point in the set $C^+(\mathbf{B},\hP_0) \cap \Sigma$.
\end{lemma}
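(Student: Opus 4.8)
The plan is to argue by contradiction. Suppose $Q \in C^+(\mathbf{B},\hP_0)\cap\Sigma$ is a limit point, so that $\hP_\t \to Q$ as $\t\to\infty$. Since $Q\in\Sigma$, Proposition \ref{propD3} applies and furnishes a direction $n$, a constant $c>0$, a radius $r>0$, and a sequence $(t^+_i)$ with $t^+_i\to\infty$ such that $n\cdot v_\t(\hP')\geq c$ for every $\hP'\in B_r(Q)$ and every $\t\in[t^+_i,t^+_i+\Delta t]$. Recall from the proof of Proposition \ref{propD3} that $n$ is the unit normal, pointing towards $Q$, of a hyperplane $\Gamma$ separating $\mathbf{B}$ from $B_r(Q)$. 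The strategy is to track the scalar $f(\t):=n\cdot\hP_\t$ and show that the persistent ``pushing'' supplied by Proposition \ref{propD3} forces $f(\t)\to\infty$, which contradicts the fact that $\hP_\t\to Q$ forces $f(\t)\to n\cdot Q<\infty$.

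First I would fix a time $t_0$ beyond which the trajectory is trapped in the ball: since $\hP_\t\to Q$, there is a $t_0$ with $\hP_\t\in B_r(Q)$ for all $\t>t_0$. The key structural claim is that $f$ is non-decreasing on $(t_0,\infty)$, i.e. $n\cdot\dhP_\t\geq 0$ whenever $\hP_\t\in B_r(Q)$. To see this, note that $\hP_\t\in\Sigma$ for all $\t$ by Proposition \ref{propC2b}, so by \eqref{eq:observer_vector_field} and Lemma \ref{lem:error_minima} the vector field $\dhP_\t=v_\t(\hP_\t)$ is either zero or a strictly positive multiple of $\eta_\t(\ell_\t)$. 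The direction $\eta_\t(\ell_\t)$ is a positive multiple of $\hP_\t-\mathbf{x}_\t$, which points from the optical centre $\mathbf{x}_\t\in\mathbf{B}$ (Assumption \ref{assume:camera_contained}) across $\Gamma$ into $B_r(Q)$; hence $n\cdot\eta_\t(\ell_\t)>0$, and therefore $n\cdot v_\t(\hP_\t)\geq 0$ in either case.

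With monotonicity in hand, I would thin $(t^+_i)$ to a subsequence whose windows $[t^+_{i_k},t^+_{i_k}+\Delta t]$ are pairwise disjoint, lie in $(t_0,\infty)$, and are increasingly ordered (possible because $t^+_i\to\infty$). On each such window, integrating $n\cdot\dhP_\t\geq c$ yields $f(t^+_{i_k}+\Delta t)-f(t^+_{i_k})\geq c\,\Delta t$, while $f$ does not decrease between consecutive windows. A telescoping estimate then gives $f(t^+_{i_k}+\Delta t)\geq f(t^+_{i_1})+k\,c\,\Delta t\to\infty$, contradicting the boundedness of $f$ that follows from $\hP_\t\to Q$.

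The step I expect to be the main obstacle is establishing the monotonicity $n\cdot\dhP_\t\geq 0$ \emph{off} the persistence windows: Proposition \ref{propD3} only controls the sign of the innovation during the windows, so I must independently certify that the trajectory never drifts back across $\Gamma$ during the (possibly long) intervals when its projection leaves $\Lambda^*_\t$ or when the innovation is merely non-negative rather than bounded below. This hinges on the geometric fact that the innovation direction $\eta_\t(\ell_\t)$ always points from the camera side of $\Gamma$ towards $Q$, together with the sign of $\nabla_1\eps$ guaranteed by Lemma \ref{lem:error_minima} for estimates lying inside $\Sigma$; without this, the gains accumulated during the windows could in principle be cancelled between them.
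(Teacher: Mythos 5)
Your argument reaches the right contradiction, but by a genuinely different mechanism than the paper, and the difference is instructive. The paper never needs any control of the vector field \emph{outside} the persistence windows: after obtaining $n$, $c>0$, $r>0$ and the times $(t^+_i)$ from Prop.~\ref{propD3}, it shrinks the trapping ball to radius $r' < \min\{r, \tfrac{c\,\Delta t}{2}\}$. Once the trajectory is trapped in $B_{r'}(Q)$, a \emph{single} window $[t^+_i,t^+_i+\Delta t]$ suffices: as long as the trajectory stays in $B_{r'}(Q)\subset B_r(Q)$, integrating $n\cdot\dhP_\t\geq c$ forces a displacement of at least $c\,\Delta t > 2r'$ along $n$, which is impossible inside a ball of diameter $2r'$; so the trajectory exits $B_{r'}(Q)$ during that window, contradicting trapping. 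Your version keeps the ball of radius $r$ and therefore must accumulate the gain $c\,\Delta t$ over infinitely many disjoint windows while certifying that nothing is lost in between---exactly the monotonicity claim $n\cdot\dhP_\t\geq 0$ that you yourself flagged as the main obstacle. The paper's choice of radius buys the complete elimination of that obligation; your route carries it as an extra lemma.

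The extra lemma is, however, dischargeable under the paper's assumptions, so your proof can be completed. Off the windows, either the vector field vanishes ($\ell_\t\not\in\Lambda^*_\t$, or $\hP_\t$ is not in front of the camera, in which case $\pi_{\mathbf{x}_\t}(\hP_\t)$ is undefined and the ``otherwise'' branch of \eqref{eq:observer_vector_field} applies), or else $\t\in T(\hP_0)$ and $\dhP_\t$ is a positive multiple of $\eta_\t(\ell_\t)$, hence of $\hP_\t-\mathbf{x}_\t$; since $\mathbf{x}_\t\in\mathbf{B}$ and $\hP_\t\in B_r(Q)$ lie on opposite sides of the separating hyperplane $\Gamma$, this gives $n\cdot\dhP_\t\geq 0$. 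The one point your sketch glosses over is why Lemma~\ref{lem:error_minima} applies at all: membership $\hP_\t\in\Sigma$ (Prop.~\ref{propC2b}, plus convexity of $\Omega$) only yields the upper bound $\hat{\Delta}=\hP_\t\cdot\eta_\t(\ell_\t)<\Delta$, whereas the lemma also requires $\hat{\Delta}>\Delta_{\text{min}}$. That lower bound follows from Assumption~\ref{assume:camera_contained} together with $B_r(Q)\cap\mathbf{B}=\emptyset$ (which Prop.~\ref{propD3} builds into its choice of $r$): when the vector field is active, $\hP_\t\in C^+(\Lambda^*_\t,\mathbf{x}_\t)$, and $\hP_\t\cdot\nu_\t\leq\Delta_{\text{min}}$ would force $\hP_\t\in\mathbf{B}$, a contradiction. (Equivalently, you can shortcut this by citing Prop.~\ref{propC0}: it already asserts that $\dhP_\t$ points into $C^+(\mathbf{B},\hP_\t)$, and any such vector has nonnegative inner product with $n$ by the separation.) With that in place, your thinning and telescoping argument is sound: $n\cdot\hP_\t$ gains at least $c\,\Delta t$ per window and never decreases in between, hence diverges, contradicting $n\cdot\hP_\t\to n\cdot Q$.
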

\begin{proof}
Suppose for a contradiction that the point $Q\in C^+(\mathbf{B},\hP_0) \cap \Sigma$ were a limit point of the trajectory $\hP$. 

Let $\Delta t$ be the length of time from Assumption \ref{assume:persistence}. By Prop. \ref{propD3} there exists a direction $n$, a $c>0$, an $r>0$, and a sequence $(t^+_i)_{i=1}^\infty$ of times with $t^+_i > 0$ for all $i\in\N$ and $\lim_{i\to\infty}t^+_i=\infty$, such that for all $i\in\N$ and for all times $\t \in [t^+_i,t^+_i + \Delta t]$ and all points $\hP' \in B_r(Q)$, $n \cdot v_\t(\hP') \geq c$.

Pick $r' < \min\{r, \frac{c \cdot \Delta t}{2}\}$ then there exists a time $t\geq 0$ such that $\hP_\t \in B_{r'}(Q)$ for all $\t > t$ because $Q$ is a limit point. 
Pick $i\in\N$ with $t^+_i > t$ then $\hP_{t^+_i+\Delta t} \not\in B_{r'}(Q)$ because $n \cdot v_\t(\hP') \geq c$ for all $\t \in [t^+_i,t^+_i + \Delta t]$ and all $\hP' \in B_{r'}(Q)\subset B_r(Q)$, a contradiction.
\end{proof}

\subsection{Limit Points Must Lie on the Scene}

The main result is now restated. 

\begin{theorem}
Let $\hP_0\in\Sigma$ and $\hP_0\not\in\text{cl}(\mathbf{B})$, where $\mathbf{B}$ is from Assumption \ref{assume:camera_contained}. Then there exists a point $P \in \Omega$ such that $\lim_{t \ra \infty} \hP_t = P$.
\end{theorem}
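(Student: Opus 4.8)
The plan is to assemble the machinery developed throughout Appendix \ref{sec:proof} into a single chain of implications: establish the existence of an accumulation point of the trajectory $\hP$, upgrade it to the unique limit point, and then eliminate the possibility that this limit lies in the interior $\Sigma$, leaving only $\Omega$. All of the analytic content has already been packaged into the supporting results, so the argument here is essentially bookkeeping by elimination.

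First I would record that the trajectory lives in a bounded set. Since $\hP_0\in\Sigma$ and $\hP_0\not\in\mathbf{B}$ (the latter because $\hP_0\not\in\text{cl}(\mathbf{B})\supset\mathbf{B}$), Prop. \ref{prop:nested_cones} together with Prop. \ref{propC2b} gives $\hP_t\in C_0^+(\mathbf{B},\hP_0)\cap\Sigma$ for all $t\geq 0$, which is the bounded set identified at the close of Section \ref{subsubsec:point_estimates} under Assumption \ref{assume:convexity}. The Bolzano--Weierstrass theorem then furnishes an accumulation point $Q$, and $Q\in\text{cl}(\Sigma)=\Sigma\cup\Omega$ since the whole trajectory lies in $\Sigma$.

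Next I would pin down $Q$. Prop. \ref{propC3} (which needs only $\hP_0\not\in\mathbf{B}$) forces $Q\in C^+(\mathbf{B},\hP_0)$, while Prop. \ref{propC4} (which uses the stronger hypothesis $\hP_0\not\in\text{cl}(\mathbf{B})$ we are given) guarantees that this accumulation point is in fact a limit point, i.e. $\lim_{t\ra\infty}\hP_t=Q$; in particular $Q$ is the unique accumulation point. The final step is then a one-line contradiction: if $Q\in\Sigma$ then $Q\in C^+(\mathbf{B},\hP_0)\cap\Sigma$ would be a limit point of $\hP$, contradicting Lemma \ref{lemD1}. Hence $Q\not\in\Sigma$, and since $Q\in\Sigma\cup\Omega$ we conclude $Q\in\Omega$; setting $P=Q$ completes the proof.

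I do not anticipate a genuine obstacle in this capstone statement: the delicate work -- the cone-geometry lemmas of Section \ref{subsec:cone_geometry}, the invariance of $\Sigma$ in Prop. \ref{propC2b}, and above all the persistence argument of Prop. \ref{propD3} and Lemma \ref{lemD1} that drives estimates out of balls around interior limit points -- is already done. The only thing warranting care is matching hypotheses, namely checking that $\hP_0\not\in\text{cl}(\mathbf{B})$ and $\hP_0\not\in\mathbf{B}$ are each supplied precisely where the cited propositions require them, and that the decomposition $\text{cl}(\Sigma)=\Sigma\cup\Omega$ is invoked correctly so that ruling out $Q\in\Sigma$ leaves exactly $Q\in\Omega$.
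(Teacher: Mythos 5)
Your proposal is correct and follows essentially the same route as the paper's own proof: containment in $C_0^+(\mathbf{B},\hP_0)\cap\Sigma$ via Prop.~\ref{prop:nested_cones}, Bolzano--Weierstrass for an accumulation point, Prop.~\ref{propC4} to upgrade it to a limit point, and the combination of Prop.~\ref{propC3} and Lemma~\ref{lemD1} to force the limit onto $\p\Sigma=\Omega$. Your explicit use of the decomposition $\text{cl}(\Sigma)=\Sigma\cup\Omega$ merely spells out a step the paper leaves implicit.
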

\begin{proof}
By Prop. \ref{prop:nested_cones}, $\hP_t$ is contained within $C_0^+(\mathbf{B},\hP_0) \cap \Sigma$ for all $t\geq 0$. The Bolanzo-Weierstrass theorem implies that the trajectory $\hP$ has an accumulation point $Q$ within the closure of that set. By Prop. \ref{propC4}, $Q$ is a limit point. By Lemma \ref{lemD1}, $Q\not\in C^+(\mathbf{B},\hP_0) \cap \Sigma$ but by Prop. \ref{propC3}, $Q\in C^+(\mathbf{B},\hP_0)$.  Therefore, $\hP_t$ has a limit on $\p \Sigma = \Omega$. 
\end{proof}

The case $\hP_0\in\Sigma^c$ follows along the same lines, replacing positive cones with negative cones where appropriate. The case $\hP_0\in\Omega$ follows trivially from Prop. \ref{propC2b}.

\section*{Acknowledgment}

This research was supported by the Australian Research Council through the ARC Discovery Project DP160100783 ``Sensing a complex world: Infinite dimensional observer theory for robots.''

\bibliography{IEEEabrv,./bibliography}

\end{document}